\numberwithin{equation}{section}
\theoremstyle{plain}
\def\p{\partial}
\def\n{\nabla}
\def\l{\langle}
\def\r{\rangle}
\def\R{\mathbb{R}}
\def\s{\mathbb{S}}
\date{}
\newtheorem{theorem}{Theorem}[section]
\newtheorem{lemma}{Lemma}[section]
\newtheorem{corollary}{Corollary}[section]
\newtheorem{remark}{Remark}[section]
\begin{document}

\title{Long time behavior of a class of non-homogeneous anisotropic fully nonlinear curvature flows}

\author{Weimin Sheng}
\address{Weimin Sheng: School of Mathematical Sciences, Zhejiang University, Hangzhou 310058, China.}
\email{weimins@zju.edu.cn}

\author{Jiazhuo Yang}
\address{Jiazhuo Yang: School of Mathematical Sciences, Zhejiang University, Hangzhou 310058, China.}
\email{yangjiazhuo@zju.edu.cn}

\subjclass[2020]{35K55, 53C21}
\keywords{Non-homogeneous anisotropic curvature flow, Nonlinear parabolic equation, Asymptotic behavior}


\maketitle
\begin{abstract}
    In this paper, we study a class of non-homogeneous anisotropic fully nonlinear curvature flows in $\R^{n+1}$. More precisely, we consider a hypersurface $M$ in $\R^{n+1}$ deformed by a flow along its unit normal with its speed $f(r)\sigma_k^\alpha$ where $\sigma_k$ is the $k$-th elementary symmetric polynomial of $M$'s principle curvatures, $r$ is the distance of the point on $M$ to the origin, $f$ is a smooth nonnegative function on $[0,\infty)$ and $\alpha > 0$. Under some suitable conditions on $f$, we prove that starting from a star-shaped and $k$-convex hypersurface, the flow exists for all time and converges smoothly to a sphere after normalization. In particular, we generalize the results in \cite{li2022asymptotic}.
\end{abstract}

\tableofcontents

\section{Introduction}
 Extrinsic curvature flows have been widely studied in the past few decades and many fascinating results have been produced. The typical examples are mean curvature flow  and Gauss curvature flow. Huisken proved in \cite{huisken1984flow} that along the mean curvature flow, a convex hypersurface in Euclidean space converges to a round point. That is, after appropriate rescaling, the convex hypersurface converges to a sphere. This result was later generalized to a large class of flows whose velocity is a homogeneous of degree one function of the principal curvatures. The examples include \cite{andrews1994contraction, chow1985deforming, chow1987deforming}, etc.. For homogeneous curvature functions with degree not equal to one, the situation is more different and subtle. For Gauss curvature flow, a convex hypersurface in $\R^{n+1}$ evolves by powers of Gauss curvature $K^\alpha$. A series of works \cite{firey1974shapes,andrews1999gauss,andrews2016flow,brendle2017asymptotic,guan2017entropy} have established that the limiting shape is a sphere when $\alpha > \frac{1}{n+2}$, while the limiting shape is an ellipsoid when $\alpha = \frac{1}{n+2}$.

As a natural extension, anisotropic geometric flows have also been widely studied. In particular, flows with speed involving radial distance have extensive applications in the subject of convex geometry. In \cite{li2020flow} Li, Sheng and Wang considered the following flow $X: M \times [0,T) \rightarrow \mathbb{R}^{n+1}$  to study the dual Minkowski problems
\begin{equation}\label{100}
    \begin{cases}
    \frac{\p X}{\p t} = -f(\nu)r^{\alpha}K\nu\\
    X(\cdot,0) = X_0
\end{cases}
\end{equation}
where $X_0: M \rightarrow \R^{n+1}$ is a star-shaped closed hypersurface of $\R^{n+1}$ which includes the origin inside, $K$ is the Gauss curvature of the hypersurface $M_t=X_t(M)$ we consider, $f$ is a given positive smooth functions on $\s^n$, $r$ is the distance from the point of the hypersurface $M_t=X_t(M)$
to the origin and $\nu$ is the unit normal. The dual Minkowski problem was proposed in \cite{huang2016geometric}(see also \cite{lutwak2018lp}), and is equivalent to finding convex bodies $\Omega$ whose boundary $\p \Omega$ satisfies the following equation
\begin{equation}\label{101}
    r^{n+1-q}Kf = u,
\end{equation}
where $u$ is the support function of $\p\Omega$ and $f$ is a given positive smooth functions on $\s^n$. Under some appropriate assumptions on $\alpha$ and $f$, the authors proved that the limiting shape of a convex hypersurface $X_0(M)=M_0$ under the flow \eqref{100} is a solution to the elliptic equation \eqref{101}.

It is more general to consider the following problems 
\begin{equation}\label{102}
    r^{n+1-q}\sigma_kf = u
\end{equation}
where $\sigma_k$ is the $k$-th elementary symmetric polynomial of principle curvatures. Guan,Lin and Ma \cite{guan2009existence} considered this problem when $q = 0$. In \cite{li2020asymptotic} Li, Sheng and Wang introduced the following flow
\begin{equation}\label{104}
    \begin{cases}
        \frac{\p X}{\p t} = -r^\alpha \sigma_k \nu,\\
        X(\cdot,0) = X_0.
    \end{cases}
\end{equation}
The authors proved that a convex hypersurface converges to a round point at infinity along \eqref{104} when $\alpha \geq k+1$. Furthermore, when $f$ is a constant, the equation \eqref{102} characterizes the self-similar solutions to the flow \eqref{104} with $\alpha = n+1-q$.

Denote 
$$\Gamma_k^+ = \{(\kappa_1,\cdots,\kappa_n)\in \R^n| \sigma_1(\kappa) > 0,\cdots,\sigma_k(\kappa) > 0\}.$$
We say a hypersurface $M$ is $k$-convex if $\kappa \in \Gamma_k^+$ everywhere on $M$.

Li et al. \cite{li2022asymptotic} studied the flow
\begin{equation}\label{105}
    \begin{cases}
        \frac{\p X}{\p t} = -r^\frac{\alpha}{\beta} \sigma_k^{\frac{1}{\beta}} \nu,\\
        X(\cdot,0) = X_0,
    \end{cases}
\end{equation}
and generalize the results \cite{li2020asymptotic} to star-shaped and $k$-convex hypersurfaces for some suitable constants $\alpha$ and $\beta$.

In this paper, we consider a class of anisotropic flows when the velocity function is inhomogeneous with respect to $r$. Let $f:[0,\infty) \rightarrow [0,\infty)$ be a smooth function such that $f(0) = 0$ and $f(r) > 0$ for $r > 0$. Given a star-shaped, $k$-convex and closed hypersurface $X_0: M \rightarrow \R^{n+1}$ which includes the origin inside, we consider the flow $X: M \times [0,T) \rightarrow \mathbb{R}^{n+1}$ satisfying 
    \begin{equation}\label{1}
         \begin{cases}
        \frac{ \p X}{ \p t} = -f(r)\sigma^\alpha_k\nu\\
        X(\cdot, 0) = X_0
    \end{cases}
    \end{equation}
where $\sigma_k$ is the $k$-th elementary symmetric polynomial of the principle curvatures, $r$ is the radial distance, $\alpha$ is some positive constant and $\nu$ is the outer normal vector field.

\begin{theorem}\label{thm1}
    Denote $g(r) = f(r) - r^{1+k\alpha}$ satisfying 
   \begin{enumerate}[label=(\roman*)]
       \item $g \geq 0$ and $g \equiv 0$ on $[0,\epsilon]$ for some small $\epsilon > 0$;
       \item $(1+k\alpha)\frac{g(r)}{r} \leq g'(r), \forall r > 0$.
   \end{enumerate} 
Let $X_0$ be a smooth, star-shaped and $k$-convex hypersurface in $\R^{n+1}$ which includes the origin $0$. If $k=1,\alpha > 0$ or $k \geq 2,\alpha \in \{\frac{1}{k}\} \cup [1,\infty)$, then the flow (\ref{1}) has a unique smooth, 
star-shaped and $k$-convex solution $X_t$ for all time $t > 0$, which converges to the origin. After a rescaling $\tilde{X}(\cdot,t) = e^{\gamma t}X(\cdot,t),\gamma = (C^k_n)^\alpha$, the hypersurface $\tilde{M}_t = \tilde{X}_t(M)$ converges exponentially to a sphere centered at the origin in the $C^\infty$ topology.
\end{theorem}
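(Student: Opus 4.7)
The first step is to reduce the geometric flow \eqref{1} to a scalar parabolic PDE on the sphere. Writing $X(\xi,t) = \rho(\xi,t)\xi$ for $\xi\in\s^n$, the star-shapedness gives $\nu = (\rho\xi - \bar{\nabla}\rho)/\sqrt{\rho^2+|\bar{\nabla}\rho|^2}$, so \eqref{1} becomes
\begin{equation*}
\rho_t \;=\; -\,\frac{\sqrt{\rho^2+|\bar{\nabla}\rho|^2}}{\rho}\,f(\rho)\,\sigma_k^{\alpha}(\rho,\bar{\nabla}\rho,\bar{\nabla}^2\rho),
\end{equation*}
which, written in terms of the support function $u$ (or directly in $\rho$), is a fully nonlinear parabolic equation whose linearization lies in $\Gamma_k^+$ as long as the hypersurface is $k$-convex. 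I will work in parallel with the rescaled variable $\tilde{\rho}=e^{\gamma t}\rho$ (with $\gamma=(C_n^k)^\alpha$), for which the ``unperturbed'' piece $r^{1+k\alpha}$ of $f$ exactly cancels the dilation term, leaving a remainder involving only $g$ that decays because condition (i) forces $g\equiv 0$ near $r=0$ while $\rho(\cdot,t)\to 0$.

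The second step is the \emph{a priori} estimates, beginning with $C^0$. At a spatial maximum of $\rho$ one has $\bar{\nabla}\rho=0$ and $\kappa_i\geq 1/\rho_{\max}$, hence $\sigma_k\geq C_n^k/\rho_{\max}^{\,k}$; plugging into the equation gives
\begin{equation*}
\frac{d}{dt}\rho_{\max} \;\leq\; -(C_n^k)^\alpha\rho_{\max} - (C_n^k)^\alpha\,\frac{g(\rho_{\max})}{\rho_{\max}^{k\alpha}}\,,
\end{equation*}
so $\rho_{\max}(t)\leq \rho_{\max}(0)\,e^{-\gamma t}$, i.e.\ $\tilde{\rho}_{\max}$ is non-increasing. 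A symmetric computation at a spatial minimum, using $\kappa_i\leq 1/\rho_{\min}$ and comparing with the ODE $R'=-\gamma R-\gamma g(R)/R^{k\alpha}$, provides a lower bound $\tilde{\rho}_{\min}(t)\geq c_0>0$; here monotonicity of $r\mapsto g(r)/r^{1+k\alpha}$, which is exactly condition (ii), is what lets comparison with a spherical sub-/super-solution work cleanly. Next comes the $C^1$ gradient bound, obtained by applying the maximum principle to $|\bar{\nabla}\log\tilde{\rho}|^2/2$ (or to $|\bar{\nabla}u|^2$ in support-function variables); after the $C^0$ bounds, the flow is uniformly parabolic on the level of the scale-invariant quantity $|\bar{\nabla}\rho|/\rho$. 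The decisive step is the $C^2$/curvature estimate, which I plan to carry out by bounding $\sigma_k^\alpha$ (or equivalently $|\rho_t|/\rho$) from above and the largest principal curvature from above; the upper curvature bound uses an auxiliary function of the form $\log\sigma_k^\alpha - A\log(\tilde{\rho}-a)$ (or a ratio $\kappa_{\max}/(u-a)$ in support-function form) and exploits the concavity of $\sigma_k^{1/k}$ together with the inverse-concavity properties that become available either when $\alpha = 1/k$ or when $\alpha\geq 1$. Preservation of $k$-convexity follows from the lower bound on $\sigma_k^\alpha$ via a tensor maximum principle on the Weingarten operator.

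With all uniform $C^2$ bounds in hand, the equation is uniformly parabolic and either concave or convex (after appropriate reformulation), so Krylov--Safonov plus Schauder estimates promote this to uniform $C^{k,\beta}$ control for every $k$, ruling out finite-time singularities and yielding long-time existence together with full $C^\infty$ bounds on $\tilde{\rho}$. For the asymptotic analysis, I would argue that the oscillation $\tilde{\rho}_{\max}(t)-\tilde{\rho}_{\min}(t)$ tends to zero: since $\rho\to 0$ uniformly, eventually $\rho(\cdot,t)\leq\epsilon$ and condition (i) makes $g(\rho)\equiv 0$, so $\tilde{\rho}$ solves the autonomous rescaled flow whose only stationary solutions in the class of star-shaped $k$-convex hypersurfaces are origin-centered spheres. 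A Lyapunov argument (monotonicity of $\tilde{\rho}_{\max}$ and of $-\tilde{\rho}_{\min}$ on a subsequence, plus compactness in $C^\infty$ from the a priori bounds) identifies the limit as a sphere, and linearization around this sphere—whose linearized operator on $\s^n$ has spectral gap above $0$—upgrades convergence to exponential in $C^\infty$.

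The principal obstacle I anticipate is the $C^2$ estimate, for two reasons. First, the inhomogeneous factor $f(r)$ breaks the dilation-invariance that usually simplifies curvature evolution, and its $r$-derivatives appear with signs that are only controlled by condition (ii). Second, the range $\alpha\in\{1/k\}\cup[1,\infty)$ for $k\geq 2$ in the hypotheses is precisely the regime where $\sigma_k^\alpha$ enjoys either concavity ($\alpha=1/k$) or inverse-concavity ($\alpha\geq 1$) properties that render the auxiliary-function maximum-principle argument feasible; the forbidden interval $(1/k,1)$ is exactly where neither structural property is available, so the test function and the choice of compensating $\tilde{\rho}$-power must be matched to the case at hand. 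Handling both cases uniformly—and extracting from condition (ii) the ``good'' boundary terms that dominate the inhomogeneous contributions—is where the real work lies.
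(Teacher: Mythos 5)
Your overall architecture (reduce to a scalar parabolic PDE on $\s^n$ via the radial function, rescale by $e^{\gamma t}$, run $C^0\to C^1\to C^2$ a priori estimates, then upgrade to $C^\infty$ and identify the limit) is the same as the paper's, and your auxiliary functions for the $C^2$ step are in the right family (the paper uses $\log\Phi-\log(u-a)$ for the upper bound on $\Phi$, then $\log|A|^2-2B\log(\Phi-a)$ when $k=1$ and $H/(\Phi-a)$ when $k\geq 2$). But there are two genuine misattributions that, if carried through, would leave a hole exactly where the hypotheses of the theorem are doing their real work.

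\textbf{Where condition (ii) actually acts.} You attribute $(1+k\alpha)\tfrac{g(r)}{r}\leq g'(r)$ to the $C^0$ comparison and, implicitly, to the $C^2$ estimate. In the paper it is used exactly once, in the $C^1$ gradient estimate. When you differentiate the speed $\big(e^{(\beta-1)\phi}\rho+\tfrac{\rho}{r}\lambda^{\beta}g(\lambda^{-1}r)\big)\sigma_k^\alpha$ along the scalar flow and pair with $\nabla\phi$ at a maximum of $|\nabla\phi|^2$, the term $\nabla(\lambda^{\beta}g(\lambda^{-1}r)/r)$ produces a contribution proportional to
\[
|\nabla\phi|^2\,\sigma_k^\alpha\,\lambda^{\beta-1}\rho\Big((1+k\alpha)\frac{g(\lambda^{-1}r)}{\lambda^{-1}r}-g'(\lambda^{-1}r)\Big),
\]
and it is precisely condition (ii) that forces this to be $\leq 0$; without it the gradient bound fails. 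Your sketch of the $C^1$ step (``maximum principle on $|\bar\nabla\log\tilde\rho|^2/2$'') gives no indication of how this bad $g$-term is absorbed, and that is the one step where the structural hypothesis matters. (The $C^2$ estimate, by contrast, only needs the crude Taylor-type bounds on $\lambda^{\beta}g(\lambda^{-1}r)$, $\lambda^{\beta-1}g'(\lambda^{-1}r)$, $\lambda^{\beta-2}g''(\lambda^{-1}r)$ that follow from $g$ vanishing to high order at $0$, not condition (ii).)

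\textbf{Where condition (i) actually acts, and a scaling slip.} For the $C^0$ lower bound in the $\beta=1+k\alpha$ case, your ODE comparison $R'=-\gamma R-\gamma g(R)/R^{k\alpha}$ feeds the \emph{rescaled} radius into $g$; but in the normalized equation the $g$-term is $\lambda^{\beta}g(\lambda^{-1}\tilde r)$, i.e.\ $g$ sees the \emph{unrescaled} radius $\lambda^{-1}\tilde r$, which goes to $0$. The paper's argument is: once $\lambda^{-1}r_{\min}\leq\epsilon$ the hypothesis $g\equiv 0$ on $[0,\epsilon]$ kills the $g$-term identically, and $\tfrac{d}{dt}r_{\min}\geq 0$, so $r_{\min}$ cannot drop below $\min\{r_{\min}(0),\epsilon\}$. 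Condition (ii) (monotonicity of $g(r)/r^{1+k\alpha}$) alone does not give this: $g(r)=r^{1+k\alpha}$ satisfies it but would make $g(s)/s^{1+k\alpha}\equiv 1$, and the lower bound would fail. So your plan needs condition (i), not (ii), at this step. Finally, your convergence argument (wait until the flow becomes autonomous once $g$ vanishes, then Lyapunov + spectral gap) is a workable but circuitous alternative to the paper's: the same computation that gives the $C^1$ bound gives $\p_t|\nabla\phi|^2\leq -c|\nabla\phi|^2$ directly (since $\dot\sigma_k^{pq}\delta_{pq}$ is bounded below and the $g$-terms have a good sign), so the oscillation of $r$ decays exponentially without ever having to invoke the eventual vanishing of $g$, and then interpolation plus Sobolev embedding gives exponential $C^\infty$ convergence.
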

\begin{remark}
    There is a large class of functions $g$ satisfying our conditions in Theorem \ref{thm1}, for example
    $$g(r) = \begin{cases}
        0,0 \leq r \leq \epsilon,\\
        r^{1+k\alpha}e^{-\frac{1}{(r-\epsilon)^p}}, r > \epsilon
    \end{cases}$$
    where $p > 0$.
\end{remark}
\begin{theorem}\label{thm2}
     Denote $g(r) = f(r) - r^{\beta},\beta > 1 + k\alpha$ satisfying
      \begin{enumerate}[label=(\roman*)]
       \item $g \geq 0$ and $g'(0) = \cdots =g^{([\beta])}(0) = 0$;
       \item $(1+k\alpha)\frac{g(r)}{r} \leq g'(r), \forall r > 0$.
   \end{enumerate} 
Let $X_0$ be a smooth, star-shaped and $k$-convex hypersurface in $\R^{n+1}$ which includes the origin $0$. If $k=1,\alpha > 0$ or $k \geq 2,\alpha \in \{\frac{1}{k}\} \cup [1,\infty)$, then the flow (\ref{1}) has a unique smooth, star-shaped and $k$-convex solution $X_t$ for all time $t > 0$, which converges to the origin. After a rescaling $\tilde{X}(\cdot,t) = (1+(\beta-k\alpha-1)\gamma t)^{\frac{1}{\beta - k\alpha - 1}}X(\cdot,t),\gamma = (C^k_n)^\alpha$, the hypersurface
 $\tilde{M}_t= \tilde{X}_t(M)$ converges exponentially to a sphere centered at the origin in the $C^\infty$ topology.
\end{theorem}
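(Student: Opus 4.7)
The plan is to follow the same scheme as in Theorem \ref{thm1}: parametrize $M_t$ by its radial function $\rho(x,t)$ on $\s^n$, reducing (\ref{1}) to the fully nonlinear parabolic equation
\[
\rho_t = -f(\rho)\,\sigma_k^\alpha\,\sqrt{1+|\n\rho|^2/\rho^2},
\]
then establish in sequence (a) two-sided $C^0$ bounds for $\rho$, (b) a gradient estimate, (c) preservation of $k$-convexity and two-sided curvature bounds, (d) higher regularity and long-time existence, and (e) exponential convergence of the rescaled flow to a sphere. The essential difference from Theorem \ref{thm1} is that the leading speed $r^\beta\sigma_k^\alpha$ is now super-homogeneous of degree $\beta-k\alpha>1$ in $r$, so the self-similar rescaling is polynomial, $\lambda(t)=(1+(\beta-k\alpha-1)\gamma t)^{1/(\beta-k\alpha-1)}$, rather than exponential.

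For (a) the maximum principle at the spatial extrema of $\rho$ yields, using $\kappa_i\le\rho^{-1}$ at $\rho_{\max}$ and $\kappa_i\ge\rho^{-1}$ at $\rho_{\min}$,
\[
\frac{d\rho_{\max}}{dt}\ge -\gamma\,\frac{f(\rho_{\max})}{\rho_{\max}^{k\alpha}},\qquad \frac{d\rho_{\min}}{dt}\le -\gamma\,\frac{f(\rho_{\min})}{\rho_{\min}^{k\alpha}}.
\]
Comparison with the ODE $\phi'=-\gamma f(\phi)/\phi^{k\alpha}$ — whose solutions, since $f(r)\ge r^\beta$ with $\beta-k\alpha>1$, decay like $(1+(\beta-k\alpha-1)\gamma t)^{-1/(\beta-k\alpha-1)}$ — pinches $\lambda(t)\rho(\cdot,t)$ into a fixed compact interval. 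For (b) and (c) the key structural inequality is
\[
rf'(r)-(1+k\alpha)f(r)=(\beta-1-k\alpha)r^\beta+\bigl(rg'(r)-(1+k\alpha)g(r)\bigr)\ge(\beta-1-k\alpha)r^\beta>0,
\]
obtained from $\beta>1+k\alpha$ together with hypothesis (ii). This strict inequality, stronger than the borderline equality $rf'=(1+k\alpha)f$ associated with the pure $r^{1+k\alpha}$ model of Theorem \ref{thm1}, is exactly what is needed to close the maximum-principle argument on the auxiliary quantity $v=\sqrt{1+|\n\rho|^2/\rho^2}$ and on the largest principal curvature, yielding the gradient estimate and two-sided bounds on the Weingarten tensor in exactly the same way as for the homogeneous model. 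Condition (i), which forces $f$ to be $C^{[\beta]}$ at $0$ with vanishing derivatives, provides enough smoothness to run Krylov--Safonov and so obtain (d).

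For (e) I would pass to the rescaled flow $\tilde X(\cdot,t)=\lambda(t)X(\cdot,t)$; spheres centered at the origin are stationary in these coordinates for the leading homogeneous flow $X_t=-r^\beta\sigma_k^\alpha\nu$. The oscillation $\tilde\rho_{\max}(t)-\tilde\rho_{\min}(t)$ should decay exponentially by a pinching argument at the extremal points coupled with the uniform parabolicity from (c), following the scheme used for the homogeneous model in Li et al.\ \cite{li2022asymptotic}; smooth convergence then follows by interpolation.

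The principal obstacle, and the only true novelty with respect to Theorem \ref{thm1}, is the reconciliation of the two different scales inside $f(r)=r^\beta+g(r)$: under the polynomial rescaling $\lambda(t)$ the leading $r^\beta$-term is self-similar, but the correction $g(\rho)\sigma_k^\alpha$ is not, since $g$ is not homogeneous. Because $\rho(t)\to 0$, hypothesis (i) — the vanishing of $g$ to order $[\beta]$ at $0$ — yields $g(\rho)/\rho^\beta=O(\rho^{[\beta]+1-\beta})\to 0$, so in the rescaled picture the $g$-contribution acts as a fast-decaying perturbation of the homogeneous self-similar flow; the work will be to run the pinching mechanism in (a) and (e) with sufficient uniformity that this perturbation is absorbed without degrading the exponential decay rate.
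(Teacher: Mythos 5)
Your proposal is essentially the same scheme as the paper's: normalize by $\lambda(t)$, reduce to a scalar parabolic equation on $\s^n$, establish $C^0$, $C^1$, $C^2$ estimates via the maximum principle, and conclude by interpolation. You correctly identify the two load-bearing hypotheses in the same roles the paper uses them: condition (ii) yields the sign inequality $(1+k\alpha)g(s)/s - g'(s)\le 0$ which closes the gradient estimate, and condition (i) gives $g(r)=O(r^{[\beta]+1})$ with $[\beta]+1>\beta$, so that under the polynomial rescaling the $g$-contribution remains bounded (in fact decays) and the rescaled speed $\lambda^\beta f(\lambda^{-1}r)$ and its first two $\lambda$-weighted derivatives stay uniformly controlled, which is what drives both the lower $C^0$ bound and the curvature estimates.

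One small misattribution: condition (i) is not needed for Krylov--Safonov regularity ($f$ is smooth by hypothesis); its role is precisely the one you describe in your final paragraph, namely to make $\lambda^\beta g(\lambda^{-1}r)$, $\lambda^{\beta-1}g'(\lambda^{-1}r)$, $\lambda^{\beta-2}g''(\lambda^{-1}r)$ uniformly bounded along the normalized flow so that the inhomogeneous part is a genuine perturbation. Uniform parabolicity, hence Krylov--Safonov, then follows from the $C^0$--$C^2$ bounds. Also, in the $C^2$ step the paper does not track the largest principal curvature directly; it uses $Q=\log|A|^2-2B\log(\Phi-a)$ for $k=1$ and $Q=H/(\Phi-a)$ for $k\ge 2$ (exploiting $H^2>|A|^2$ when $\sigma_2>0$), together with a concavity inequality for $\sigma_k^{1/k}$ from Guan--Li--Li. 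These are standard choices, so your sketch is compatible, but you would need one of these specific test functions to actually close the argument.
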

\begin{remark}
    There is a large class of functions $g$ satisfying the conditions in Theorem \ref{thm2}, for example
    $$g(r) = r^{1+k\alpha}e^{-\frac{1}{r^p}},p>0;$$
    or
    $$g(r) = r^{l},l \geq [\beta]+1.$$
\end{remark}
This paper is organized as follows. In Section 2, we give the notions of the normalized flow \eqref{2}, compute the evolution equations of some basic geometric quantities along the normalized flow and reduce the flow to a parabolic equation on $\s^n$. In Section 3, we derive the $C^0$ estimate. As a consequence, we obtain some useful bounds for quantities appear in the evolution equations and show that $k$-convexity is preserved along the flow. In Section 4, we prove $C^1$ estimate and show that the hypersurface preserves star-shaped along the flow. In Section 5, we obtain the $C^2$ estimate along the flow and hence we have the long time existence of the flow. In Section 6, we show the normalized flow converges to a sphere in the $C^\infty$ topology.

Throughout the paper, the constant $C$ is generic and may vary from line to line. Unless otherwise stated, the constant $C$ depends only on $n, k, \alpha, \beta$ and the initial hypersurface $X_0$. 
\section{Preliminaries}
\subsection{The normalized flow}
In this subsection, we introduce the normalized procedures. For convenience, we give a unified notation in the setting of both Theorems \ref{thm1} and \ref{thm2}. Suppose $X: M \times [0,T) \rightarrow \mathbb{R}^{n+1}$ solves (\ref{1}) for $f(r) = r^\beta + g(r)$ where $\beta \geq 1 + k\alpha$ and $g(r)$ is a non-negative smooth function, we define the normalized flow $$\tilde{X}(\cdot,t) = \lambda(t)X(\cdot,t)$$ 
where $\lambda$ is defined by
\begin{equation}\label{0}
    \lambda(t) = \begin{cases}
        e^{\gamma t}, \text{ when}\, \, \beta=1+k\alpha, \\
        (1+(\beta-k\alpha-1)\gamma t)^{\frac{1}{\beta - k\alpha - 1}},\text{ when}\, \, \beta>1+k\alpha, 
    \end{cases}
\end{equation}
and $\gamma = (C_n^k)^\alpha$.

By direct calculations,
$$\frac{\p \tilde{X}}{\p t} =   \lambda(t)\frac{\p X}{\p t}+\lambda'(t)X  = -\lambda(t)f(r)\sigma_k^\alpha\nu+\lambda'(t)X$$
Since $\tilde{\sigma}_k^\alpha = \lambda^{-k\alpha }\sigma_k^\alpha,\tilde{r} = \lambda r,\tilde{\nu} = \nu$, it follows that the normalized flow satisfies
\begin{equation*}
    \frac{\p \tilde{X}}{\p t} = -\lambda^{k\alpha + 1}f(\lambda^{-1}\tilde{r})\tilde{\sigma}_k^\alpha \tilde{\nu}+\frac{\lambda'}{\lambda} \tilde{X} = -(\lambda^{k\alpha+1-\beta}\tilde{r}^{\beta} + \lambda^{k\alpha + 1}g(\lambda^{-1}\tilde{r}))\tilde{\sigma}_k^\alpha\tilde{\nu}+\frac{\lambda'}{\lambda} \tilde{X}.
\end{equation*}
Notice that $\frac{\lambda'}{\lambda} = \frac{\gamma }{1+(\beta-k\alpha-1)\gamma t} = \gamma \lambda^{(1+k\alpha)-\beta}$, we have
\begin{equation*}
\begin{split}
    \frac{\p \tilde{X}}{\p t} = &\frac{1}{1+(\beta-k\alpha-1)\gamma t}(-\lambda^{\beta}f(\lambda^{-1}\tilde{r})\tilde{\sigma}_k^\alpha \tilde{\nu}+\gamma \tilde{X})\\
    = &\frac{1}{1+(\beta-k\alpha-1)\gamma t}(-\tilde{r}^{\beta} + \lambda^{\beta}g(\lambda^{-1}\tilde{r}))\tilde{\sigma}_k^\alpha\tilde{\nu}+\gamma \tilde{X}).
    \end{split}
\end{equation*}
Now we define $\tau$ to be 
\begin{equation*}
    \tau = \begin{cases}
        t, \, \, {\rm{when}}\, \beta = k\alpha + 1,\\
        \frac{\log (1 + (\beta - k\alpha - 1)\gamma t)}{(\beta - k\alpha - 1)\gamma}, \, \, {\rm{when}}\,  \beta > k\alpha + 1.
    \end{cases}
\end{equation*}
Then,
\begin{equation*}
\begin{split}
    \frac{\p \tilde{X}}{\p \tau} = &-\lambda^{\beta}f(\lambda^{-1}\tilde{r})\tilde{\sigma}_k^\alpha \tilde{\nu}+\gamma \tilde{X}\\
    = &-(\tilde{r}^{\beta} + \lambda^{\beta}g(\lambda^{-1}\tilde{r}))\tilde{\sigma}_k^\alpha\tilde{\nu}+\gamma \tilde{X}.
    \end{split}
\end{equation*}
For convenience we still use $t$ instead of $\tau$ to denote the time variable and omit the ``tilde" if no confusions arise. The normalized flow is thus written as 
\begin{equation}\label{2}
     \begin{split}
    \frac{\p X}{\p t} = &-\lambda^{\beta}f(\lambda^{-1}r)\sigma_k^\alpha \nu+\gamma X\\
    = &-(r^{\beta} + \lambda^{\beta}g(\lambda^{-1}r))\sigma_k^\alpha\nu+\gamma X.
    \end{split}
\end{equation}

\subsection{Evolution Equations}
In this subsection, we derive some basic evolution equations for the normalized flow \eqref{2}. Denote $$\Phi = \lambda^{\beta}f(\lambda^{-1}r)\sigma_k^\alpha =(r^{\beta} + \lambda^{\beta}g(\lambda^{-1}r))\sigma_k^\alpha.$$
Then the normalized flow can be written as $$\frac{ \p X}{\p t} = -\Phi \nu + \gamma X.$$
We denote the parabolic operator $L = \p_t - \lambda^\beta f(\lambda^{-1}r)\alpha \sigma_k^{\alpha-1}\dot{\sigma}_k^{ij}\n_j\n_i$.
\begin{lemma}[Evolution of the radial distance]
At the point where $r > 0$ we have
    $$\p_t r = -\frac{\Phi u}{r} + \gamma r.$$
\end{lemma}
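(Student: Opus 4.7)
The plan is essentially a one-line computation based on differentiating $r^2 = \langle X, X\rangle$ in time. I would start from the identity $r^2 = \langle X, X\rangle$, which is valid at points where $r>0$, and differentiate both sides with respect to $t$ to obtain $2r\,\partial_t r = 2\langle X, \partial_t X\rangle$.

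Next, I would substitute the evolution equation for $X$ given by the normalized flow \eqref{2}, namely $\partial_t X = -\Phi\nu + \gamma X$, into the right-hand side. This yields
\begin{equation*}
2r\,\partial_t r = 2\langle X, -\Phi\nu + \gamma X\rangle = -2\Phi\,\langle X,\nu\rangle + 2\gamma |X|^{2}.
\end{equation*}
Recognizing the support function $u = \langle X,\nu\rangle$ and using $|X|^{2} = r^{2}$, dividing by $2r$ immediately produces the claimed formula $\partial_t r = -\Phi u/r + \gamma r$.

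There is really no obstacle here: the statement is a direct algebraic consequence of the flow equation together with the definitions of $r$ and $u$. The only minor point to mention is that the division by $r$ is justified precisely because we are working at points where $r > 0$, which is the hypothesis of the lemma and is in any case guaranteed throughout the flow since the origin is assumed to lie in the interior of $M_t$.
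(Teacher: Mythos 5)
Your proof is correct and is essentially the same as the paper's one-line computation: the paper differentiates $r = |X|$ directly to get $\partial_t r = \langle \partial_t X, X\rangle/r$, whereas you differentiate $r^2 = \langle X, X\rangle$ first and then divide, which is the same calculation up to trivial rearrangement. Both substitute the flow equation and identify $u = \langle X,\nu\rangle$ and $|X|^2 = r^2$.
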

\begin{proof}
    $$\p_t r = \frac{\l \p_t X, X \r}{r} = -\frac{\Phi u}{r} + \gamma r$$
\end{proof}
\begin{lemma}[Evolution of the metric]\label{lem1}
$$\p_t g_{ij} = -2\Phi h_{ij} + 2\gamma g_{ij}$$
    $$\p_t g^{ij} = 2\Phi h^{ij} - 2\gamma g^{ij}$$
\end{lemma}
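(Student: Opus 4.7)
The plan is to differentiate the induced metric $g_{ij}=\langle \partial_i X,\partial_j X\rangle$ in time, then substitute the normalized flow equation $\partial_t X=-\Phi\nu+\gamma X$, and finally exploit the Weingarten relation to rewrite derivatives of $\nu$ in terms of the second fundamental form. Throughout I will commute $\partial_t$ with the spatial derivatives $\partial_i$ freely, which is legitimate because $X$ is smooth in $(x,t)$ on the time interval under consideration.

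First I would write
\[
\partial_t g_{ij} = \langle \partial_i\partial_t X,\partial_j X\rangle + \langle \partial_i X,\partial_j\partial_t X\rangle
\]
and, substituting $\partial_t X=-\Phi\nu+\gamma X$, expand each term using the Leibniz rule. The contribution from $\gamma X$ yields $\gamma\langle\partial_i X,\partial_j X\rangle + \gamma\langle\partial_i X,\partial_j X\rangle = 2\gamma g_{ij}$. The contribution from $-\Phi\nu$ produces two kinds of terms: those with a derivative falling on $\Phi$, which vanish after pairing with $\partial_j X$ because $\nu\perp\partial_j X$; and those with a derivative falling on $\nu$, which by the Weingarten formula $\partial_i\nu = h_i^{\,k}\partial_k X$ give $-\Phi h_i^{\,k}g_{kj}=-\Phi h_{ij}$ on each side. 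Summing the two symmetric contributions delivers $\partial_t g_{ij}=-2\Phi h_{ij}+2\gamma g_{ij}$, which is the first identity.

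For the evolution of $g^{ij}$ I would differentiate the identity $g^{ik}g_{kj}=\delta^i_j$ in time to obtain
\[
\partial_t g^{ij} = -g^{ik}g^{jl}\partial_t g_{kl},
\]
and then substitute the formula just established. Raising both indices on $-2\Phi h_{kl}+2\gamma g_{kl}$ through $g^{ik}g^{jl}$ yields $-2\Phi h^{ij}+2\gamma g^{ij}$, and the overall minus sign produces the stated $\partial_t g^{ij}=2\Phi h^{ij}-2\gamma g^{ij}$.

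There is no genuine obstacle here: the calculation is a direct consequence of the flow equation and the Weingarten relation, with the sign conventions fixed by taking $\nu$ as the outer unit normal so that $\langle\partial_i\partial_j X,\nu\rangle=-h_{ij}$. The only place care is needed is in keeping signs consistent between the Gauss and Weingarten formulas, and in remembering that differentiation of $\Phi$ contributes nothing to the tangential components of $\partial_i(-\Phi\nu)$.
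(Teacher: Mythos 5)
Your proposal is correct and follows essentially the same route as the paper: differentiate the definition $g_{ij}=\langle\partial_i X,\partial_j X\rangle$, substitute the flow equation, use orthogonality of $\nu$ to $\partial_j X$ and the Weingarten relation to collapse the $-\Phi\nu$ contribution to $-2\Phi h_{ij}$, then differentiate $g^{ik}g_{kj}=\delta^i_j$ for the inverse metric. Your sign conventions ($\partial_i\nu=h_i^{\,k}\partial_k X$, $\langle\partial_i\partial_j X,\nu\rangle=-h_{ij}$) are consistent with each other and with the paper's choice $h_{ij}=-\langle\partial_i\partial_j X,\nu\rangle$.
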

\begin{proof}
    Recall $g_{ij} = \l\frac{\p X}{\p x^i},\frac{\p X}{\p x^j}\r$, then
    \begin{equation*}
        \begin{split}
            \p_t g_{ij} = & \l\frac{\p}{\p x^i} \frac{\p X}{\p t},\frac{\p X}{\p x^j}\r + \l\frac{\p X}{\p x^i} ,\frac{\p }{\p x^j}\frac{\p X}{\p t}\r\\
            =&\l\frac{\p}{\p x^i} (-\Phi \nu + \gamma X),\frac{\p X}{\p x^j}\r + \l\frac{\p X}{\p x^i} ,\frac{\p }{\p x^j}(-\Phi \nu + \gamma X)\r\\
            =&-2\Phi h_{ij} + 2\gamma g_{ij}.
        \end{split}
    \end{equation*} 
    Taking derivative over $g_{ik}g^{kj} = \delta_{ij}$, we conclude
    $$\p_tg^{ij} = -g^{ik}g^{jl}\p_tg_{kl}$$
    Hence, 
    $$\p_t g^{ij} = 2\Phi h^{ij} - 2\gamma g^{ij}.$$
\end{proof}
\begin{lemma}[Evolution of the unit normal]
    $$\p_t \nu = \n \Phi$$
\end{lemma}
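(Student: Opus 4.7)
The plan is to use the two defining properties of the outer unit normal $\nu$, namely $\langle \nu, \nu\rangle = 1$ and $\langle \nu, \tfrac{\p X}{\p x^i}\rangle = 0$, and differentiate both identities with respect to $t$. From the first, $\langle \p_t \nu, \nu\rangle = 0$, so $\p_t \nu$ must be tangent to $M_t$; hence I can write $\p_t \nu = a^k \tfrac{\p X}{\p x^k}$ for some coefficients $a^k$, and the whole task reduces to identifying these coefficients.

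To pin down $a^k$, I would differentiate the orthogonality relation $\langle \nu, \tfrac{\p X}{\p x^i}\rangle = 0$ in $t$, swap $\p_t$ and $\p_{x^i}$ on the second factor (valid since $X$ is smooth), and substitute the normalized flow equation $\p_t X = -\Phi \nu + \gamma X$. This gives
\begin{equation*}
\langle \p_t \nu, \tfrac{\p X}{\p x^i}\rangle = -\big\langle \nu,\, \tfrac{\p}{\p x^i}\big(-\Phi\nu + \gamma X\big)\big\rangle = \p_i \Phi,
\end{equation*}
where the last equality uses $\langle \nu, \p_i \nu\rangle = 0$ (from $|\nu|=1$) and $\langle \nu, \p_i X\rangle = 0$. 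Therefore $a^k g_{ki} = \p_i \Phi$, so $a^k = g^{ki}\p_i \Phi$, and $\p_t\nu = g^{ki}(\p_i \Phi)\tfrac{\p X}{\p x^k} = \nabla \Phi$, which is exactly the tangential gradient of $\Phi$ on $M_t$ as intended by the statement.

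This is a routine computation with no real obstacle; the only point requiring some care is bookkeeping conventions. In particular, the $\gamma X$ term contributed by the rescaling drops out entirely in the inner product with $\nu$ because $\langle \nu, \p_i X\rangle = 0$, so the evolution of $\nu$ under the normalized flow \eqref{2} has the same form $\p_t \nu = \nabla \Phi$ as in the unrescaled setting. I would present the argument in two short displayed lines, matching the style of the preceding lemma.
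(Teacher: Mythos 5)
Your proposal is correct and follows essentially the same route as the paper: deduce that $\p_t\nu$ is tangential from $|\nu|\equiv 1$, then compute $\langle \p_t\nu, \p X/\p x^i\rangle$ by differentiating the orthogonality relation and substituting the flow equation $\p_t X = -\Phi\nu + \gamma X$. The only difference is cosmetic — you make the index-raising step $a^k = g^{ki}\p_i\Phi$ explicit, while the paper states the conclusion directly.
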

\begin{proof}
    Since $|\nu| \equiv 1$, it follows $\p_t\nu \perp \nu$. Hence $\p_t\nu$ is tangent to $M_t$.

    Taking derivative with respect to $t$ over $\l \nu,\frac{\p X}{\p x^i} \r \equiv 0$, we have
    $$\l \p_t\nu,\frac{\p X}{\p x^i} \r =- \l \nu,\frac{\p }{\p x^i} \frac{\p X}{\p t} \r = - \l \nu,\frac{\p }{\p x^i} (-\Phi \nu + \gamma X) \r = \frac{\p \Phi}{\p x^i}.$$
    This implies $\p_t\nu = \n \Phi$.
\end{proof}
\begin{lemma}[Evolution of the second fundamental form]\label{lem3}
    \begin{equation}\label{5}
        \p_t h_{ij} = \n_j\n_i \Phi - \Phi h_{i}{ }^kh_{kj} + \gamma h_{ij}
    \end{equation}
    \begin{equation}\label{6}
        \p_t h_i{ }^j = \n^j\n_i \Phi + \Phi h_i{ }^lh_l{ }^j-\gamma h_i{ }^j
    \end{equation}
\end{lemma}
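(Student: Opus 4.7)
The plan is to differentiate the Gauss decomposition $\p_i\p_j X = \Gamma_{ij}^k \p_k X - h_{ij}\nu$ (i.e. $h_{ij} = -\l \p_i\p_j X, \nu\r$ in the ambient Euclidean connection) with respect to $t$, then substitute both the flow equation $\p_t X = -\Phi\nu + \gamma X$ and the previously established identity $\p_t\nu = \n\Phi$. The mixed form \eqref{6} will then be deduced by combining \eqref{5} with the evolution of $g^{ij}$ from Lemma \ref{lem1}. The computation is essentially classical; the only new feature compared with the usual contracting flows is the rescaling term $\gamma X$, and the point of the calculation is to verify that it produces exactly the homothety corrections $+\gamma h_{ij}$ and $-\gamma h_i{}^j$.

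Concretely, I would write
\begin{equation*}
\p_t h_{ij} = -\l \p_i\p_j (\p_t X), \nu\r - \l \p_i\p_j X, \p_t \nu\r .
\end{equation*}
For the second term, the Gauss formula and the tangentiality of $\p_t\nu = \n\Phi$ give $-\l \p_i\p_j X, \n\Phi\r = -\Gamma_{ij}^k \n_k\Phi$. For the first term, substitute $\p_t X = -\Phi\nu + \gamma X$ and differentiate term-by-term. The contribution from $-\Phi\nu$ is handled by expanding $\p_j\p_i(-\Phi\nu)$, projecting onto $\nu$, and using $\l \p_i\nu, \nu\r = 0$ together with Weingarten $\p_i\nu = h_i{}^k \p_k X$ (which yields $\l \p_j\p_i\nu,\nu\r = -\l \p_i\nu,\p_j\nu\r = -h_{ik}h_j{}^k$); this produces $\p_j\p_i\Phi - \Phi h_{ik}h_j{}^k$. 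The contribution from $\gamma X$ is much simpler: $\l \p_i\p_j(\gamma X), \nu\r = \gamma \l \p_i\p_j X, \nu\r = -\gamma h_{ij}$, so with the outer minus sign it contributes $+\gamma h_{ij}$. Combining these pieces and recognizing $\p_j\p_i\Phi - \Gamma_{ij}^k\n_k\Phi = \n_j\n_i\Phi$ yields \eqref{5}.

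For \eqref{6}, I would apply the product rule to $h_i{}^j = g^{jk}h_{ki}$, using $\p_t g^{jk} = 2\Phi h^{jk} - 2\gamma g^{jk}$ from Lemma \ref{lem1} and the formula \eqref{5} just proved. The two Weingarten-type terms combine, via $h^{jk}h_{ki} = h_i{}^l h_l{}^j$, as $2\Phi h_i{}^l h_l{}^j - \Phi h_i{}^l h_l{}^j = +\Phi h_i{}^l h_l{}^j$, and the two homothetic terms combine as $-2\gamma h_i{}^j + \gamma h_i{}^j = -\gamma h_i{}^j$, giving exactly \eqref{6}. The only real obstacle is bookkeeping: one must keep the sign conventions consistent (in particular $h_{ij} = -\l \p_i\p_j X, \nu\r$ for the outer normal, so that $h_{ij}>0$ on convex surfaces) and track the signs of the $\gamma$-terms correctly when raising an index, since $g^{ij}$ and $h_{ij}$ each pick up one but of opposite sign.
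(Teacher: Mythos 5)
Your proposal is correct and follows essentially the same route as the paper: differentiate $h_{ij}=-\langle \p_i\p_j X,\nu\rangle$ in $t$, substitute the flow equation and $\p_t\nu=\n\Phi$, use Weingarten to evaluate $\langle\p_i\p_j\nu,\nu\rangle=-h_{ik}h_j{}^k$, and then raise an index with Lemma \ref{lem1}. The only cosmetic difference is that the paper works at the center of normal coordinates (so that $-\langle\p_i\p_j X,\p_t\nu\rangle$ vanishes outright and $\p_i\p_j\Phi=\n_i\n_j\Phi$), whereas you keep the Christoffel symbols explicit and observe that $-\Gamma_{ij}^k\n_k\Phi$ combines with $\p_i\p_j\Phi$ to give the covariant Hessian; both yield the same identity.
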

\begin{proof}
    Recall $h_{ij} = -\l \frac{\p^2 X}{\p x^i\p x^j},\nu \r$. Choose normal coordinates at $x \in M$ with respect to the metric $g(t)$.

   Taking the time derivative of $h_{ij}$, we obtain 
    \begin{equation*}
        \begin{split}
            \p_th_{ij} =& -\p_t\l \frac{\p^2 X}{\p x^i\p x^j},\nu \r
            =-\l\frac{\p^2 }{\p x^i\p x^j}\frac{\p X}{\p t},\nu \r - \l \frac{\p^2 X}{\p x^i\p x^j},\p_t\nu \r\\
            =&-\l\frac{\p^2 }{\p x^i\p x^j} (-\Phi \nu + \gamma X),\nu \r \\
            =&\n_i\n_j \Phi + \Phi\l \frac{\p^2 \nu}{\p x^i \p x^j},\nu \r + \gamma h_{ij}\\
            =& \n_i\n_j \Phi - \Phi h_{i}{ }^kh_{kj} + \gamma h_{ij}
        \end{split}
    \end{equation*}
    where we used the following facts at the center of normal coordinates:
    $$\frac{\p^2 \Phi }{\p x^i\p x^j} = \n_i\n_j \Phi,\frac{\p^2 \nu}{\p x^i\p x^j} = \n_ih_j{ }^k\frac{\p X}{\p x^k} - h_j{ }^kh_{ik}\nu,\frac{\p^2 X }{\p x^i\p x^j} = -h_{ij}\nu.$$
    Using Lemma \ref{lem1}, we have
    \begin{equation*}
        \begin{split}
            \p_t h_i{ }^j = &\p_t(h_{ik}g^{kj}) = \p_th_{ik}g^{kj}+h_{ik}\p_tg^{kj}\\
            =&\n^j\n_i \Phi - \Phi h_{i}{ }^kh_{k}{ }^j + \gamma h_{i}{ }^{j} + h_{ik}(2\Phi h^{kj} - 2\gamma g^{kj})\\
            =&\n^j\n_i \Phi + \Phi h_{i}{ }^kh_{k}{ }^j - \gamma h_{i}{ }^{j}
        \end{split}
    \end{equation*}
\end{proof}
\begin{corollary}[Evolution of $\sigma_k^\alpha$]
    $$\p_t(\sigma_k^\alpha) = \alpha \sigma^{\alpha-1}_k \dot{\sigma}_k^{ij} \n_j\n_i \Phi + \alpha \sigma_k^{\alpha-1}\Phi \dot{\sigma}_k^{ij}h_{i}{ }^kh_{kj} - k\alpha \gamma \sigma_k^\alpha$$
\end{corollary}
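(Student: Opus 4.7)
The plan is to derive this as a direct consequence of Lemma \ref{lem3}, specifically the mixed-index evolution \eqref{6}, combined with the chain rule and the homogeneity of $\sigma_k$. First I would write
\begin{equation*}
\p_t(\sigma_k^\alpha) = \alpha \sigma_k^{\alpha-1}\, \p_t \sigma_k,
\end{equation*}
and view $\sigma_k$ as a smooth symmetric function of the Weingarten operator $h_i{}^j$. It is important to use the mixed-tensor form rather than $h_{ij}$ here, because $\p_t g^{ij}\ne 0$ along the flow (by Lemma \ref{lem1}) and working with $h_i{}^j$ keeps all metric contributions bundled into the evolution formula already derived.

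Next I would apply the chain rule to obtain
\begin{equation*}
\p_t \sigma_k = \dot\sigma_k^{ij}\, \p_t h_i{}^j,
\end{equation*}
and then substitute \eqref{6} from Lemma \ref{lem3} term by term. The first piece $\dot\sigma_k^{ij}\n^j\n_i\Phi$ reproduces the leading elliptic term $\alpha \sigma_k^{\alpha-1}\dot\sigma_k^{ij}\n_j\n_i\Phi$ after using the symmetry of the matrix of derivatives $\dot\sigma_k^{ij}$ (so that $\dot\sigma_k^{ij}\n^j\n_i\Phi=\dot\sigma_k^{ij}\n_j\n_i\Phi$ once one raises/lowers consistently). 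The second piece gives $\alpha\sigma_k^{\alpha-1}\Phi\,\dot\sigma_k^{ij}h_i{}^k h_{kj}$, matching the curvature quadratic term in the claim.

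For the remaining piece $-\gamma\,\alpha\sigma_k^{\alpha-1}\dot\sigma_k^{ij}h_i{}^j$, I would invoke Euler's identity for homogeneous functions: since $\sigma_k$ is homogeneous of degree $k$ in the entries of $h_i{}^j$, we have $\dot\sigma_k^{ij}h_i{}^j = k\sigma_k$. This collapses the last contribution to $-k\alpha\gamma\,\sigma_k^\alpha$, yielding exactly the claimed formula. The computation is routine; the only mild care is in bookkeeping index positions (mixed versus contravariant) so that Euler's identity and the symmetry of $\dot\sigma_k^{ij}$ apply cleanly, and this is where I would focus the verification.
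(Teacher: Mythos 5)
Your proposal is correct and matches the paper's proof essentially verbatim: both differentiate $\sigma_k^\alpha$ via the chain rule applied to $\sigma_k(h_i{}^j)$, substitute the mixed-index evolution equation \eqref{6}, and collapse the last term using the homogeneity identity $\dot{\sigma}_k^{ij}h_i{}^j = k\sigma_k$. Your remark that one should work with $h_i{}^j$ rather than $h_{ij}$ is exactly the point the paper's choice of index placement implicitly encodes.
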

\begin{proof}
By direct calculations, 
    \begin{equation*}
        \begin{split}
            \p_t(\sigma_k^\alpha) = & \alpha \sigma_k^{\alpha-1}\frac{\p \sigma_k}{\p h_i{ }^j}\p_th_i{ }^j\\
            =&\alpha \sigma_k^{\alpha-1}\frac{\p \sigma_k}{\p h_i{ }^j}(\n^j\n_i \Phi + \Phi h_{i}{ }^kh_{k}{ }^j - \gamma h_{i}{ }^{j})\\
            =&\alpha \sigma^{\alpha-1}_k \dot{\sigma}_k^{ij} \n_j\n_i \Phi + \alpha \sigma_k^{\alpha-1}\Phi \dot{\sigma}_k^{ij}h_{i}{ }^kh_{kj} - k\alpha \gamma \sigma_k^\alpha
        \end{split}
    \end{equation*}
    where we used the fact $\frac{\p \sigma_k}{\p h_i{ }^j}h_{i}{ }^{j} = k \sigma_k$.
\end{proof}

\begin{corollary}[Evolution of $\Phi$]\label{cor1}
\begin{equation*}
    \begin{split}
        L\Phi  =& \lambda^\beta f(\lambda^{-1}r)\alpha \sigma_k^{\alpha-1}\Phi \dot{\sigma}_k^{ij}h_{i}{ }^kh_{kj}
         + (\beta-k\alpha) \gamma \Phi- \lambda^{\beta-1}\frac{\Phi u}{r}f'(\lambda^{-1}r)\sigma_k^\alpha\\
        =& \lambda^\beta f(\lambda^{-1}r)\alpha \sigma_k^{\alpha-1}\Phi \dot{\sigma}_k^{ij}h_{i}{ }^kh_{kj}
         + (\beta-k\alpha) \gamma \Phi-\frac{\Phi^2 u}{r}\frac{f'(\lambda^{-1}r)}{\lambda f(\lambda^{-1}r)}\\
    \end{split}
\end{equation*}
\end{corollary}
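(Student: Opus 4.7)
The plan is a direct chain-rule computation, combining the evolution equations for $r$ and for $\sigma_k^\alpha$ that have already been established, together with the fact that in the rescaled time variable $\lambda$ satisfies the simple ODE $\lambda'=\gamma\lambda$. More precisely, from the definition of $\tau$ one checks that $1+(\beta-k\alpha-1)\gamma t = e^{(\beta-k\alpha-1)\gamma\tau}$ in the non-critical case, so after renaming $\tau$ to $t$ we always have $\lambda(t)=e^{\gamma t}$ and hence $\lambda' = \gamma\lambda$. This is the only input that actually uses the rescaling formulas (\ref{0}) and is worth recording explicitly before the computation.

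Starting from $\Phi = \lambda^\beta f(\lambda^{-1}r)\sigma_k^\alpha$, I would differentiate in $t$ and split the result into three pieces corresponding to the three factors:
\begin{equation*}
\p_t\Phi = \beta\lambda^{\beta-1}\lambda' f(\lambda^{-1}r)\sigma_k^\alpha
+\lambda^\beta f'(\lambda^{-1}r)\bigl(\lambda^{-1}\p_t r-\lambda^{-2}\lambda' r\bigr)\sigma_k^\alpha
+\lambda^\beta f(\lambda^{-1}r)\p_t(\sigma_k^\alpha).
\end{equation*}
Substituting $\lambda'=\gamma\lambda$ makes the first term equal to $\beta\gamma\Phi$. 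In the second term, plugging in the evolution $\p_t r = -\Phi u/r+\gamma r$ from the radial-distance lemma, the $\gamma r$ piece cancels exactly with the contribution $-\lambda^{-2}\lambda' r = -\gamma r/\lambda$, leaving only $-\lambda^{\beta-1}(\Phi u/r)f'(\lambda^{-1}r)\sigma_k^\alpha$. This cancellation is the one step that deserves to be highlighted; everything else is mechanical.

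For the third term, I would invoke the corollary on the evolution of $\sigma_k^\alpha$ directly. Its diffusion piece $\alpha\sigma_k^{\alpha-1}\dot\sigma_k^{ij}\n_j\n_i\Phi$ combines with the $\lambda^\beta f(\lambda^{-1}r)$ prefactor to produce exactly the second-order operator inside $L$, so subtracting it yields the parabolic operator $L\Phi$. The curvature-squared piece contributes $\lambda^\beta f(\lambda^{-1}r)\alpha\sigma_k^{\alpha-1}\Phi\dot\sigma_k^{ij}h_i{}^k h_{kj}$, while the $-k\alpha\gamma\sigma_k^\alpha$ piece combines with the $\beta\gamma\Phi$ from the first term to produce $(\beta-k\alpha)\gamma\Phi$. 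Collecting gives the first displayed identity of the corollary.

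The second displayed line is then just a cosmetic rewrite: using $\Phi = \lambda^\beta f(\lambda^{-1}r)\sigma_k^\alpha$, one has $\lambda^{\beta-1}f'(\lambda^{-1}r)\sigma_k^\alpha = \Phi\,f'(\lambda^{-1}r)/(\lambda f(\lambda^{-1}r))$, which turns the last term into $(\Phi^2 u/r)\cdot f'(\lambda^{-1}r)/(\lambda f(\lambda^{-1}r))$. There is no analytic obstacle here; the only potential pitfall is bookkeeping the factors of $\lambda$, $\lambda^{-1}$ inside $f'$, and keeping straight that $\p_t$ acts on both the explicit $\lambda^\beta$ and the implicit $\lambda^{-1}r$ inside the argument of $f$—hence the careful step-by-step split above.
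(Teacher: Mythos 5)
Your proposal is correct and follows essentially the same route as the paper: a three-term product differentiation of $\Phi=\lambda^\beta f(\lambda^{-1}r)\sigma_k^\alpha$, substitution of $\lambda'=\gamma\lambda$, the evolution of $r$, and the evolution of $\sigma_k^\alpha$, with the $\gamma r$ cancellation and the regrouping $\beta\gamma-k\alpha\gamma=(\beta-k\alpha)\gamma$ as the only noteworthy steps. The one cosmetic difference is that you justify $\lambda'=\gamma\lambda$ by observing $\lambda=e^{\gamma\tau}$ after reparametrization, while the paper computes $\frac{d\lambda}{d\tau}=\frac{d\lambda}{dt}\frac{dt}{d\tau}=\gamma\lambda$ directly; both are fine.
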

\begin{proof}
Recall 
$$\frac{d \lambda}{d\tau} = \frac{d \lambda}{dt}\frac{dt}{d\tau} = \frac{\gamma}{1+(\beta-k\alpha-1)\gamma t}\lambda  (1+(\beta-k\alpha-1)\gamma t) = \gamma \lambda.$$
Therefore,
\begin{equation*}
    \begin{split}
        \p_t\Phi =& \beta \lambda^{\beta-1} \lambda' f(\lambda^{-1}r)\sigma_k^\alpha + \lambda^\beta f'(\lambda^{-1}r)(-\frac{\lambda'}{\lambda^2}r + \lambda^{-1}\p_t r)\sigma_k^\alpha\\
        &+\lambda^\beta f(\lambda^{-1}r)\p_t(\sigma_k^\alpha)\\
        =&\gamma \beta \lambda^\beta f(\lambda^{-1}r)\sigma_k^\alpha + \lambda^\beta f'(\lambda^{-1}r)[-\frac{\gamma}{\lambda}r + \lambda^{-1}(-\frac{\Phi u}{r} + \gamma r)]\sigma_k^\alpha\\
         &+\lambda^\beta f(\lambda^{-1}r)\p_t(\sigma_k^\alpha)\\
         =&\gamma \beta \lambda^\beta f(\lambda^{-1}r)\sigma_k^\alpha - \lambda^{\beta-1}\frac{\Phi u}{r}f'(\lambda^{-1}r)\sigma_k^\alpha\\
         &+\lambda^\beta f(\lambda^{-1}r)(\alpha \sigma^{\alpha-1}_k \dot{\sigma}_k^{ij} \n_j\n_i \Phi + \alpha \sigma_k^{\alpha-1}\Phi \dot{\sigma}_k^{ij}h_{i}{ }^kh_{kj} - k\alpha \gamma \sigma_k^\alpha)\\
         =&\lambda^\beta f(\lambda^{-1}r)\alpha \sigma^{\alpha-1}_k \dot{\sigma}_k^{ij} \n_j\n_i \Phi + \lambda^\beta f(\lambda^{-1}r)\alpha \sigma_k^{\alpha-1}\Phi \dot{\sigma}_k^{ij}h_{i}{ }^kh_{kj}\\
         &+ (\beta-k\alpha) \gamma \Phi- \lambda^{\beta-1}\frac{\Phi u}{r}f'(\lambda^{-1}r)\sigma_k^\alpha
    \end{split}
\end{equation*}
where we used the evolution equations of $r$ and $\sigma_k^\alpha$.
\end{proof}
\begin{lemma}[Evolution of the support function]
   \begin{equation}\label{19}
       \begin{split}
          L u = &  -(1+k\alpha)\Phi + \gamma u +\lambda^{\beta-1}f'(\lambda^{-1}r)\sigma_k^\alpha\l X, \n r \r\\
          & +\lambda^\beta f(\lambda^{-1}r)\alpha\sigma_k^{\alpha-1}\dot{\sigma}_k^{ij}h_i{ }^lh_{lj}u
       \end{split}
   \end{equation} 
\end{lemma}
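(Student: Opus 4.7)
The plan is to compute $\p_t u$ and $\dot{\sigma}_k^{ij}\n_j\n_i u$ directly from the definition $u = \l X,\nu\r$, and then reconcile the two using the chain-rule expansion of $\n\Phi$ together with the Codazzi identity.

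For the time derivative, substituting $\p_t X = -\Phi\nu + \gamma X$ and the already-derived $\p_t\nu = \n\Phi$ (tangential since $|\nu|\equiv 1$) into $u = \l X,\nu\r$ yields
$$\p_t u = \l -\Phi\nu + \gamma X,\nu\r + \l X,\n\Phi\r = -\Phi + \gamma u + \l X,\n\Phi\r.$$

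For the spatial Hessian I would work in normal coordinates at a fixed point and combine the Weingarten equation $\n_i\nu = h_i{}^k\tfrac{\p X}{\p x^k}$ with $\n_j\tfrac{\p X}{\p x^i} = -h_{ij}\nu$. Two successive covariant derivatives of $u = \l X,\nu\r$ produce
$$\n_i u = h_i{}^k\l X,\tfrac{\p X}{\p x^k}\r,\qquad \n_j\n_i u = \n_j h_i{}^k\l X,\tfrac{\p X}{\p x^k}\r + h_{ij} - h_i{}^k h_{jk} u.$$
Contracting with $\lambda^\beta f(\lambda^{-1}r)\alpha\sigma_k^{\alpha-1}\dot{\sigma}_k^{ij}$ and using Euler's identity $\dot{\sigma}_k^{ij}h_{ij} = k\sigma_k$, the trace piece becomes $k\alpha\Phi$; together with the sign in $L$ this generates the contributions $+\lambda^\beta f(\lambda^{-1}r)\alpha\sigma_k^{\alpha-1}\dot{\sigma}_k^{ij}h_i{}^k h_{jk}u - k\alpha\Phi$ plus a leftover gradient term $-\lambda^\beta f(\lambda^{-1}r)\alpha\sigma_k^{\alpha-1}\dot{\sigma}_k^{ij}\n_j h_i{}^k\l X,\tfrac{\p X}{\p x^k}\r$ that must be absorbed by $\l X,\n\Phi\r$.

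To effect the absorption, apply the chain rule to $\Phi = \lambda^\beta f(\lambda^{-1}r)\sigma_k^\alpha$, obtaining
$$\n_l\Phi = \lambda^{\beta-1}f'(\lambda^{-1}r)\sigma_k^\alpha\,\n_l r + \lambda^\beta f(\lambda^{-1}r)\alpha\sigma_k^{\alpha-1}\dot{\sigma}_k^{ij}\n_l h_i{}^j.$$
Taking the tangential inner product with $X$ and invoking the Codazzi identity $\n_l h_i{}^j = \n_i h_l{}^j$, the $\dot\sigma$-weighted piece matches the leftover gradient term produced above, so the two cancel in $Lu = \p_t u - \lambda^\beta f(\lambda^{-1}r)\alpha\sigma_k^{\alpha-1}\dot{\sigma}_k^{ij}\n_j\n_i u$. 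Reindexing $h_i{}^k h_{jk} = h_i{}^l h_{lj}$ then delivers the claimed formula. The main obstacle is the Codazzi-based index bookkeeping in that final matching: one must verify $\dot{\sigma}_k^{ij}g^{ml}\n_l h_i{}^j\l X,\tfrac{\p X}{\p x^m}\r = \dot{\sigma}_k^{ij}\n_j h_i{}^m\l X,\tfrac{\p X}{\p x^m}\r$, which becomes transparent after diagonalizing $h_i{}^j$ at the chosen point, since both sides collapse to $\sum_i \tfrac{\p\sigma_k}{\p\kappa_i}\n_i h_{im}\l X,\tfrac{\p X}{\p x^m}\r$.
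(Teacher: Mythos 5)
Your proposal is correct and follows essentially the same route as the paper: compute $\p_t u$ via $\p_t\nu=\n\Phi$, use the standard Hessian formula for $u=\l X,\nu\r$, contract against $\dot{\sigma}_k^{ij}$, and cancel the $\l X,\n\Phi\r$ term against the $\dot{\sigma}$-weighted gradient piece using the chain rule for $\n\Phi$ and the Codazzi identity. The only cosmetic difference is that the paper folds Codazzi into the recalled identity $\n_j\n_i u = h_{ij} + \l X,\n_l h_{ij}\tfrac{\p X}{\p x^l}\r - h_i{}^l h_{lj}u$ at the outset, whereas you defer it to the final matching step.
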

\begin{proof}
By direct calculations,
    \begin{equation}\label{9}
        \p_tu = \p_t\l X,\nu \r = \l \p_tX,\nu\r + \l X,\p_t \nu \r = -\Phi + \gamma u + \l X,\n \Phi \r
    \end{equation}
Recall
   $$\n_j\n_i u = h_{ij} + \l X,\n_lh_{ij} \frac{\p X}{\p x^l}\r -h_i{ }^lh_{lj}u.$$
    Hence,
    \begin{equation*}
        \begin{split}
           &\lambda^\beta f(\lambda^{-1}r)\alpha\sigma_k^{\alpha-1}\dot{\sigma}_k^{ij}\n_j\n_i u\\
           =&   \lambda^\beta f(\lambda^{-1}r)\alpha\sigma_k^{\alpha-1}\dot{\sigma}_k^{ij}h_{ij}
            +  \lambda^\beta f(\lambda^{-1}r)\alpha\sigma_k^{\alpha-1}\dot{\sigma}_k^{ij}\l X,\n_lh_{ij} \frac{\p X}{\p x^l}\r\\
            &-\lambda^\beta f(\lambda^{-1}r)\alpha\sigma_k^{\alpha-1}\dot{\sigma}_k^{ij}h_i{ }^lh_{lj}u\\
            =&k\alpha\Phi
            +  \lambda^\beta f(\lambda^{-1}r)\l X,\n \sigma_k^\alpha\r
            -\lambda^\beta f(\lambda^{-1}r)\alpha\sigma_k^{\alpha-1}\dot{\sigma}_k^{ij}h_i{ }^lh_{lj}u\\
        \end{split}
    \end{equation*}
Recall $\n \Phi = \lambda^{\beta-1}f'(\lambda^{-1}r)\n r \sigma_k^\alpha + \lambda^\beta f(\lambda^{-1}r)\n \sigma_k^\alpha$, it follows
\begin{equation*}
    \begin{split}
         \lambda^\beta f(\lambda^{-1}r)\alpha\sigma_k^{\alpha-1}\dot{\sigma}_k^{ij}\n_j\n_i u
         =&k\alpha\Phi
            +  \l X,\n \Phi \r - \lambda^{\beta-1}f'(\lambda^{-1}r)\sigma_k^\alpha\l X, \n r \r \\
            &-\lambda^\beta f(\lambda^{-1}r)\alpha\sigma_k^{\alpha-1}\dot{\sigma}_k^{ij}h_i{ }^lh_{lj}u\\
    \end{split}
\end{equation*}
Combining the above equation with \eqref{9}, we derive the desired equation.
\end{proof}
At the end of this section, we derive another useful form of the evolution equation of the second fundamental form.
\begin{lemma}[Evolution of the second fundamental form]
  \begin{equation*}
        \begin{split}
            L h_{ij} = &   \lambda^{\beta-2} f''(\lambda^{-1}r)\n_jr\n_ir \sigma_k^\alpha \\
        &+ \lambda^{\beta-1} f'(\lambda^{-1}r)\n_ir \n_j\sigma_k^\alpha+\lambda^{\beta-1} f'(\lambda^{-1}r)\n_jr \n_i\sigma_k^\alpha \\
        &+  \lambda^\beta f(\lambda^{-1}r) \alpha(\alpha - 1)\sigma_k^{\alpha - 2} \dot{\sigma}_k^{st}\n_jh_{st}\dot{\sigma}_k^{pq}\n_ih_{pq} \\
        &+\lambda^\beta f(\lambda^{-1}r) \alpha \sigma_k^{\alpha - 1}\ddot{\sigma}_k^{pq,st}\n_ih_{pq}\n_jh_{st} \\
    & +  \lambda^\beta f(\lambda^{-1}r) \alpha \sigma_k^{\alpha-1}\dot{\sigma}_k^{pq}h_i{ }^sh_{sp}h_{jq}+\lambda^{\beta-1}f'(\lambda^{-1}r)\n_j\n_ir \sigma_k^\alpha\\
    &-  \lambda^\beta f(\lambda^{-1}r) k\alpha \sigma_k^{\alpha}\dot{\sigma}_k^{pq}h_i{ }^sh_{sj} +  \lambda^\beta f(\lambda^{-1}r) \alpha \sigma_k^{\alpha-1}\dot{\sigma}_k^{pq}h_{p}{ }^sh_{sq}h_{ij}\\
    &-  \lambda^\beta f(\lambda^{-1}r) \alpha \sigma_k^{\alpha-1}\dot{\sigma}_k^{pq}h_p{ }^sh_{sj}h_{qi}- \Phi h_{i}{ }^kh_{kj} + \gamma h_{ij}
        \end{split}
    \end{equation*}
         \begin{equation}\label{18}
         \begin{split}
            Lh_i{ }^j = & \lambda^\beta f(\lambda^{-1}r) \alpha(\alpha - 1)\sigma_k^{\alpha - 2} \n^j\sigma_k\n_i\sigma_k \\
        &+\lambda^\beta f(\lambda^{-1}r) \alpha \sigma_k^{\alpha - 1}\ddot{\sigma}_k^{pq,st}\n_ih_{pq}\n^jh_{st}-\gamma h_i{ }^j \\
         &- (k\alpha-1) \Phi h_i{ }^sh_{s}{ }^{j} +  \lambda^\beta f(\lambda^{-1}r) \alpha \sigma_k^{\alpha-1}\dot{\sigma}_k^{pq}h_{p}{ }^sh_{sq}h_{i}{ }^{j}\\
         &+\lambda^{\beta-1} f'(\lambda^{-1}r)\n_ir \n^j\sigma_k^\alpha+\lambda^{\beta-1} f'(\lambda^{-1}r)\n^jr \n_i\sigma_k^\alpha \\
        & +\lambda^{\beta-1} f'(\lambda^{-1}r)\n^j\n_ir \sigma_k^\alpha+ \lambda^{\beta-2} f''(\lambda^{-1}r)\n^jr\n_ir \sigma_k^\alpha
         \end{split}
     \end{equation}
\end{lemma}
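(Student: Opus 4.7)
The proof is a direct calculation starting from the formula
\[
\partial_t h_{ij} = \nabla_j\nabla_i \Phi - \Phi\, h_i{}^k h_{kj} + \gamma h_{ij}
\]
already proved in Lemma \ref{lem3}. The plan is to expand the Hessian of $\Phi = \lambda^\beta f(\lambda^{-1}r)\sigma_k^\alpha$ by the product and chain rules, then commute the third derivatives of $h$ using Codazzi together with the Ricci identity and Gauss equation, and finally subtract the elliptic part of $L$ so that the genuine fourth-order term in $h$ cancels against the diffusion operator.

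Applying $\nabla_j\nabla_i$ to $\Phi$, I would first obtain five groups of terms: an $f''$ contribution $\lambda^{\beta-2} f''(\lambda^{-1}r)\nabla_ir\nabla_jr\,\sigma_k^\alpha$; a Hessian-of-$r$ contribution $\lambda^{\beta-1}f'(\lambda^{-1}r)\nabla_j\nabla_ir\,\sigma_k^\alpha$; two symmetric mixed terms $\lambda^{\beta-1}f'(\lambda^{-1}r)\bigl(\nabla_ir\nabla_j\sigma_k^\alpha+\nabla_jr\nabla_i\sigma_k^\alpha\bigr)$; and the principal term $\lambda^\beta f(\lambda^{-1}r)\nabla_j\nabla_i\sigma_k^\alpha$. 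Expanding the last by the chain rule produces the quadratic-gradient contribution $\alpha(\alpha-1)\sigma_k^{\alpha-2}\nabla_i\sigma_k\nabla_j\sigma_k=\alpha(\alpha-1)\sigma_k^{\alpha-2}\dot\sigma_k^{st}\nabla_jh_{st}\dot\sigma_k^{pq}\nabla_ih_{pq}$, the second-order concavity term $\alpha\sigma_k^{\alpha-1}\ddot\sigma_k^{pq,st}\nabla_ih_{pq}\nabla_jh_{st}$, and the true second-derivative term $\alpha\sigma_k^{\alpha-1}\dot\sigma_k^{pq}\nabla_j\nabla_ih_{pq}$.

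The crucial algebraic step is to commute $\dot\sigma_k^{pq}\nabla_j\nabla_ih_{pq}$ into $\dot\sigma_k^{pq}\nabla_p\nabla_qh_{ij}$ so that it matches the elliptic part of $L$. Using Codazzi in the Euclidean ambient ($\nabla h$ is totally symmetric) one writes $\nabla_j\nabla_ih_{pq}=\nabla_j\nabla_ph_{iq}$, and then applies the Ricci identity to commute $\nabla_j$ past $\nabla_p$. The intrinsic Riemann tensor that appears is converted into a cubic polynomial in $h$ via the Gauss equation $R_{ijkl}=h_{ik}h_{jl}-h_{il}h_{jk}$. Contracting with $\dot\sigma_k^{pq}$, using its symmetry in $p,q$ to identify repeated terms, and invoking the Euler relation $\dot\sigma_k^{pq}h_{pq}=k\sigma_k$ on the contribution where $p,q$ contract with two factors of $h$, one recovers exactly the four cubic terms $\dot\sigma_k^{pq}h_p{}^sh_{sq}h_{ij}$, $\dot\sigma_k^{pq}h_i{}^sh_{sp}h_{jq}$, $-\dot\sigma_k^{pq}h_p{}^sh_{sj}h_{qi}$, and $-k\sigma_k h_i{}^sh_{sj}$ that appear in the statement. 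Subtracting $\lambda^\beta f(\lambda^{-1}r)\alpha\sigma_k^{\alpha-1}\dot\sigma_k^{pq}\nabla_p\nabla_qh_{ij}$ from $\partial_th_{ij}$ then produces the stated formula for $Lh_{ij}$.

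For the second formula, I would write $h_i{}^j=g^{jk}h_{ik}$ and apply the evolution $\partial_tg^{jk}=2\Phi h^{jk}-2\gamma g^{jk}$ from Lemma \ref{lem1}. Because the inverse metric is parallel, $L$ commutes with the index raising up to the time-derivative correction, so
\[
Lh_i{}^j = g^{jk}Lh_{ik} + 2\Phi h_i{}^l h_l{}^j - 2\gamma h_i{}^j.
\]
Combining $2\Phi h_i{}^lh_l{}^j$ with the $-\Phi h_i{}^kh_{kj}$ from Lemma \ref{lem3} and the $-k\alpha\Phi h_i{}^sh_{sj}$ term produced by the Ricci commutator yields the coefficient $-(k\alpha-1)\Phi h_i{}^sh_s{}^j$, while $\gamma h_{ij}-2\gamma h_i{}^j$ collapses to $-\gamma h_i{}^j$; the remaining $\nabla r$-type first-order terms are merely raised by $g^{jk}$. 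The main obstacle is strictly bookkeeping, namely ensuring that every Ricci commutator is matched with the correct Gauss cubic term and that no pairing is lost when symmetrising against $\dot\sigma_k^{pq}$; the individual manipulations are all routine.
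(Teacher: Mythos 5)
Your proposal is correct and follows the same route as the paper: expand $\nabla_j\nabla_i\Phi$ by the product and chain rules, exchange $\dot\sigma_k^{pq}\nabla_j\nabla_ih_{pq}$ for $\dot\sigma_k^{pq}\nabla_p\nabla_qh_{ij}$ plus cubic curvature terms, and raise an index using the evolution of $g^{ij}$ from Lemma \ref{lem1}. The only cosmetic difference is that the paper quotes the combined commutation formula $\nabla_j\nabla_ih_{pq}=\nabla_q\nabla_ph_{ij}+h_i{}^sh_{sp}h_{jq}-h_i{}^sh_{sj}h_{pq}+h_{ij}h_q{}^sh_{sp}-h_q{}^sh_{sj}h_{pi}$ in one step, whereas you derive it by chaining Codazzi, the Ricci identity, and the Gauss equation---the same content.
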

\begin{proof}
Recall $\Phi = \lambda^\beta f(\lambda^{-1}r)\sigma_k^\alpha$, we have
$$\n_i\Phi=\lambda^{\beta-1} f'(\lambda^{-1}r)\n_ir \sigma_k^\alpha + \lambda^\beta f(\lambda^{-1}r) \n_i\sigma_k^\alpha,$$
\begin{equation*}\label{14}
    \begin{split}
        \n_j\n_i\Phi=&\lambda^{\beta-2} f''(\lambda^{-1}r)\n_jr\n_ir \sigma_k^\alpha+\lambda^{\beta-1} f'(\lambda^{-1}r)\n_j\n_ir \sigma_k^\alpha \\
        &+ \lambda^{\beta-1} f'(\lambda^{-1}r)\n_ir \n_j\sigma_k^\alpha\\
        &+\lambda^{\beta-1} f'(\lambda^{-1}r)\n_jr \n_i\sigma_k^\alpha + \lambda^\beta f(\lambda^{-1}r) \n_j\n_i\sigma_k^\alpha.
    \end{split}
\end{equation*}
  By a direct calculation,
  \begin{equation*}
  \begin{split}
    \n_j\n_i \sigma_k^\alpha = & \alpha(\alpha - 1)\sigma_k^{\alpha - 2} \dot{\sigma}_k^{st}\n_jh_{st}\dot{\sigma}_k^{pq}\n_ih_{pq} + \alpha \sigma_k^{\alpha - 1}\ddot{\sigma}_k^{pq,st}\n_ih_{pq}\n_jh_{st}\\
    &+ \alpha \sigma_k^{\alpha-1}\dot{\sigma}_k^{pq}\n_j\n_ih_{pq}.
  \end{split}
  \end{equation*}
Recall the Ricci identity,
    $$\n_j\n_ih_{pq} = \n_q\n_ph_{ij} + h_i{ }^sh_{sp}h_{jq} - h_i{ }^sh_{sj}h_{pq} + h_{ij}h_{q}{ }^sh_{sp} - h_q{ }^sh_{sj}h_{pi}.$$
    Hence,
\begin{equation*}
\begin{split}
    \n_j\n_i \sigma_k^\alpha = & \alpha(\alpha - 1)\sigma_k^{\alpha - 2} \dot{\sigma}_k^{st}\n_jh_{st}\dot{\sigma}_k^{pq}\n_ih_{pq} 
    + \alpha \sigma_k^{\alpha - 1}\ddot{\sigma}_k^{pq,st}\n_ih_{pq}\n_jh_{st} \\
    &+ \alpha \sigma_k^{\alpha-1}\dot{\sigma}_k^{pq}\n_q\n_ph_{ij} + \alpha \sigma_k^{\alpha-1}\dot{\sigma}_k^{pq}h_i{ }^sh_{sp}h_{jq}\\
    &- k\alpha \sigma_k^{\alpha}\dot{\sigma}_k^{pq}h_i{ }^sh_{sj} + \alpha \sigma_k^{\alpha-1}\dot{\sigma}_k^{pq}h_{p}{ }^sh_{sq}h_{ij} - \alpha \sigma_k^{\alpha-1}\dot{\sigma}_k^{pq}h_p{ }^sh_{sj}h_{qi}.
\end{split}
\end{equation*}
Now putting this into \eqref{14}, we have
\begin{equation*}
    \begin{split}
        \n_j\n_i\Phi=&\lambda^{\beta-2} f''(\lambda^{-1}r)\n_jr\n_ir \sigma_k^\alpha+\lambda^{\beta-1} f'(\lambda^{-1}r)\n_j\n_ir \sigma_k^\alpha \\
        &+ \lambda^{\beta-1} f'(\lambda^{-1}r)\n_ir \n_j\sigma_k^\alpha+\lambda^{\beta-1} f'(\lambda^{-1}r)\n_jr \n_i\sigma_k^\alpha \\
        &+  \lambda^\beta f(\lambda^{-1}r) \alpha(\alpha - 1)\sigma_k^{\alpha - 2} \dot{\sigma}_k^{st}\n_jh_{st}\dot{\sigma}_k^{pq}\n_ih_{pq} \\
        &+\lambda^\beta f(\lambda^{-1}r) \alpha \sigma_k^{\alpha - 1}\ddot{\sigma}_k^{pq,st}\n_ih_{pq}\n_jh_{st} \\
    &+  \lambda^\beta f(\lambda^{-1}r) \alpha \sigma_k^{\alpha-1}\dot{\sigma}_k^{pq}\n_q\n_ph_{ij} +  \lambda^\beta f(\lambda^{-1}r) \alpha \sigma_k^{\alpha-1}\dot{\sigma}_k^{pq}h_i{ }^sh_{sp}h_{jq}\\
    &-  \lambda^\beta f(\lambda^{-1}r) k\alpha \sigma_k^{\alpha}\dot{\sigma}_k^{pq}h_i{ }^sh_{sj} +  \lambda^\beta f(\lambda^{-1}r) \alpha \sigma_k^{\alpha-1}\dot{\sigma}_k^{pq}h_{p}{ }^sh_{sq}h_{ij}\\
    &-  \lambda^\beta f(\lambda^{-1}r) \alpha \sigma_k^{\alpha-1}\dot{\sigma}_k^{pq}h_p{ }^sh_{sj}h_{qi}.
    \end{split}
\end{equation*}
Substituting the above equation into Lemma \ref{lem3} we get the desired result.
   
\end{proof}
As a corollary, we derive the evolution equations of $|A|^2$ and $H$ which will be used in $C^2$ estimate.
\begin{lemma}[Evolution equation of the $|A|^2$]
     \begin{equation}\label{20}
        \begin{split}
            L|A|^2=& 2\lambda^\beta f(\lambda^{-1}r) \alpha(\alpha - 1)\sigma_k^{\alpha - 2} \n^j\sigma_k\n_i\sigma_k h_j{ }^i \\
        &+2\lambda^\beta f(\lambda^{-1}r) \alpha \sigma_k^{\alpha - 1}\ddot{\sigma}_k^{pq,st}\n_ih_{pq}\n^jh_{st}h_j{ }^i-2\gamma |A|^2 \\
         &- 2(k\alpha-1) \Phi h_i{ }^sh_{s}{ }^{j}h_j{ }^i +  2\lambda^\beta f(\lambda^{-1}r) \alpha \sigma_k^{\alpha-1}\dot{\sigma}_k^{pq}h_{p}{ }^sh_{sq}|A|^2\\
         &+ 2\lambda^{\beta-1} f'(\lambda^{-1}r)\n_ir \n^j\sigma_k^\alpha h_j{ }^i+2\lambda^{\beta-1} f'(\lambda^{-1}r)\n^jr \n_i\sigma_k^\alpha h_j{ }^i \\
        & +2\lambda^{\beta-1} f'(\lambda^{-1}r)\n^j\n_ir h_j{ }^i \sigma_k^\alpha+ 2\lambda^{\beta-2} f''(\lambda^{-1}r)\n^jr\n_ir \sigma_k^\alpha h_j{ }^i\\
         &-2\lambda^\beta f(\lambda^{-1}r) \alpha \sigma_k^{\alpha-1}\dot{\sigma}_k^{pq} \n_p h_i{ }^j\n_qh_j{ }^i
        \end{split}
    \end{equation}
\end{lemma}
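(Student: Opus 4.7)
The strategy is to differentiate $|A|^2 = h_i{ }^j h_j{ }^i$ using the Leibniz rule applied to both the time derivative and the second-order operator appearing in $L$, and then to substitute the formula \eqref{18} for $L h_i{ }^j$ that has just been established.

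Since $|A|^2$ is a scalar,
\[
\p_t |A|^2 = 2 h_j{ }^i \p_t h_i{ }^j ,
\]
while the Leibniz rule applied twice to the spatial part of $L$ yields
\[
\dot{\sigma}_k^{pq}\n_q\n_p(h_i{ }^j h_j{ }^i) = 2 h_j{ }^i \dot{\sigma}_k^{pq}\n_q\n_p h_i{ }^j + 2\dot{\sigma}_k^{pq}\n_p h_i{ }^j \, \n_q h_j{ }^i .
\]
Combining these produces the key intermediate identity
\[
L|A|^2 = 2 h_j{ }^i \, Lh_i{ }^j \; - \; 2\lambda^\beta f(\lambda^{-1}r)\alpha \sigma_k^{\alpha-1}\dot{\sigma}_k^{pq}\n_p h_i{ }^j \, \n_q h_j{ }^i ,
\]
in which the extra quadratic gradient term is precisely the last summand of \eqref{20}.

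Next I would substitute \eqref{18} for $Lh_i{ }^j$ and distribute the contraction with $2 h_j{ }^i$. In the three summands of \eqref{18} where $h_i{ }^j$ appears undifferentiated as an isolated factor — namely $-\gamma h_i{ }^j$, $-(k\alpha-1)\Phi h_i{ }^s h_s{ }^j$, and $\lambda^\beta f(\lambda^{-1}r)\alpha \sigma_k^{\alpha-1}\dot{\sigma}_k^{pq}h_p{ }^s h_{sq} h_i{ }^j$ — the contraction with $h_j{ }^i$ recovers $|A|^2$ (or the cubic scalar $h_j{ }^i h_i{ }^s h_s{ }^j$), matching the corresponding terms in \eqref{20}. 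For the remaining summands of \eqref{18}, which carry gradients of $\sigma_k$, of $r$, Hessians of $r$, or derivatives of $f$, the contraction with $2 h_j{ }^i$ simply appends that factor to the free $i,j$ indices, reproducing the rest of \eqref{20} verbatim.

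The main — and in fact the only — obstacle is careful index bookkeeping: verifying that every term on the right-hand side of \eqref{20} arises exactly once and with the correct sign and coefficient. No new geometric identities are required beyond those already used in deriving \eqref{18}; once the Leibniz expansion above is in hand, the rest is a direct substitution and contraction.
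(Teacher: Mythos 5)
Your proposal is correct and follows the same route as the paper: expand $L|A|^2$ by the Leibniz rule (time derivative and second-order operator), obtaining $L|A|^2 = 2h_j{}^i\,Lh_i{}^j - 2\lambda^\beta f(\lambda^{-1}r)\alpha\sigma_k^{\alpha-1}\dot\sigma_k^{pq}\n_p h_i{}^j\n_q h_j{}^i$ after relabeling indices, and then substitute \eqref{18}. The index bookkeeping you describe is indeed all that remains, and each term contracts with $2h_j{}^i$ exactly as you claim.
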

\begin{proof}
    Recall $|A|^2 = h_i{ }^j h_j{ }^i$, hence 
    \begin{equation*}
        \begin{split}
            &(\p_t - \lambda^\beta f(\lambda^{-1}r) \alpha \sigma_k^{\alpha-1}\dot{\sigma}_k^{pq}\n_q\n_p)|A|^2\\
            =&h_{i}{ }^j (\p_t - \lambda^\beta f(\lambda^{-1}r) \alpha \sigma_k^{\alpha-1}\dot{\sigma}_k^{pq}\n_q\n_p)h_j{ }^i\\
            &+  (\p_t - \lambda^\beta f(\lambda^{-1}r) \alpha \sigma_k^{\alpha-1}\dot{\sigma}_k^{pq}\n_q\n_p)h_i{ }^j h_j{ }^i\\
            &-2\lambda^\beta f(\lambda^{-1}r) \alpha \sigma_k^{\alpha-1}\dot{\sigma}_k^{pq} \n_p h_i{ }^j\n_qh_j{ }^i\\
            =& 2\lambda^\beta f(\lambda^{-1}r) \alpha(\alpha - 1)\sigma_k^{\alpha - 2} \n^j\sigma_k\n_i\sigma_k h_j{ }^i \\
        &+2\lambda^\beta f(\lambda^{-1}r) \alpha \sigma_k^{\alpha - 1}\ddot{\sigma}_k^{pq,st}\n_ih_{pq}\n^jh_{st}h_j{ }^i-2\gamma |A|^2 \\
         &- 2(k\alpha-1) \Phi h_i{ }^sh_{s}{ }^{j}h_j{ }^i +  2\lambda^\beta f(\lambda^{-1}r) \alpha \sigma_k^{\alpha-1}\dot{\sigma}_k^{pq}h_{p}{ }^sh_{sq}|A|^2\\
         &+ 2\lambda^{\beta-1} f'(\lambda^{-1}r)\n_ir \n^j\sigma_k^\alpha h_j{ }^i+2\lambda^{\beta-1} f'(\lambda^{-1}r)\n^jr \n_i\sigma_k^\alpha h_j{ }^i \\
        & +2\lambda^{\beta-1} f'(\lambda^{-1}r)\n^j\n_ir h_j{ }^i \sigma_k^\alpha+ 2\lambda^{\beta-2} f''(\lambda^{-1}r)\n^jr\n_ir \sigma_k^\alpha h_j{ }^i\\
         &-2\lambda^\beta f(\lambda^{-1}r) \alpha \sigma_k^{\alpha-1}\dot{\sigma}_k^{pq} \n_p h_i{ }^j\n_qh_j{ }^i
        \end{split}
    \end{equation*}
\end{proof}
\begin{lemma}[Evolution of $H$]
\begin{equation*}
    \begin{split}
    \p_t H = & \lambda^\beta f(\lambda^{-1}r) \alpha \sigma_k^{\alpha-1}\dot{\sigma}_k^{pq}\n_q\n_pH +  \lambda^\beta f(\lambda^{-1}r) \alpha(\alpha - 1)\sigma_k^{\alpha - 2} |\n \sigma_k|^2 \\
        &+\lambda^\beta f(\lambda^{-1}r) \alpha \sigma_k^{\alpha - 1}\ddot{\sigma}_k^{pq,st}\n_ih_{pq}\n^ih_{st}-\gamma H - (k\alpha-1) \Phi |A|^2\\
        &+  \lambda^\beta f(\lambda^{-1}r) \alpha \sigma_k^{\alpha-1}\dot{\sigma}_k^{pq}h_{p}{ }^sh_{sq}H+ 2\lambda^{\beta-1} f'(\lambda^{-1}r)\l \n r, \n \sigma_k^\alpha \r \\
        & +\lambda^{\beta-1} f'(\lambda^{-1}r)\Delta r \sigma_k^\alpha+ \lambda^{\beta-2} f''(\lambda^{-1}r)|\n r|^2\sigma_k^\alpha
         \end{split}
     \end{equation*}
\end{lemma}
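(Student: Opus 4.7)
The formula for $\p_t H$ is nothing but the trace of \eqref{18}. Since $H = h_i{ }^i$ is just the pointwise trace of the mixed tensor $h_i{ }^j$ and the Levi-Civita connection is metric compatible, the parabolic operator $L = \p_t - \lambda^\beta f(\lambda^{-1}r)\alpha\sigma_k^{\alpha-1}\dot{\sigma}_k^{pq}\n_q\n_p$ commutes with this trace (no cross-gradient correction arises, in contrast with the $|A|^2 = h_i{ }^j h_j{ }^i$ computation in \eqref{20}, where $L$ acting on a product produced the term $-2\lambda^\beta f\alpha\sigma_k^{\alpha-1}\dot{\sigma}_k^{pq}\n_p h_i{ }^j\n_q h_j{ }^i$). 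Hence $LH = (Lh_i{ }^j)\big|_{j=i}$, and the proof reduces to setting $j = i$ in \eqref{18} and summing.

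I would then simplify each term in turn. The second-order diffusion contracts to $\dot{\sigma}_k^{pq}\n_q\n_p H$, which I move to the right-hand side to match the stated form. The remaining purely algebraic simplifications are $\n^i\sigma_k\,\n_i\sigma_k = |\n\sigma_k|^2$, $\ddot{\sigma}_k^{pq,st}\n_i h_{pq}\n^i h_{st}$ left unchanged, $h_i{ }^s h_s{ }^i = |A|^2$, $\dot{\sigma}_k^{pq}h_p{ }^s h_{sq}H$ left unchanged, and $-\gamma h_i{ }^i = -\gamma H$. The two first-order terms involving $\n r$ combine by symmetry into $2\l\n r,\n\sigma_k^\alpha\r$, while the last two terms give $\n^i\n_i r = \Delta r$ and $\n^i r\,\n_i r = |\n r|^2$ respectively. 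Comparing with the stated right-hand side, every coefficient matches.

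There is no substantive obstacle here: the genuine work was already done in deriving \eqref{18}, and this corollary is a mechanical index contraction. Its role is to isolate the scalar evolution of $H$ in a clean form, directly parallel to \eqref{20} for $|A|^2$, so that both quantities are at hand for the $C^2$ estimates of Section 5, where controlling $H$ from below and $|A|^2$ from above is the standard route to uniform parabolicity along the flow.
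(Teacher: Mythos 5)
Your proof is correct and coincides exactly with the paper's one-line argument: the lemma follows by setting $j=i$ in \eqref{18} and summing. Your remark that no cross-gradient terms appear (unlike the $|A|^2$ case) is a helpful sanity check but adds nothing beyond the paper's route.
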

\begin{proof}
    Taking trace over \eqref{18}.
\end{proof}
\subsection{Equations as radial graph}
We first recall some basic formulae for star-shaped hypersurfaces in $\R^{n+1}$.

Let $M = \{r(\theta)\theta: \theta \in \s^n\}$ where $r: \s^n \rightarrow \R_+$ is a smooth positive function. Then the geometric quantities of $M$ can be represented by
\begin{equation}\label{4}
    \begin{aligned}
u & =\frac{r}{\rho}, \quad g_{i j}=r^2\left(g^{\s^n}_{i j}+\phi_i \phi_j\right), \quad g^{i j}=\frac{1}{r^2}\left(g^{ij}_{\s^n}-\frac{\phi^i \phi^j}{\rho^2}\right), \\
h_{i j} & =\frac{r}{\rho}\left(g^{\s^n}_{i j}+\phi_i \phi_j-\phi_{i j}\right), \quad h_i^j=\frac{1}{r \rho}\left(\delta_{i}{ } ^{j}-\phi_{i}{ }^{j}+\frac{\phi_{i l} \phi^l \phi^j}{\rho^2}\right)
\end{aligned}
\end{equation}
where $\rho = \sqrt{1 + |\n \phi|^2}$ and $\phi = \log r$. The normalized flow \eqref{2} can be reduced to the following scalar equations on $\s^n$(see \cite{li2020asymptotic})
\begin{equation}\label{19} 
    \p_t r = -(r^{\beta} + \lambda^{\beta}g(\lambda^{-1}r))\frac{\sqrt{r^2 + |\nabla r|^2}}{r}\sigma_k^\alpha\left(\frac{1}{r \rho}(\delta_{i}{ } ^{j}-\phi_{i}{ }^{j}+\frac{\phi_{i l} \phi^l \phi^j}{\rho^2})\right) + \gamma r,
\end{equation}
\begin{equation}\label{3}
    \p_t \phi = -(e^{(\beta-1)\phi}\rho + \frac{\rho}{r}\lambda^{\beta}g(\lambda^{-1}r))\sigma_k^\alpha\left(\frac{1}{r \rho}(\delta_{i}{ } ^{j}-\phi_{i}{ }^{j}+\frac{\phi_{i l} \phi^l \phi^j}{\rho^2})\right) + \gamma.
\end{equation}

\section{$C^0$-estimate}
In this section, we give the $C^0$ estimate for the normalized flow.
\begin{lemma}\label{lem5}
    Along the normalized flow (\ref{19}), the radial distance $r$ has uniform upper and lower bounds, i.e., $\exists C > 0$ such that
    $$0 < \frac{1}{C} \leq r \leq C < \infty$$
\end{lemma}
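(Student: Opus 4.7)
The plan is to apply the parabolic maximum principle to $r(\cdot,t)$ as a function on $\s^n$, using the scalar evolution equation for $r$ derived in Section~2 (equivalently, Lemma~2.1 combined with the radial graph formulas \eqref{4}), and then to compare the resulting differential (in)equalities for $r_{\max}(t)$ and $r_{\min}(t)$ with a sphere ODE. For the \textbf{upper bound}, at a spatial maximum of $r$ one has $\n\phi=0$ (so $\rho=1$) and $(\phi_{ij})\le 0$, so the expression for $h_i{ }^j$ in \eqref{4} gives $h_i{ }^j\ge \tfrac{1}{r}\delta_i{ }^j$; consequently all principal curvatures are $\ge 1/r$ and $\sigma_k^\alpha\ge \gamma r^{-k\alpha}$, where $\gamma=(C_n^k)^\alpha$. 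Plugging in and discarding the nonnegative $g$-contribution,
$$\p_t r_{\max}\le -\gamma\, r_{\max}^{\beta-k\alpha}+\gamma\, r_{\max},$$
whose right-hand side is $\le 0$ whenever $r_{\max}\ge 1$ (since $\beta-k\alpha\ge 1$). Hence $r_{\max}(t)\le \max\{r_{\max}(0),1\}$.

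For the \textbf{lower bound}, the symmetric computation at a spatial minimum $(\phi_{ij})\ge 0$ gives $\sigma_k^\alpha\le \gamma r^{-k\alpha}$, so
$$\p_t r_{\min}\ge -\gamma\, r_{\min}^{\beta-k\alpha}-\gamma\,\lambda^\beta g(\lambda^{-1}r_{\min})\, r_{\min}^{-k\alpha}+\gamma\, r_{\min}=: F(r_{\min},t).$$
Let $\underline r(t)$ solve the scalar ODE $\dot{\underline r}=F(\underline r,t)$ with $\underline r(0)=r_{\min}(0)$; by standard comparison $r_{\min}(t)\ge \underline r(t)$, so it suffices to produce a positive lower bound for $\underline r$.

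To analyze the ODE, recall that in the normalized time $\lambda(t)=e^{\gamma t}$. Setting $s(t):=\lambda^{-1}\underline r(t)$, a direct computation yields
$$\dot s=-\gamma\,\lambda^{\beta-k\alpha-1}s^{\beta-k\alpha}\bigl(1+g(s)/s^\beta\bigr)\le 0,$$
so $s$ is non-increasing and $s(t)\to 0$. Writing $\psi(s):=g(s)/s^{1+k\alpha}$ (non-decreasing by hypothesis (ii)),
$$\frac{d}{dt}\log\underline r=\gamma\bigl(1-\underline r^{\beta-k\alpha-1}\bigr)-\gamma\,\lambda^{\beta-k\alpha-1}\psi(s).$$
For Theorem~\ref{thm1} $(\beta=1+k\alpha)$ the first term vanishes, and hypothesis (i) together with $s(t)\to 0$ forces $\psi(s(t))$ to be compactly supported in $t$, giving $\log\underline r(t)\ge \log\underline r(0)-\gamma\!\int_0^\infty\psi(s)\,d\tau>-\infty$. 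For Theorem~\ref{thm2} $(\beta>1+k\alpha)$, the Taylor vanishing of $g$ at $0$ yields $\psi(s)\le C s^{[\beta]-k\alpha}$ near $0$, whence $\lambda^{\beta-k\alpha-1}\psi(s)\le C e^{-\gamma([\beta]-\beta+1)t}\to 0$; since the leading term $\gamma(1-\underline r^{\beta-k\alpha-1})$ has a stable equilibrium at $\underline r=1$, once the perturbation drops below the restoring force one concludes $\underline r(t)\ge c_0>0$ for all large $t$, and continuity of the ODE on the remaining finite interval finishes the argument.

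The \textbf{main obstacle} is the last paragraph: the raw forcing $\gamma\lambda^\beta g(\lambda^{-1}r)r^{-k\alpha}$ in the $r_{\min}$ differential inequality is a priori not small as $\lambda\to\infty$, and only the combined hypotheses (i) (vanishing of $g$ near the origin) and (ii) (monotonicity of $\psi$) make it either compactly supported in time (Theorem~\ref{thm1}) or exponentially decaying (Theorem~\ref{thm2}). Without both conditions, the comparison ODE could collapse $\underline r$ to zero.
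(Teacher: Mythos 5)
Your proof is correct and rests on the same two pillars the paper uses: the upper bound follows by discarding the non-negative $g$-contribution and using $\sigma_k^\alpha \ge \gamma r^{-k\alpha}$ at a spatial maximum, and the lower bound hinges on the vanishing structure of $g$ at the origin. The upper-bound argument is identical to the paper's. For the lower bound you take a slightly more elaborate route than the paper: you introduce a comparison ODE solution $\underline r$, pass to the unnormalized variable $s=\lambda^{-1}\underline r$ (so that $s$ is essentially a comparison for the original radius, which collapses to $0$), and then exploit the monotonicity $\dot s\le 0$ and the decay of $\psi(s)=g(s)/s^{1+k\alpha}$ to integrate $\tfrac{d}{dt}\log\underline r$. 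The paper instead treats the two cases directly at the level of $r_{\min}$: in the $\beta=1+k\alpha$ case via a short contradiction argument (once $\lambda^{-1}r_{\min}\le\epsilon$, the $g$-term vanishes so $r_{\min}$ cannot decrease), and in the $\beta>1+k\alpha$ case via the Taylor bound $g(r)\le Cr^{[\beta]+1}$ and a power-count showing the right-hand side of the $r_{\min}$-inequality is positive for small $r_{\min}$. Your presentation has the virtue of handling both cases in a single logarithmic framework and making the role of the unnormalized radius transparent, at the cost of some extra ODE machinery (local Lipschitz continuity of $F$ near $r=0$, existence on the maximal interval, etc.), none of which is problematic but which the paper sidesteps. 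One minor inaccuracy in your closing remark: hypothesis (ii), i.e.\ the monotonicity $(1+k\alpha)g/r\le g'$, plays no role in this $C^0$ lemma — neither in your argument (you only quote it to say $\psi$ is non-decreasing, but you never use that monotonicity) nor in the paper's; it enters only in the $C^1$ estimate.
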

\begin{proof}
\begin{itemize}
 \item {\it Upper Bound}
    
     At the maximum point of $r$ we have $\nabla r = 0$ and $\n^2r \leq 0$.
    By \eqref{4}, we conclude $g_{ij} = r^2g^{\s^n}_{ij}, h_{ij} = r( g_{ij}^{\s^n}-\phi_{ij}) \geq  r g_{ij}^{\s^n}$. Therefore,
    $$\sigma_k^\alpha \geq \frac{\gamma}{r^{k\alpha}} > 0.$$
By (\ref{19})
\begin{equation*}
\begin{split}
    \frac{d}{dt}r_{\max} &= -r_{\max}^{\beta}\sigma_k^\alpha -  \lambda^\beta g(\lambda^{-1}r_{\max})\sigma_k^\alpha + \gamma r_{\max}\\
    &\leq -\gamma r^{\beta - k\alpha}_{\max} + \gamma r_{\max}
\end{split}
\end{equation*}
where we used $g \geq 0$. This implies $r_{\max}(t) \leq C$ since $\beta - k\alpha \geq 1$.

    \item {\it Lower Bound}
    
    At the minimum point of $r$ we have $\nabla r = 0$ and $\n^2r \geq 0$. Similarly,
    $$\sigma_k^\alpha \leq \frac{\gamma}{r^{k\alpha}}.$$

Hence, by \eqref{19}
\begin{equation*}
\begin{split}
     \frac{d}{dt} r_{\min} &= -(r_{\min}^{\beta} + \lambda^\beta g(\lambda^{-1}r_{\min}))\sigma_k^\alpha + \gamma r_{\min}\\
    &\geq -\gamma r_{\min}^{\beta - k\alpha}-  \lambda^\beta g(\lambda^{-1}r_{\min}))\sigma_k^\alpha+ \gamma r_{\min}.
\end{split}
\end{equation*}

\textbf{Case 1. $\beta = 1+k\alpha$.}  Then,
\begin{equation*}
     \frac{d}{dt} r_{\min}\geq -  \lambda^\beta g(\lambda^{-1}r_{\min})\sigma_k^\alpha.
\end{equation*}
Recall we assume $g \equiv 0$ on $[0,\epsilon]$. Now we claim $$r_{\min}(t) \geq \min\{r_{\min}(0),\epsilon\}.$$
Suppose not, then there exists a $t_0$ such that 
$$r_{\min}(t_0) = \mu \in (0,\min\{r_{\min}(0),\epsilon\}).$$
Since $r_{\min}(0) \geq \min\{r_{\min}(0),\epsilon\}$, we can assume there exists $t_1 < t_0$ such that $r_{\min}(t_1) = \min\{r_{\min}(0),\epsilon\}$ and $\mu \leq r_{\min}(t) \leq \min\{r_{\min}(0),\epsilon\}$ on $[t_1,t_0]$.
Notice that $\lambda^{-1} \leq 1$, we have $\lambda^{-1}r_{\min}(t) \leq \epsilon$ for $t \in [t_1,t_0]$. Therefore, $\frac{d}{dt} r_{\min} \geq 0,\forall t \in [t_1,t_0]$ and hence 
$$\mu > r_{\min}(t_0) \geq r_{\min}(t_1) \geq \min\{r_{\min}(0),\epsilon\}.$$
Contradiction!

\textbf{Case 2. $\beta > k\alpha + 1$.} By upper bound of $r$, we have $r_{\min}(t) \leq R$ for some $0<R< \infty$. Since we assume $g'(0) = \cdots = g^{([\beta])}(0) = 0$, it follows $g(r) \leq C r^{[\beta]+1},\forall r \in [0,R]$ for some constant $C$ and hence
$$g(\lambda^{-1}r_{\min}) \leq C \lambda^{-([\beta]+1)}r_{\min}^{[\beta]+1}.$$
Therefore,
\begin{equation*}
\begin{split}
     \frac{d}{dt} r_{\min} &\geq -\gamma r_{\min}^{\beta - k\alpha}-  C\gamma\lambda^{\beta-([\beta]+1)} r_{\min}^{[\beta]+1-k\alpha}+ \gamma r_{\min}\\
     \geq&-\gamma r_{\min}^{\beta - k\alpha}-  C\gamma r_{\min}^{[\beta]+1-k\alpha}+ \gamma r_{\min}.
\end{split}
\end{equation*}
This implies $r_{\min}(t) \geq C$ since $[\beta]+1 -k\alpha > \beta  - k\alpha > 1$.
\end{itemize}
\end{proof}
At the end of this section, we give two consequences due to the $C^0$ estimate. One will be used to get the $C^2$ estimate, and the another will be used to show the flow \eqref{2} preserves the $k$-convexity.

\begin{corollary}\label{cor 2}
Along the normalized flow \eqref{2}, we have the following estimates:
    \begin{equation}\label{10}
        0 < \frac{1}{C} \leq \lambda^\beta f(\lambda^{-1}r) \leq C,
    \end{equation}
    \begin{equation}\label{11}
        \lambda^{\beta-1} f'(\lambda^{-1}r) \leq C,
    \end{equation}
    \begin{equation}\label{12}
      \frac{f'(\lambda^{-1}r)}{\lambda f(\lambda^{-1}r)} \leq C,
    \end{equation}
   \begin{equation}\label{15}
       \lambda^{\beta-2} |f''(\lambda^{-1}r)| \leq C.
   \end{equation}
\end{corollary}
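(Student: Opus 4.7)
Since $f(r)=r^\beta+g(r)$, each of the four quantities splits as a pure-power contribution plus a contribution involving $g$. My plan is to treat the power part with the $C^0$ bound $1/C\le r\le C$ from Lemma~\ref{lem5}, and to treat the $g$ part using the structural hypotheses on $g$ together with the elementary observation that $\lambda(t)\ge \lambda(0)=1$ is non-decreasing. The whole proof is by direct substitution; no delicate analytic device is needed.

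For \eqref{10}, I write $\lambda^\beta f(\lambda^{-1}r)=r^\beta+\lambda^\beta g(\lambda^{-1}r)$. The lower bound is immediate from $g\ge 0$ and $r\ge 1/C$. For the upper bound on $\lambda^\beta g(\lambda^{-1}r)$ I distinguish the two theorems. In the setting of Theorem~\ref{thm1} ($\beta=1+k\alpha$), if $\lambda^{-1}r\le\epsilon$ the $g$-term vanishes by hypothesis; otherwise $\lambda\le r/\epsilon\le C$, and since $\lambda^{-1}r$ then lies in a compact subset of $(0,\infty)$, continuity of $g$ together with boundedness of $\lambda$ closes the estimate. In the setting of Theorem~\ref{thm2}, the flatness hypothesis $g^{(j)}(0)=0$ for $j\le[\beta]$ combined with Taylor's theorem on $[0,C]$ yields $g(s)\le Cs^{[\beta]+1}$, so
\[
\lambda^\beta g(\lambda^{-1}r)\le C\lambda^{\beta-[\beta]-1}r^{[\beta]+1}\le C,
\]
since $\beta-[\beta]-1\le 0$ and $\lambda\ge 1$.

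Estimates \eqref{11} and \eqref{15} follow from the same decomposition applied to $\lambda^{\beta-1}f'(\lambda^{-1}r)=\beta r^{\beta-1}+\lambda^{\beta-1}g'(\lambda^{-1}r)$ and $\lambda^{\beta-2}f''(\lambda^{-1}r)=\beta(\beta-1)r^{\beta-2}+\lambda^{\beta-2}g''(\lambda^{-1}r)$. The pure-power terms are bounded directly by the $C^0$ estimate (using that $r$ is bounded away from both $0$ and $\infty$). For the $g$-derivative terms, the flatness of $g$ on $[0,\epsilon]$ in Theorem~\ref{thm1}, or the Taylor estimates $g'(s)\le Cs^{[\beta]}$ and $g''(s)\le Cs^{[\beta]-1}$ in Theorem~\ref{thm2}, combined with $\lambda\ge 1$ yield the desired control, since in each case the surplus power of $\lambda$ after substitution comes out as $\lambda^{\beta-1-[\beta]}\le 1$. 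Finally, \eqref{12} is an immediate corollary of \eqref{10} and \eqref{11}:
\[
\frac{f'(\lambda^{-1}r)}{\lambda f(\lambda^{-1}r)}=\frac{\lambda^{\beta-1}f'(\lambda^{-1}r)}{\lambda^\beta f(\lambda^{-1}r)}\le C,
\]
because the numerator is bounded above and the denominator is bounded below away from zero.

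The only point that requires real care is verifying that after each substitution the leftover exponent on $\lambda$ is non-positive; this is precisely why the hypotheses demand that $g$ vanish to order at least $[\beta]+1$ at the origin in Theorem~\ref{thm2}, respectively that $g\equiv 0$ on $[0,\epsilon]$ in Theorem~\ref{thm1}. I expect this exponent bookkeeping to be the only ``obstacle'', and it is really just arithmetic.
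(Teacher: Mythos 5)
Your proposal is correct and follows essentially the same strategy as the paper: write $f(r)=r^{\beta}+g(r)$, bound the pure-power part using the $C^{0}$ estimate $1/C\le r\le C$, and bound the $g$-part using vanishing of $g$ near $0$ together with $\lambda\ge 1$ so that the residual exponent $\lambda^{\beta-[\beta]-1}\le 1$. The one minor departure is that you treat Theorem~\ref{thm1} separately via a compactness argument (if $\lambda^{-1}r>\epsilon$ then $\lambda\le r/\epsilon\le C$, so $\lambda^{-1}r$ ranges over a compact set and $g$ is bounded by continuity), whereas the paper applies the Taylor bound $g(s)\le Cs^{[\beta]+1}$ on $[0,R]$ uniformly to both theorems — valid in the setting of Theorem~\ref{thm1} as well, since $g\equiv 0$ on $[0,\epsilon]$ forces all derivatives of $g$ to vanish at the origin. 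Both routes are elementary and close the estimate; the paper's unified version is slightly shorter, while your case split is perhaps more transparent about why the $\epsilon$-neighborhood hypothesis is there. Your derivation of \eqref{12} as a quotient of \eqref{11} and the lower bound in \eqref{10} also matches the paper's intent.
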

\begin{proof}

   {Proof of \eqref{10}}.  Since $r$ has a positive lower bound by $C^0$ estimate, it follows 
    $$\lambda^\beta f(\lambda^{-1}r) = r^{\beta} + \lambda^\beta g(\lambda^{-1}r) \geq r^{\beta} \geq \frac{1}{C} > 0.$$
    On the other hand, since $r$ has an upper bound $R < \infty$, by Taylor's formula we have
    $$g(r) \leq C r^{[\beta]+1},\forall r \in [0,R].$$
    Hence,
    $$\lambda^\beta g(\lambda^{-1}r)\leq C\lambda^{\beta-([\beta]+1)} r^{[\beta]+1} \leq C R^{[\beta]+1}.$$
    
    { Proof of \eqref{11}}. Recall $f'(r) = \beta r^{\beta-1} + g'(r)$,hence
    \begin{equation*}
        \begin{split}
           \lambda^{\beta-1}|f'(\lambda^{-1}r)| = & |\beta r^{\beta-1} + \lambda^{\beta-1}g'(\lambda^{-1}r)|\\
            \leq & \beta r^{\beta-1} + \lambda^{\beta-1}|g'(\lambda^{-1}r)|
            \end{split}
    \end{equation*}
    Since $r$ has an upper bound $R < \infty$, by Taylor's formula
    $$g'(r) \leq C r^{[\beta]},\forall r \in [0,R]$$
    Therefore,
    $$\lambda^{\beta-1}|g'(\lambda^{-1}r)| \leq C \lambda^{\beta-1 - [\beta]}r^{[\beta]} \leq C R^{[\beta]}$$
    
    {Proof of \eqref{12}}. By a direct calculation,
    \begin{equation*}
        \begin{split}
           \frac{|f'(\lambda^{-1}r)|}{\lambda f(\lambda^{-1}r)}  = & \lambda^{-1}\frac{|\beta \lambda^{-(\beta-1)} r^{\beta-1} + g'(\lambda^{-1}r)|}{\lambda^{-\beta}r^\beta + g(\lambda^{-1}r)}\\
            = & \lambda^{-1}\frac{|\beta \lambda r^{\beta-1} + \lambda^\beta g'(\lambda^{-1}r)|}{r^{\beta} + \lambda^\beta g(\lambda^{-1}r)}\\
            =& \frac{|\beta  r^{\beta-1} + \lambda^{\beta-1} g'(\lambda^{-1}r)|}{r^{\beta} + \lambda^\beta g(\lambda^{-1}r)}\\
            \leq &  \frac{\beta  r^{\beta-1} + \lambda^{\beta-1} |g'(\lambda^{-1}r)|}{r^{\beta}}.
        \end{split}
    \end{equation*}
    By the proof of the previous one, we have $\lambda^{\beta-1} |g'(\lambda^{-1}r)| \leq C$ and hence we get the desired bound.
    
    {Proof of \eqref{15}}. Recall $f''(r) = \beta (\beta-1)r^{\beta-2} + g''(r)$, hence
    $$ \lambda^{\beta-2} |f''(\lambda^{-1}r)| \leq \beta (\beta -1 )r^{\beta-2} + \lambda^{\beta-2} |g''(\lambda^{-1}r)|. $$
    Since $r$ has an upper bound $R < \infty$, by Taylor's formula we have
    $$g''(r) \leq C r^{[\beta]-1},\forall r \in [0,R].$$
    Therefore,
    $$\lambda^{\beta-2} |g''(\lambda^{-1}r)| \leq C \lambda^{\beta-1 -[\beta]}r^{[\beta]-1} \leq C R^{[\beta]-1}.$$
    \end{proof}
    
\begin{corollary}
    The $k$-convexity is preserved along the normalized flow \eqref{2}. 
    \end{corollary}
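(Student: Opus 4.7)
The plan is to apply the parabolic maximum principle to the spatial minimum of $\Phi=\lambda^{\beta}f(\lambda^{-1}r)\sigma_k^\alpha$ on $M_t$, using the evolution equation from Corollary \ref{cor1} together with the bounds collected in Corollary \ref{cor 2}. Let $T^{\ast}\in(0,T]$ denote the largest time for which $\kappa(\cdot,t)\in\Gamma_k^+$ on $[0,T^{\ast})$. By G\aa rding's theory, $\partial\Gamma_k^+\cap\overline{\Gamma_k^+}\subset\{\sigma_k=0\}$, so a uniform lower bound $\sigma_k\geq c>0$ on $[0,T^{\ast})$ would force $T^{\ast}=T$; thanks to the two-sided bound \eqref{10} on $\lambda^\beta f(\lambda^{-1}r)$, this is equivalent to a uniform positive lower bound on $\Phi$.

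At a point $x_{\min}\in M_t$ realizing $\Phi_{\min}(t)$ with $t\in[0,T^{\ast})$, one has $\nabla\Phi=0$ and $\nabla^2\Phi\geq 0$. Since $\dot\sigma_k^{ij}$ is positive definite on $\Gamma_k^+$, this gives $\dot\sigma_k^{ij}\nabla_i\nabla_j\Phi\geq 0$, hence $\partial_t\Phi\geq L\Phi$ at $x_{\min}$. Using Corollary \ref{cor1} I would drop the non-negative curvature term $\lambda^\beta f(\lambda^{-1}r)\alpha\sigma_k^{\alpha-1}\Phi\,\dot\sigma_k^{ij}h_i{}^kh_{kj}$ and estimate the last summand by $0<u\leq r$ together with \eqref{12}, obtaining a uniform constant $B>0$ such that, in the sense of Hamilton's trick for the Lipschitz function $\Phi_{\min}$,
\begin{equation*}
\frac{d}{dt}\Phi_{\min}\;\geq\;(\beta-k\alpha)\gamma\,\Phi_{\min}-B\,\Phi_{\min}^{2}\;=\;\Phi_{\min}\bigl(A-B\,\Phi_{\min}\bigr),
\end{equation*}
where $A=(\beta-k\alpha)\gamma\geq\gamma>0$ because $\beta\geq 1+k\alpha$. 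Substituting $z(t)=1/\Phi_{\min}(t)$ turns this into $z'\leq B-Az$, which integrates to $z(t)\leq\max\{z(0),B/A\}$, and therefore
\begin{equation*}
\Phi_{\min}(t)\;\geq\;\min\bigl\{\Phi_{\min}(0),\;A/B\bigr\}=:c_{0}>0\qquad\text{on }[0,T^{\ast}).
\end{equation*}

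If $T^{\ast}<T$, then by continuity $\sigma_k(\cdot,T^{\ast})\geq c>0$ on $M_{T^{\ast}}$ (via \eqref{10}), contradicting the definition of $T^{\ast}$; hence $T^{\ast}=T$ and the normalized flow preserves $k$-convexity. The main difficulty is selecting the right quantity to feed into the maximum principle and recognizing that the negative ``bad'' term $-\Phi^{2}uf'/(r\lambda f)$ is only quadratic in $\Phi$ while the ``good'' term $(\beta-k\alpha)\gamma\,\Phi$ is linear with strictly positive coefficient; this is exactly the logistic-type structure that the ODI $y'\geq Ay-By^{2}$ exploits through the $z=1/y$ substitution to rule out $\Phi_{\min}\to 0$. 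I emphasize that condition (ii) on $g$ is not invoked here: only the $C^{0}$ bound on $r$ and the upper bounds of Corollary \ref{cor 2} enter.
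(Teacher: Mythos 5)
Your proposal is correct and takes essentially the same route as the paper: both apply the maximum principle to $\Phi_{\min}$ using the evolution equation from Corollary \ref{cor1}, discard the non-negative curvature term, bound the quadratic ``bad'' term via \eqref{12} and $u\le r$, and conclude from the logistic-type ODI $\Phi_{\min}'\ge A\Phi_{\min}-B\Phi_{\min}^2$ that $\Phi_{\min}$ stays bounded away from zero. The only differences are cosmetic — you formalize the exit time $T^{\ast}$ and spell out the $z=1/\Phi_{\min}$ integration, where the paper instead posits a first zero of $\Phi$ and asserts the ODI conclusion directly.
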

\begin{proof}
    Recall from Corollary \ref{cor1}
   \begin{equation*}
    \begin{split}
        L\Phi  = \lambda^\beta f(\lambda^{-1}r)\alpha \sigma_k^{\alpha-1}\Phi \dot{\sigma}_k^{ij}h_{i}{ }^kh_{kj}
         + (\beta-k\alpha) \gamma \Phi-\frac{\Phi^2 u}{r}\frac{f'(\lambda^{-1}r)}{\lambda f(\lambda^{-1}r)}\\
    \end{split}
\end{equation*}
Suppose that there exists a first time $t_0 > 0$ and a point $x_0 \in M$ such that $\Phi(x_0,t_0) = 0$ with $\Phi > 0$ on $[0,t_0)$.

On the time interval $[0,t_0]$, we can assume $|u| \leq C$ where $C$ may depend on $t_0$. In a view of \eqref{12}, we conclude
$$\frac{d}{dt}\Phi_{\min} \geq \gamma \Phi - C \Phi^2$$
which implies $\Phi \geq C > 0$ on $[0,t_0]$. This contradicts the fact that $\Phi(x_0,t_0) = 0$.
Therefore, $\Phi > 0$ along the normalized flow and hence $\sigma_k^\alpha > 0$.
\end{proof}

\section{$C^1$-estimate}

In this section, we give the $C^1$ estimate for the normalized flow. Note that the $\nabla$ in this section denotes the Levi-Civita connection on $\s^n$ with respect to the standard metric. 
\begin{lemma}
    Along the normalized flow (\ref{3}), we have
    $$|\n\phi| \leq C.$$
\end{lemma}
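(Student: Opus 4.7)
The plan is to apply the parabolic maximum principle to the auxiliary function $\omega := \tfrac{1}{2}|\nabla\phi|^2$ on $\mathbb{S}^n$. Fix $T>0$ and suppose $\omega$ attains its maximum on $\mathbb{S}^n\times[0,T]$ at an interior space-time point $(\theta_0,t_0)$ with $t_0>0$; at $t=0$ the bound comes from the initial data.

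First I would differentiate the scalar equation \eqref{3} in space, contract with $\phi^i$, and commute covariant derivatives via the Ricci identity on $\mathbb{S}^n$ (this produces only bounded curvature corrections from the constant sectional curvature of the sphere). Writing $G$ for the $\phi$-dependent part of the right-hand side of \eqref{3}, this rewrites the highest-order contribution as $\mathcal{A}^{pq}\nabla_p\nabla_q\omega$, where $\mathcal{A}^{pq}:=-G_{\phi_{pq}}$ is positive-definite by the preserved $k$-convexity. The result takes the schematic form
\[
\partial_t\omega \;=\; \mathcal{A}^{pq}\nabla_p\nabla_q\omega \;-\; \mathcal{A}^{pq}\phi_{ip}\phi^i{}_q \;+\; 2G_\phi\,\omega \;+\; B,
\]
where $G_\phi$ denotes the derivative of $G$ at fixed $\nabla\phi,\nabla^2\phi$ and $B$ collects all lower-order contributions whose coefficients are uniformly controlled by Corollary \ref{cor 2} and Lemma \ref{lem5}.

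Second, at the maximum I would exploit the first-order condition $\phi^j\phi_{ij}=0$: this says $\nabla\phi$ lies in the kernel of $\nabla^2\phi$, and therefore kills both $\nabla\rho$ (since $\rho\nabla_i\rho=\phi^j\phi_{ji}$) and the mixed term $\phi_{il}\phi^l\phi^j/\rho^2$ appearing in $h_i{}^j$ via \eqref{4}. The second-order condition, combined with positive-definiteness of $\mathcal{A}^{pq}$, gives $\mathcal{A}^{pq}\nabla_p\nabla_q\omega\le 0$, while $-\mathcal{A}^{pq}\phi_{ip}\phi^i{}_q\le 0$ is automatic. The maximum-principle inequality therefore reduces to $0\le 2G_\phi\omega+B$ at $(\theta_0,t_0)$.

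Third, I would verify the sign of $G_\phi$. Using $r=e^\phi$ and the homogeneity $\sigma_k(h)=(r\rho)^{-k}\sigma_k(\delta_i{}^j-\phi_i{}^j+\phi_{il}\phi^l\phi^j/\rho^2)$, a direct computation gives
\[
G_\phi \;=\; -\rho\,\sigma_k^\alpha\Bigl[(\beta-1-k\alpha)r^{\beta-1} \;+\; \lambda^{\beta-1}g'(\lambda^{-1}r) \;-\; (1+k\alpha)\lambda^\beta g(\lambda^{-1}r)/r\Bigr],
\]
and the bracket is non-negative by $\beta\ge 1+k\alpha$ together with $(1+k\alpha)g(r)/r\le g'(r)$. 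Thus $2G_\phi\omega\le 0$, supplying the desired good sign. The main obstacle is making this sign quantitative: one must show $B$ grows at most linearly in $\omega$ while the combination $2G_\phi\omega-\mathcal{A}^{pq}\phi_{ip}\phi^i{}_q$ grows faster, so that $0\le 2G_\phi\omega+B$ forces $\omega\le C$. This requires careful bookkeeping of the three ways in which $\phi$ enters the speed (the explicit prefactor $r^\beta+\lambda^\beta g(\lambda^{-1}r)$, the factor $\rho$, and the curvature $\sigma_k^\alpha$); once this is carried out, combining the $C^0$-driven coefficient bounds from Corollary \ref{cor 2} with the retained quadratic term $-\mathcal{A}^{pq}\phi_{ip}\phi^i{}_q$ in the degenerate regimes of $G_\phi$ closes the argument and yields $|\nabla\phi|\le C$.
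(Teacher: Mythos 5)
Your overall strategy is the same as the paper's: apply Hamilton's maximum principle to $\omega=\tfrac12|\nabla\phi|^2$, use the first-order condition $\phi^j\phi_{ij}=0$ to kill $\nabla\rho$ and simplify the Weingarten operator, commute derivatives via the Ricci identity on $\s^n$, exploit positivity of $\dot\sigma_k^{pq}$, and read off the sign of the zeroth-order coefficient from $\beta\ge1+k\alpha$ together with $(1+k\alpha)g(r)/r\le g'(r)$. Your formula for $G_\phi$ matches the paper's term-by-term computation, and identifying $\mathcal{A}^{pq}=-G_{\phi_{pq}}$ with the good sign of $-\mathcal{A}^{pq}\phi_{ip}\phi^i{}_q$ is correct.

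The concluding step, however, has a genuine gap, together with one imprecision upstream. The Ricci-identity corrections are not ``bounded curvature corrections'': they contribute $\dot\sigma_k^{pq}\bigl(\phi_p\phi_q-\delta_{pq}|\nabla\phi|^2\bigr)$, which is quadratic in $\nabla\phi$, i.e.\ of the same order as $\omega$ rather than $O(1)$; the point is that this tensor is negative semidefinite, so it is favorable, not negligible. More seriously, your proposed balance ``$0\le 2G_\phi\omega+B$ forces $\omega\le C$'' needs $G_\phi$ to be \emph{strictly} negative with a uniform lower bound on $|G_\phi|$, and this fails exactly in the setting of Theorem~\ref{thm1}: when $\beta=1+k\alpha$ and $\lambda^{-1}r$ lies in the region where $g\equiv0$, you get $G_\phi=0$ identically, so that inequality gives no information on $\omega$. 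Trying to rescue it with the term $-\mathcal{A}^{pq}\phi_{ip}\phi^i{}_q$ is also not available at this stage, since $\nabla^2\phi$ is not yet controlled (that is the later $C^2$ estimate) and nothing forces that term to be large when $\omega$ is large. What the paper's bookkeeping actually shows is that after the Ricci identity there is \emph{no leftover unfavorable term}: every contribution at the spatial maximum is $\le0$ (the $G_\phi$ piece, the Hessian piece, the $-\phi_{ip}\phi_{iq}$ piece, and the Ricci piece). Hence one concludes $\partial_t\omega_{\max}\le0$, and the bound $|\nabla\phi|\le C$ follows from monotonicity and the initial data, not from any coercivity of $G_\phi$. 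You should (i) keep the Ricci correction and verify its sign against $\dot\sigma_k^{pq}$, and (ii) replace the quantitative dominance argument by the simpler observation that $\partial_t\omega\le0$ at the maximum.
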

\begin{proof}
    Let $\theta_0$ be a maximum point of $|\n\phi|^2(\cdot,t)$, then at this point by (\ref{3}) we conclude
\begin{equation}\label{7}
    \begin{split}
        \p_t(\frac{1}{2}|\n\phi|^2) = \phi_i(\p_t\phi)_i=&-\phi_i\{ (e^{k\alpha\phi}\rho + \frac{\rho}{r}\lambda^\beta g(\lambda^{-1}r))\sigma_k^\alpha\}_i \\
        =& -\phi_i(e^{(\beta-1)\phi} (\beta-1)\phi_i \sigma_k^\alpha \rho+e^{(\beta-1)\phi}\rho (\sigma_k^\alpha)_i)\\
        &+\phi_i \frac{\rho}{r^2}r_i\lambda^\beta g(\lambda^{-1}r)\sigma_k^\alpha\\
        & -\phi_i \frac{\rho}{r}\lambda^{\beta-1} g'(\lambda^{-1}r)r_i\sigma_k^\alpha- \phi_i\frac{\rho}{r}\lambda^\beta g(\lambda^{-1}r)(\sigma_k^\alpha)_i \\
        =& -(\beta-1) e^{(\beta-1)\phi} |\n\phi|^2 \sigma_k^\alpha \rho-e^{(\beta-1)\phi}\rho\phi_i(\sigma_k^\alpha)_i\\
        &+|\n\phi|^2 \frac{\rho}{r}\lambda^\beta g(\lambda^{-1}r)\sigma_k^\alpha -|\n\phi|^2 \rho \lambda^{\beta-1} g'(\lambda^{-1}r) \sigma_k^\alpha \\
        &- \frac{\rho}{r}\lambda^\beta g(\lambda^{-1}r)\phi_i(\sigma_k^\alpha)_i
    \end{split}
\end{equation}
where we used  $(\sqrt{1 + |\n\phi|^2})_i = 0$ at $(\theta_0,t)$ and $\phi_i = \frac{r_i}{r}$.

By a direct calculation, 
\begin{equation*}
    \begin{split}
        (\sigma_k^\alpha)_i = &\alpha \sigma_k^{\alpha-1} \frac{\p \sigma_k}{\p h_p{ }^q}\n_i h_p{ }^q\\
        =&\alpha \sigma_k^{\alpha-1}\frac{\p \sigma_k}{\p h_p{ }^q}\n_ih_{pm}g^{mq} + \alpha \sigma_k^{\alpha-1}\frac{\p \sigma_k}{\p h_p{ }^q}h_{pm}\n_ig^{mq}\\
        =&\alpha \sigma_k^{\alpha-1}\dot{\sigma}_k^{pq}\n_ih_{pq} - \alpha \sigma_k^{\alpha-1}\frac{\p \sigma_k}{\p h_p{ }^q}h_{pm}g^{ms}g^{qt}\n_ig_{st}\\
        =&\alpha \sigma_k^{\alpha-1}\dot{\sigma}_k^{pq}(\n_ih_{pq}-h_{pm}g^{ms}\n_ig_{sq}).
    \end{split}
\end{equation*}
Use (\ref{4}), we have
$$\n_ig_{sq} = 2\phi_i g_{sq} + e^{2\phi}\phi_{si}\phi_q + e^{2\phi}\phi_s\phi_{qi}$$
$$\n_ih_{pq} = \phi_ih_{pq} + \frac{e^\phi}{\rho}(\phi_{pi}\phi_q + \phi_p\phi_{qi} - \phi_{pqi})$$
where we used the fact that $\rho_i = 0$ at $(\theta_0,t)$.

Therefore,
\begin{equation*}
    \begin{split}
        (\sigma_k^\alpha)_i =& \alpha \sigma_k^{\alpha-1}\dot{\sigma}_k^{pq}\bigg(\phi_ih_{pq} + \frac{e^\phi}{\rho}(\phi_{pi}\phi_q + \phi_p\phi_{qi} - \phi_{pqi})\\
     &-h_{pm}g^{ms}(2\phi_i g_{sq} + e^{2\phi}\phi_{si}\phi_q + e^{2\phi}\phi_s\phi_{qi})\bigg).
    \end{split}
\end{equation*}
At $(\theta_0,t)$, we have $\phi_{pi}\phi_p = 0$. Hence,
\begin{equation*}
        \phi_i(\sigma_k^\alpha)_i = -k\alpha \sigma_k^{\alpha}|\n\phi|^2 -\alpha \sigma_k^{\alpha-1}\dot{\sigma}_k^{pq}\frac{e^\phi}{\rho} \phi_i\phi_{pqi}.
\end{equation*}
where we used $\dot{\sigma_k}^{pq}h_{pq} = k\sigma_k$.

By the Ricci identity, Codazzi equation and Gauss equation,

$$
\begin{aligned}
\phi_{pqi}=&\phi_{piq}+\phi_j R_{jpq i}^{\mathbb{S}^n}\\
 =&\phi_{ipq}+\phi_j\left(\delta_{jq} \delta_{pi}-\delta_{ji} \delta_{pq}\right) \\
=&\phi_{ipq}+\phi_q \delta_{pi}-\phi_i \delta_{pq}.
\end{aligned}
$$
Thus,
$$
\begin{aligned}
    \phi_i \phi_{pqi}=&\phi_i \phi_{ipq}+\phi_p\phi_q-\delta_{pq}|\n \phi|^2\\
    =&\left(\frac{1}{2}|\n \phi|^2\right)_{pq}-\phi_{ip} \phi_{iq}+\phi_p \phi_q-\delta_{pq}|\n \phi|^2.
\end{aligned}
$$

Hence we have

\begin{equation*}
\phi_i(\sigma_k^\alpha)_i=  -k\alpha \sigma_k^{\alpha}|\n\phi|^2  
 - \alpha \sigma_k^{\alpha-1}\dot{\sigma}_k^{pq}\frac{e^\phi}{\rho}\left(\left(\frac{1}{2}|\n \phi|^2\right)_{pq}-\phi_{ip} \phi_{iq}+\phi_p \phi_q-\delta_{pq}|\n \phi|^2 \right).
\end{equation*}
Now putting these into (\ref{7}), we have
\begin{equation*}
    \begin{split}
        \p_t(\frac{1}{2}|\n\phi|^2) 
        \leq& -(\beta - k\alpha - 1)\rho \sigma_k^\alpha |\n \phi|^2 e^{(\beta-1)\phi}\\
        &+\alpha \sigma_k^{\alpha-1}\dot{\sigma}_k^{pq}e^{\beta\phi}\left(\left(\frac{1}{2}|\n \phi|^2\right)_{pq}-\phi_{ip} \phi_{iq}+\phi_p \phi_q-\delta_{pq}|\n \phi|^2 \right)  \\
        &+(1+k\alpha)|\n\phi|^2 \frac{\rho}{r}\lambda^\beta g(\lambda^{-1}r)\sigma_k^\alpha  -|\n\phi|^2 \rho \lambda^{\beta - 1} g'(\lambda^{-1}r)\sigma_k^\alpha\\
        &+ \alpha \sigma_k^{\alpha-1}\dot{\sigma}_k^{pq}\lambda^{\beta}g(\lambda^{-1}r)\left(\left(\frac{1}{2}|\n \phi|^2\right)_{pq}-\phi_{ip} \phi_{iq}+\phi_p \phi_q-\delta_{pq}|\n \phi|^2 \right).
    \end{split}
\end{equation*}
Notice that
\begin{equation*}
    \begin{split}
        &(1+k\alpha)|\n\phi|^2 \frac{\rho}{r}\lambda^\beta g(\lambda^{-1}r)\sigma_k^\alpha  -|\n\phi|^2 \rho \lambda^{\beta - 1} g'(\lambda^{-1}r)\sigma_k^\alpha\\
        &= |\n \phi|^2\sigma_k^\alpha \lambda^{\beta-1}\rho((1+k\alpha)\frac{g(\lambda^{-1}r)}{\lambda^{-1}r}-g'(\lambda^{-1}r)).
    \end{split}
\end{equation*}
By our assumption on $g$, we have 
$$(1+k\alpha)\frac{g(\lambda^{-1}r)}{\lambda^{-1}r}-g'(\lambda^{-1}r) \leq 0.$$
Therefore,
\begin{equation}\label{8}
    \begin{split}
        \p_t(\frac{1}{2}|\n\phi|^2) \leq & -(\beta - k\alpha - 1)\rho \sigma_k^\alpha |\n \phi|^2 e^{(\beta-1)\phi}\\
        &  +\alpha \sigma_k^{\alpha-1}\dot{\sigma}_k^{pq}e^{\beta\phi}\left(\left(\frac{1}{2}|\n \phi|^2\right)_{pq}-\phi_{ip} \phi_{iq}+\phi_p \phi_q-\delta_{pq}|\n \phi|^2 \right) \\
        &+ \alpha \sigma_k^{\alpha-1}\dot{\sigma}_k^{pq}\lambda^\beta g(\lambda^{-1}r)\left(\left(\frac{1}{2}|\n \phi|^2\right)_{pq}-\phi_{ip} \phi_{iq}+\phi_p \phi_q-\delta_{pq}|\n \phi|^2 \right). 
    \end{split}
\end{equation}
Recall $\beta \geq k\alpha + 1$, we have 
$$-(\beta - k\alpha - 1)\rho \sigma_k^\alpha |\n \phi|^2 e^{(\beta-1)\phi} \leq 0$$
Since $\theta_0$ is the maximum point of $(\frac{1}{2}|\n\phi|^2) $, it follows $\left(\frac{1}{2}|\n \phi|^2\right)_{kl} \leq 0$. Notice that $e^{(k\alpha+1)\phi} + e^{(k\alpha+1)\gamma t}g(e^{-\gamma t}r) = \lambda^\beta f(\lambda^{-1}r) > 0$, we have
$$\alpha \sigma_k^{\alpha-1}\dot{\sigma}_k^{pq} \lambda^\beta f(\lambda^{-1}r)\phi_{ip}\phi_{iq} \geq 0$$
$$\alpha \sigma_k^{\alpha-1}\dot{\sigma}_k^{pq}\lambda^\beta f(\lambda^{-1}r)(\phi_p\phi_q - \delta_{pq}|\n \phi|^2) \leq 0$$
This implies $\p_t |\n \phi|^2 \leq 0$.
\end{proof}
A corollary of the $C^1$ estimate is the following bound on the support function.
\begin{corollary}\label{cor3}
    Along the normalized flow \eqref{2}, we have $u \geq C > 0$.
\end{corollary}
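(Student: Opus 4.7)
The claim is essentially an algebraic consequence of the two estimates already proved, so the plan is quick: combine the formula for $u$ in terms of the radial graph with the $C^0$ and $C^1$ bounds.

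Recall from \eqref{4} that for a star-shaped hypersurface written as $r=r(\theta)$ with $\phi=\log r$, the support function is
\[
u \;=\; \frac{r}{\rho}, \qquad \rho=\sqrt{1+|\nabla\phi|^2},
\]
where $\nabla$ is the round covariant derivative on $\mathbb S^n$. So $u$ is controlled as soon as $r$ is bounded below and $|\nabla\phi|$ is bounded above.

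The plan is simply this. First, by Lemma~\ref{lem5} (the $C^0$ estimate) there is a constant $c_1>0$, depending only on $n,k,\alpha,\beta$ and $X_0$, with $r\geq c_1$ along the normalized flow. Second, by the $C^1$ estimate proved immediately above this corollary, there is a constant $c_2$ with $|\nabla\phi|\leq c_2$, and hence
\[
\rho \;=\; \sqrt{1+|\nabla\phi|^2} \;\leq\; \sqrt{1+c_2^{\,2}}.
\]
Plugging these two bounds into $u=r/\rho$ gives
\[
u \;\geq\; \frac{c_1}{\sqrt{1+c_2^{\,2}}} \;=:\; C \;>\; 0,
\]
which is the desired inequality.

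There is no real obstacle here: the content of the statement is entirely absorbed into the preceding $C^0$ and $C^1$ lemmas, and the corollary only packages them in the form needed later (for instance, to justify that the denominators $u$ appearing in the evolution equations of $\Phi$ and in the $C^2$ estimate stay uniformly positive). Thus the proof will be two or three lines citing Lemma~\ref{lem5} and the preceding lemma of this section together with \eqref{4}.
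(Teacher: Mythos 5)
Your argument is exactly the paper's: write $u=r/\rho=r/\sqrt{1+|\nabla\phi|^2}$, then invoke the lower bound on $r$ from Lemma~\ref{lem5} and the upper bound on $|\nabla\phi|$ from the preceding $C^1$ lemma. The paper just adds the one-line remark that $\langle\partial_r,\nu\rangle=u/r\geq C>0$, so star-shapedness is preserved, but the proof of the stated inequality is identical.
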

\begin{proof}
   Recall $u=\frac{r}{\rho}=\frac{r}{\sqrt{1+|\n \phi|^2}}$. Since $|\n \phi|$ and $r$ are bounded by $C^0$ and $C^1$ estimates, it follows that $u$ is bounded. 
   
   Recall $u=\langle X, \nu\rangle=r\left\langle\partial_r, \nu\right\rangle$, we have

$$
\left\langle\partial_r, \nu\right\rangle=\frac{u}{r} \geq C > 0
$$

Thus, the hypersurface preserves star-shaped along the flow \eqref{2}.
\end{proof}
\section{$C^2$-estimate}
In this section, we do analysis on the hypersurfaces $M_t$. Let $\n$ be the Levi-Civita connection on $M_t$.
Denote the parabolic operator 
$$L = (\p_t - \lambda^\beta f(\lambda^{-1}r)\alpha \sigma_k^{\alpha-1}\dot{\sigma}_k^{ij}\n_j\n_i).$$

\subsection{Lower bound of $\sigma_k^\alpha$}
\begin{lemma}\label{lem 2}
    Along the normalized flow \eqref{2}, we have 
    $$\Phi \geq C > 0$$
\end{lemma}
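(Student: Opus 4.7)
The plan is to apply the parabolic minimum principle to $\Phi$, starting from the evolution identity in Corollary \ref{cor1}:
\begin{equation*}
L\Phi = \lambda^\beta f(\lambda^{-1}r)\,\alpha\sigma_k^{\alpha-1}\Phi\,\dot{\sigma}_k^{ij}h_i{}^k h_{kj} + (\beta-k\alpha)\gamma\Phi - \frac{\Phi^2 u}{r}\,\frac{f'(\lambda^{-1}r)}{\lambda f(\lambda^{-1}r)}.
\end{equation*}
The first term on the right is nonnegative on $\Gamma_k^+$: the G{\aa}rding inequality gives positive definiteness of $(\dot\sigma_k^{ij})$ on the $k$-convex cone, and the matrix $h_i{}^k h_{kj}$ is positive semidefinite (with eigenvalues $\kappa_i^2$ in a diagonalizing frame). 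Discarding this term, using $\beta-k\alpha\geq 1$, and invoking the uniform bounds already in place (Lemma \ref{lem5} for $r$, Corollary \ref{cor 2} for the quantity $|f'(\lambda^{-1}r)|/(\lambda f(\lambda^{-1}r))$, and $u=r/\rho\leq r\leq C$ from the $C^0$ estimate), I expect the identity to collapse to the clean differential inequality
\begin{equation*}
L\Phi \;\geq\; \gamma\,\Phi - C\,\Phi^2.
\end{equation*}

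Next I would apply the minimum principle. At a spatial minimum point of $\Phi(\cdot,t)$ one has $\n\Phi=0$ and $\n^2\Phi\geq 0$; positivity of $(\dot\sigma_k^{ij})$ then gives $\dot\sigma_k^{ij}\n_j\n_i\Phi\geq 0$, so the second-order part of $L$ contributes nonnegatively to $\p_t\Phi$ at that point. The standard Dini derivative argument therefore produces
\begin{equation*}
\frac{d}{dt}\Phi_{\min}(t) \;\geq\; \gamma\,\Phi_{\min}(t) - C\,\Phi_{\min}(t)^2.
\end{equation*}
Since the initial hypersurface is smooth, $k$-convex, and star-shaped with the origin in its interior, we have $r>0$ on $M_0$, hence $f(r)>0$ and $\sigma_k^\alpha>0$, so $\Phi_{\min}(0)>0$. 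A direct comparison with the logistic ODE $y'=y(\gamma-Cy)$ then yields $\Phi_{\min}(t)\geq \min\{\Phi_{\min}(0),\gamma/C\}>0$ for all $t$, which is exactly the claim.

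The only thing to watch is absorbing the (possibly signed) last term of $L\Phi$ cleanly into $-C\Phi^2$; this is why the two-sided bound $|f'(\lambda^{-1}r)|/(\lambda f(\lambda^{-1}r))\leq C$ coming out of the proof of Corollary \ref{cor 2} is precisely what is needed, rather than only a one-sided estimate. Apart from this observation, no serious obstacle is anticipated: the argument is a textbook parabolic minimum principle driven entirely by the $C^0$ and $C^1$ bounds already established in the preceding sections.
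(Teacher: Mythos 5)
Your proposal is correct and follows essentially the same route as the paper: apply the parabolic minimum principle to the identity in Corollary~\ref{cor1}, discard the nonnegative $\dot\sigma_k^{ij}h_i{}^kh_{kj}$ term, bound the coefficient $\frac{f'(\lambda^{-1}r)}{\lambda f(\lambda^{-1}r)}\frac{u}{r}$ by a constant via the $C^0$ estimate and Corollary~\ref{cor 2}, and compare $\Phi_{\min}$ with a logistic ODE. One small remark: since $\Phi^2 u/r>0$, only the \emph{upper} bound $\frac{f'(\lambda^{-1}r)}{\lambda f(\lambda^{-1}r)}\leq C$ from \eqref{12} is actually needed to bound the last term from below, so the two-sided bound you worry about is not required.
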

\begin{proof}
    By Corollary \ref{cor1}, at the minimum point of $\Phi$ we have
    $$\frac{d}{dt} \Phi_{\min} \geq  (\beta - k\alpha)\gamma \Phi_{\min}- \frac{f'(\lambda^{-1}r)}{\lambda f(\lambda^{-1}r)}\frac{\Phi_{\min}^2 u}{r}$$
    In a view of \eqref{12}, Corollary \ref{cor3} and $C^0$-estimate, we conclude
    $$\frac{f'(\lambda^{-1}r)}{\lambda f(\lambda^{-1}r)}\frac{ u}{r} \leq C$$
    Therefore,
     $$\frac{d}{dt} \Phi_{\min} \geq (\beta-k\alpha)\gamma \Phi_{\min} - C\Phi_{\min}^2$$
     This implies $\Phi_{\min} \geq C > 0$.
\end{proof}
\subsection{Upper Bound of $\sigma_k^\alpha$}
\begin{lemma}\label{lem4}
    Along the normalized flow \eqref{2}, $\Phi$ has an upper bound, i.e., $\Phi \leq C$.
\end{lemma}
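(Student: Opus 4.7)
The plan is to apply the parabolic maximum principle to the auxiliary quantity
$$ W := \frac{\Phi}{u-\epsilon}, $$
where $\epsilon>0$ is chosen strictly smaller than the uniform positive lower bound on $u$ supplied by Corollary~\ref{cor3}. Since $u$ is also bounded above by the $C^0$ estimate, the denominator $u-\epsilon$ is uniformly pinched between two positive constants, so bounding $W$ from above is equivalent to bounding $\Phi$ from above.

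At a point where $W$ attains its spatial maximum one has $\nabla W=0$, i.e. $\nabla\Phi = W\nabla u$, and the spatial second-derivative part of $LW$ is nonnegative there. A direct computation therefore gives
$$ (u-\epsilon)\,LW \;=\; L\Phi - W\,Lu $$
at the maximum. Substituting the evolution of $\Phi$ from Corollary~\ref{cor1} and the evolution of $u$ from \eqref{19}, the two positive second-order contributions $\lambda^\beta f\,\alpha\sigma_k^{\alpha-1}\dot\sigma_k^{ij}h_i{}^k h_{kj}$, weighted respectively by $\Phi$ and by $Wu$, combine via the identity $\Phi-Wu=-\epsilon W$ into the single negative contribution
$$ -\epsilon\,W\,\lambda^\beta f(\lambda^{-1}r)\,\alpha\sigma_k^{\alpha-1}\,\dot\sigma_k^{ij}h_i{}^k h_{kj}. $$

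The key geometric input is the Newton--Maclaurin inequality on $\Gamma_k^+$,
$$ \dot\sigma_k^{ij}h_i{}^kh_{kj} \;=\; \sigma_1\sigma_k-(k+1)\sigma_{k+1} \;\ge\; k\,(C_n^k)^{-1/k}\,\sigma_k^{1+1/k}, $$
which, combined with the lower bound on $\lambda^\beta f(\lambda^{-1}r)$ from Corollary~\ref{cor 2}, forces the combined second-order term to be bounded above by $-c\,\epsilon\,W\,\Phi^{1+1/(k\alpha)}$. I would then bound every remaining first-order contribution to $L\Phi-WLu$ by a multiple of $W^2$, using the $C^0$ and $C^1$ estimates together with \eqref{10}--\eqref{12} on $\lambda^\beta f$, $\lambda^{\beta-1}f'$ and $f'/(\lambda f)$. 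Since $\Phi$ and $W$ are comparable, $W^{2+1/(k\alpha)}$ strictly dominates $W^2$ as $W\to\infty$, so whenever $W_{\max}(t)$ exceeds a suitable threshold one obtains $(u-\epsilon)LW<0$ at the maximum, hence $\partial_t W_{\max}(t)<0$. The standard ODE comparison then yields a uniform bound on $W$, and therefore on $\Phi$.

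The main technical hurdle is the precise cancellation that produces the single negative second-order term: this only works because $\epsilon>0$ is strictly positive (the choice $\epsilon=0$ cancels the second-order part completely, after which the remaining first-order terms contain the unfavourable quadratic $(1+k\alpha)\Phi^2/u$ that cannot be absorbed by the helpful $-\Phi^2 uf'/(r\lambda f)$ alone, since $r^2-u^2=u^2|\nabla\phi|^2\ge 0$). The super-quadratic gain from Newton--Maclaurin is therefore essential, and it is what makes the argument uniform across all the parameter regimes in Theorems~\ref{thm1} and~\ref{thm2}, since the Newton--Maclaurin constant depends only on $n$ and $k$ and the $k$-convexity needed to apply it has already been shown to be preserved.
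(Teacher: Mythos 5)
Your proposal is correct and follows essentially the same approach as the paper: the paper works with $Q=\log\Phi-\log(u-a)$, $a=\tfrac12\inf u$, which at its maximum yields exactly the same cancellation (the factor $-\tfrac{a}{u-a}$ playing the role of your $-\epsilon W$), and then invokes the same Newton--MacLaurin bound $\dot\sigma_k^{ij}h_i{}^lh_{lj}\ge k(C_n^k)^{-1/k}\sigma_k^{1+1/k}$ to get the superlinear negative term. Working with $\Phi/(u-\epsilon)$ rather than its logarithm is only a cosmetic change; the maximum-point identities and the resulting differential inequality are equivalent.
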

\begin{proof}
Consider the auxiliary function $$Q = \log \Phi - \log (u-a)$$
where $a = \frac{1}{2}\inf_{M \times [0,T]} u$. Then
\begin{equation*}
\begin{split}
LQ =&\frac{L\Phi}{\Phi} - \frac{Lu}{u-a} + \frac{1}{\Phi^2}\lambda^\beta f(\lambda^{-1}r)\alpha \sigma_k^{\alpha-1}\dot{\sigma}_k^{ij}\n_j\Phi \n_i\Phi\\
&- \frac{1}{(u-a)^2}\lambda^\beta f(\lambda^{-1}r)\alpha \sigma_k^{\alpha-1}\dot{\sigma}_k^{ij}\n_ju\n_iu.
\end{split}
\end{equation*}
At the maximum point of $Q$, we have $\frac{\n \Phi}{\Phi} = \frac{\n u}{u-a}$. Hence,
\begin{equation}\label{13}
    \begin{split}
        LQ = &\frac{L\Phi}{\Phi} - \frac{Lu}{u-a}\\
        = &\lambda^\beta f(\lambda^{-1}r)\alpha\sigma_k^{\alpha-1}\dot{\sigma}_k^{ij}h_i{ }^kh_{kj} + \gamma(\beta-k\alpha)- \frac{f'(\lambda^{-1}r)}{\lambda f(\lambda^{-1}r)}\frac{\Phi u}{r} \\
        &+(1+k\alpha)\frac{\Phi}{u-a} - \gamma \frac{u}{u-a} - \frac{1}{u-a}\lambda^{\beta-1} f'(\lambda^{-1}r)\sigma_k^\alpha\l X, \n r \r\\
           &-\lambda^\beta f(\lambda^{-1}r)\alpha\sigma_k^{\alpha-1}\dot{\sigma}_k^{ij}h_i{ }^lh_{lj}\frac{u}{u-a}\\
           \leq & -C\sigma_k^{\alpha-1}\dot{\sigma}_k^{ij}h_i{ }^lh_{lj} + C\Phi + C
    \end{split}
\end{equation}
where we used $C^0$ estimate together with \eqref{10},\eqref{11},\eqref{12} and Lemma \ref{lem 2}.

We compute in the normal coordinates such that $\{h_i{ }^j\}$ is diagonal. We then have
$$\dot{\sigma}_k^{ij}h_i{ }^lh_{lj} = \frac{\p \sigma_k}{\p \kappa_i} \kappa_i^2 = H\sigma_k-(k+1)\sigma_{k+1}.$$
Recall Netwon-MacLaurin inequalities $\sigma_{k+1} \leq C^{k+1}_n(\frac{\sigma_k}{C_n^k})^{\frac{k+1}{k}}$ and $H \geq n(\frac{\sigma_k}{C_n^k})^{\frac{1}{k}}$, we have
\begin{equation*}
    \begin{split}
        H\sigma_k-(k+1)\sigma_{k+1} \geq &n(\frac{\sigma_k}{C_n^k})^{\frac{1}{k}}\sigma_k - (k+1)C^{k+1}_n(\frac{\sigma_k}{C_n^k})^{\frac{k+1}{k}}\\
        =& (\frac{n}{(C_n^k)^{\frac{1}{k}}}-(k+1)\frac{C_n^{k+1}}{(C_n^k)^{1+\frac{1}{k}}})\sigma_k^{1 + \frac{1}{k}}.
    \end{split}
\end{equation*}
Notice that $\frac{n}{(C_n^k)^{\frac{1}{k}}}-(k+1)\frac{C_n^{k+1}}{(C_n^k)^{1+\frac{1}{k}}} > 0$. Putting this term into \eqref{13}, we have 
$$LQ \leq -C\sigma_k^{\alpha + \frac{1}{k}} + C \Phi + C \leq -C\Phi^{1+\frac{1}{k\alpha}} + C\Phi + C.$$
This implies $\Phi \leq C$.
\end{proof}
\subsection{Curvature Estimate}
\subsubsection{$k=1$}
We first derive the curvature bounds when $k=1$.

\begin{lemma}
    Along the normalized flow (\ref{3}), the curvature is bounded, i.e.,
    $$|A|^2 \leq C.$$
\end{lemma}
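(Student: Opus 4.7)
\medskip
\noindent\textbf{Proof proposal.} When $k=1$, the function $\sigma_1 = H$ is linear in $h_{ij}$, so three simplifications occur simultaneously: $\dot\sigma_1^{pq} = \delta^{pq}$, $\ddot\sigma_1^{pq,st} \equiv 0$, and $\dot\sigma_1^{pq}h_p{}^sh_{sq} = |A|^2$. The parabolic operator reduces to $L = \partial_t - \alpha\Phi H^{-1}\Delta_{M_t}$, a scalar linear operator on $M_t$, and equation \eqref{19} specializes to
\[
Lu = -(1+\alpha)\Phi + \gamma u + \lambda^{\beta-1}f'(\lambda^{-1}r)H^\alpha\langle X,\nabla r\rangle + \alpha\Phi H^{-1}|A|^2 u,
\]
the crucial feature being the coercive term $\alpha\Phi H^{-1}|A|^2 u$ with a positive sign. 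The plan is to exploit this by playing it off against the dominant positive $|A|^4$ term appearing in the evolution of $|A|^2$.

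The plan is to apply the parabolic maximum principle to the test function
\[
W = \log|A|^2 - N\log(u-a), \qquad a := \tfrac{1}{2}\inf_{M\times[0,T]} u > 0,
\]
where positivity of $a$ comes from Corollary \ref{cor3}, and $N > 2$ is chosen so that $Nu/(u-a) \geq 3$ (which is possible since $u/(u-a) \geq 1$ always). Assume $W$ attains a space-time maximum at an interior point $(x_0,t_0)$ with $t_0 > 0$. Gradient vanishing gives $\nabla|A|^2/|A|^2 = N\nabla u/(u-a)$, and the standard identity combined with $\ddot\sigma_1 = 0$ yields
\[
LW = \frac{L|A|^2}{|A|^2} - \frac{N\,Lu}{u-a} + (N^2 - N)\,\alpha\Phi H^{-1}\frac{|\nabla u|^2}{(u-a)^2} \geq 0.
\]
Substituting \eqref{19} and \eqref{20} (with $k=1$), the highest-order contributions isolate as
\[
\left(2 - \frac{Nu}{u-a}\right)\alpha\Phi H^{-1}|A|^2,
\]
which by the choice of $N$ is bounded above by $-c_0|A|^2$ for a uniform $c_0 > 0$, using the bounds on $\Phi$, $H$, and $u$ from Lemmas \ref{lem5}, \ref{lem 2}, \ref{lem4} and Corollaries \ref{cor 2}, \ref{cor3}.

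The remaining terms are then handled as follows: the cubic term $-2(\alpha-1)\Phi\operatorname{tr}(A^3)/|A|^2$ is $O(|A|)$ and absorbed by Young's inequality into $\tfrac{c_0}{2}|A|^2 + C$; the grad-$r$ terms are bounded by $C(1 + |\nabla h|/|A|)$ via Corollary \ref{cor 2}; and the ``bad'' gradient term $2\alpha(\alpha-1)\Phi H^{-2}(\nabla^jH)(\nabla_iH)h_j{}^i/|A|^2$ is bounded by $C|\nabla h|^2/|A|$ via the Cauchy–Schwarz inequality $|\nabla H|^2 \leq n|\nabla h|^2$ and $|h|_{\mathrm{op}} \leq |A|$, to be absorbed against the good term $-2\alpha\Phi H^{-1}|\nabla h|^2/|A|^2$. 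Combining all terms yields $0 \leq LW \leq -c|A|^2 + C$ at $(x_0,t_0)$, whence $|A|^2(x_0,t_0) \leq C$ and therefore $|A|^2 \leq C$ everywhere.

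The main obstacle is the gradient term $|\nabla h|^2/|A|$, which is only directly dominated by $-|\nabla h|^2/|A|^2$ when $|A|$ is already controlled — a potentially circular situation. The resolution proceeds by a Young-inequality splitting that converts the deficit into a lower-order $|A|^2$ contribution absorbed by $c_0|A|^2$, together with the good $|\nabla u|^2/(u-a)^2$ cross term from the $(N^2 - N)$ coefficient. Alternatively, one may work with the evolution of the largest eigenvalue $\lambda_{\max}$ from \eqref{18}, where the $1$-convexity $H > 0$ combined with $H$ bounded above (from $\Phi \leq C$) gives $\kappa_{\min} \geq -(n-1)\lambda_{\max}$, converting any bound on $\lambda_{\max}$ into a bound on $|A|^2 \leq n(n-1)^2 \lambda_{\max}^2$; this variant avoids the Kato-type gradient issue at the cost of a smoothness argument for the largest eigenvalue.
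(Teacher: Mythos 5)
Your test function $W=\log|A|^2-N\log(u-a)$ differs from the paper's, which uses $Q=\log|A|^2-2B\log(\Phi-a)$ with $B\in(1/2,1)$, and this difference is not cosmetic: it is precisely what is needed to handle the bad gradient term $2\alpha(\alpha-1)\Phi H^{-2}(\nabla^jH)(\nabla_iH)h_j{}^i/|A|^2$, which you correctly identify as the main obstacle but do not actually resolve. At the maximum of the paper's $Q$, the gradient condition $\nabla|A|^2/|A|^2=2B\nabla\Phi/(\Phi-a)$ together with a Kato-type Cauchy--Schwarz step produces a coefficient $(4B^2-6B+2)|\nabla\Phi|^2/(\Phi-a)^2$ that is \emph{strictly negative} for $B\in(1/2,1)$; since $\Phi=\lambda^\beta f(\lambda^{-1}r)H^\alpha$, one has $|\nabla\Phi|^2\geq C|\nabla H|^2-C$, so this negative term is of order $-|\nabla H|^2$ and dominates $C|\nabla H|^2/|A|$ once $|A|$ is large. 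Nothing analogous is available with $u$: your $(N^2-N)$ coefficient with $N>2$ is \emph{positive}, so the term $(N^2-N)\alpha\Phi H^{-1}|\nabla u|^2/(u-a)^2$ works against you rather than for you (and since $|\nabla u|\leq C|A|$, it is itself potentially of order $|A|^2$, undermining the coercive $-c_0|A|^2$ term). Even at the critical point, $\nabla u$ is linked to $\nabla|A|^2$ via the gradient condition, not to $\nabla H$, so it cannot be used to cancel the $(\alpha-1)$ gradient term.

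The proposed ``Young-inequality splitting that converts the deficit into a lower-order $|A|^2$ contribution'' does not go through: the deficit $C|\nabla h|^2/|A|-C'|\nabla h|^2/|A|^2$ involves an uncontrolled $|\nabla h|$ with no a priori relation to $|A|$, and no Young splitting turns $|\nabla h|^2/|A|$ into $\epsilon|A|^2+C$ without a bound like $|\nabla h|\lesssim|A|^{3/2}$, which is not available here. The alternative via the largest eigenvalue inherits the same $(\alpha-1)|\nabla H|^2$ issue and is not developed. Your argument does close the case $\alpha=1$ (where the $(\alpha-1)$ coefficient vanishes and the only question is whether $N$ can be chosen so that both the coercive condition and a nonpositive gradient coefficient hold simultaneously, which still needs care since for $N>2$ one has $\frac{2N-4}{N}>0$), but for general $\alpha>0$ the proof as written has a genuine gap. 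To repair it, replace $u$ with $\Phi$ in the test function, as in the paper, so that the negative gradient coefficient feeds back into a $-|\nabla H|^2$ term.
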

\begin{proof}
Take $k=1$ in \eqref{20}, we have
 \begin{equation*}
\begin{split}
    L|A|^2=& 2\lambda^\beta f(\lambda^{-1}r) \alpha(\alpha - 1)H^{\alpha - 2} \n^jH\n_iH h_j{ }^i \\
        &-2\gamma |A|^2 - 2(\alpha-1) \Phi tr(W^3) +  2\lambda^\beta f(\lambda^{-1}r) \alpha H^{\alpha-1}|A|^4\\
         &+ 4\lambda^{\beta-1}f'(\lambda^{-1}r)\n_ir \n^jH^\alpha h_j{ }^i -2\lambda^\beta f(\lambda^{-1}r)  \alpha H^{\alpha-1}|\n A|^2\\
        & +2\lambda^{\beta-1}f'(\lambda^{-1}r)\n^j\n_ir h_j{ }^i H^\alpha+ 2\lambda^{\beta-2}f''(\lambda^{-1}r)\n^jr\n_ir H^\alpha h_j{ }^i.
    \end{split}
\end{equation*}
Take $k=1$ in Corollary \ref{cor1}, we have
    \begin{equation*}
    \begin{split}
       L \Phi = &\lambda^\beta f(\lambda^{-1}r)\Phi\alpha H^{\alpha-1}|A|^2 
        + (\beta-k\alpha)\gamma \Phi- \frac{f'(\lambda^{-1}r)}{\lambda f(\lambda^{-1}r)}\frac{\Phi^2 u}{r}.
    \end{split}
\end{equation*}
Define an auxiliary function $Q = \log |A|^2 - 2B\log(\Phi - a) $ where $a = \frac{\inf_{M \times [0,T)]} \Phi}{2}$ and $B$ is a constant to be determined, then
\begin{equation*}
    \begin{split}
        LQ = & \frac{L|A|^2}{|A|^2} + \frac{ \lambda^{\beta}f(\lambda^{-1}r)\alpha H^{\alpha-1}}{|A|^4}|\n |A|^2|^2-2B \frac{L\Phi}{\Phi-a} -2B \frac{\lambda^{\beta}f(\lambda^{-1}r) \alpha H^{\alpha-1}}{(\Phi - a)^2} |\n \Phi|^2\\
        =& 2\lambda^{\beta}f(\lambda^{-1}r) (1-\frac{1}{\alpha})H^{-\alpha} \frac{h_j{ }^i}{|A|^2}\n^jH^\alpha\n_iH^\alpha  -2\gamma  - 2(\alpha-1) \Phi \frac{tr(W^3)}{|A|^2}\\
        &+  2\lambda^{\beta}f(\lambda^{-1}r) \alpha H^{\alpha-1}|A|^2+ 4\lambda^{\beta-1}f'(\lambda^{-1}r)\frac{h_j{ }^i}{|A|^2}\n_ir \n^jH^\alpha  \\
        & +2\lambda^{\beta-1}f'(\lambda^{-1}r)\n^j\n_ir \frac{h_j{ }^i}{|A|^2}  H^\alpha+ 2\lambda^{\beta-2}f''(\lambda^{-1}r)\n^jr\n_ir \frac{h_j{ }^i}{|A|^2} H^\alpha \\
         &-2\frac{|\n A|^2}{|A|^2}\lambda^{\beta}f(\lambda^{-1}r) \alpha H^{\alpha-1} + \frac{\lambda^{\beta}f(\lambda^{-1}r) \alpha H^{\alpha-1}}{|A|^4}|\n |A|^2|^2\\
         &-2B\frac{\Phi}{\Phi-a}\lambda^{\beta}f(\lambda^{-1}r)\alpha H^{\alpha-1}|A|^2 
        - 2B\frac{\Phi}{\Phi-a}(\beta-k\alpha)\gamma \\ &+2B\frac{\Phi}{\Phi-a} \frac{f'(\lambda^{-1}r)}{\lambda f(\lambda^{-1}r)}\frac{\Phi u}{r}-2B \frac{\lambda^{\beta}f(\lambda^{-1}r) \alpha H^{\alpha-1}}{(\Phi - a)^2} |\n \Phi|^2
    \end{split}
\end{equation*}
In a view of Lemma \ref{lem4} and $\frac{|tr(W^3)|}{|A|^2} \leq C|A|$, we have
$$- 2(\alpha-1) \Phi \frac{tr(W^3)}{|A|^2} \leq C|A|$$
Notice that $\frac{\Phi}{\Phi - a} \geq \frac{\Phi_{\max}}{\Phi_{\max} - a} = \frac{c}{c-a}$ where $c = \sup _{M \times [0,T)} \Phi$. 

Use Lemmas \ref{lem 2}, \ref{lem4} and \eqref{10}, we have
$$(2-2B\frac{\Phi}{\Phi-a})\lambda^{\beta}f(\lambda^{-1}r)\alpha H^{\alpha-1}|A|^2 \leq -(2 \frac{c}{c-a}B -2) C|A|^2$$ 
if $2 \frac{c}{c-a}B -2 \geq 0$. Similarly, use \eqref{12}, we have
$$2\lambda^{\beta}f(\lambda^{-1}r) (1-\frac{1}{\alpha})H^{-\alpha} \frac{h_j{ }^i}{|A|^2}\n^jH^\alpha\n_iH^\alpha \leq C\frac{|\n H^\alpha|^2}{|A|},$$
$$- 2B\frac{\Phi}{\Phi-a}\gamma  +2B\frac{\Phi}{\Phi-a} \frac{f'(\lambda^{-1}r)}{\lambda f(\lambda^{-1}r)}\frac{\Phi u}{r} \leq C.$$
By standard computations,
$$
\nabla_j \nabla_i r=\frac{\delta_{i j}}{r}-\frac{u}{r} h_{i j}-\frac{\nabla_i r \nabla_j r}{r} .
$$
Hence by Lemma \ref{lem5} and Corollary \ref{cor3},
\begin{equation}\label{17}
    |\n^2 r| \leq C + C|A|.
\end{equation}
Therefore, use \eqref{11},\eqref{15} and Lemma \ref{lem4} we obtain
$$2\lambda^{\beta-1}f'(\lambda^{-1}r)\n^j\n_ir \frac{h_j{ }^i}{|A|^2}  H^\alpha \leq \frac{C}{|A|} + C,$$
$$ 2\lambda^{\beta-2}f''(\lambda^{-1}r)\n^jr\n_ir \frac{h_j{ }^i}{|A|^2} H^\alpha \leq \frac{C}{|A|},$$
$$ 4\lambda^{\beta-1}f'(\lambda^{-1}r)\frac{h_j{ }^i}{|A|^2}\n_ir \n^jH^\alpha  \leq C\frac{|\n H^\alpha|}{|A|}.$$
Taking the above estimates into account, we get
\begin{equation*}
    \begin{split}
        LQ  \leq&-(4B-2) C|A|^2 + C + \frac{C}{|A|}+C\frac{|\n H^\alpha|^2}{|A|}+ C\frac{|\n H^\alpha|}{|A|}  \\
         &+\lambda^{\beta}f(\lambda^{-1}r) \alpha H^{\alpha-1}(-2\frac{|\n A|^2}{|A|^2} + \frac{|\n |A|^2|^2}{|A|^4}-2B \frac{ |\n \Phi|^2}{(\Phi - a)^2}).
    \end{split}
\end{equation*}
By Cauchy's inequality,
$$\frac{\l \n |A|^2,\n \Phi\r}{\Phi - a} = 2(\n_lh_{ij})(h_{ij}\frac{\n_l\Phi}{\Phi-a}) \leq |\n A|^2 +|A|^2\frac{|\n \Phi|^2}{(\Phi - a)^2}.$$
Hence,
$$-2\frac{|\n A|^2}{|A|^2} + \frac{|\n |A|^2|^2}{|A|^4}-2B \frac{ |\n \Phi|^2}{(\Phi - a)^2} \leq  \frac{|\n |A|^2|^2}{|A|^4} - 2\frac{\l \n |A|^2,\n \Phi \r}{(\Phi-a)|A|^2}+(2-2B)\frac{ |\n \Phi|^2}{(\Phi - a)^2}.$$
Since $\frac{\n |A|^2}{|A|^2} = 2B \frac{\n \Phi}{\Phi - a}$ at the maximum point of $Q$, it follows that
\begin{equation*}
       \frac{|\n |A|^2|^2}{|A|^4} - 2\frac{\l \n |A|^2,\n \Phi \r}{(\Phi-a)|A|^2}+(2-2B)\frac{ |\n \Phi|^2}{(\Phi - a)^2}=(4B^2-6B+2)\frac{ |\n \Phi|^2}{(\Phi - a)^2}
\end{equation*}
Take $B \in (1-\frac{a}{c},1)$ such that $$\begin{cases}
    2B^2-3B+1< 0\\
     \frac{c}{c-a}B -1 > 0
\end{cases}$$

Hence, by \eqref{10}, Lemma \ref{lem3} and Lemma \ref{lem4},
\begin{equation*}
        LQ \leq -C|A|^2 + \frac{C}{|A|}+C\frac{|\n H|}{|A|}+C\frac{|\n H|^2}{|A|}+C-C|\n \Phi|^2.
\end{equation*}

Recall $\Phi = \lambda^{\beta}f(\lambda^{-1}r)$, we have
$$\n \Phi =\lambda^{\beta-1}f'(\lambda^{-1}r)\n r H^\alpha +  \lambda^{\beta}f(\lambda^{-1}r) \n H^\alpha.$$
Hence,
\begin{equation*}
    \begin{split}
        |\n \Phi|^2 = &[\lambda^{\beta-1}f'(\lambda^{-1}r)]^2 H^{2\alpha} +  [\lambda^{\beta}f(\lambda^{-1}r)]^2 |\n H^\alpha|^2\\
        &+2\l \lambda^{\beta-1}f'(\lambda^{-1}r)\n r H^\alpha , \lambda^{\beta}f(\lambda^{-1}r) \n H^\alpha\r.
    \end{split}
\end{equation*}
By \eqref{10}, \eqref{11} and Lemma \ref{lem3}, we have 
$$|\n \Phi|^2 \geq  C|\n H|^2-C|\n H| \geq C|\n H|^2 -C$$
where in the last inequality we used $2ab \leq \epsilon^2 a^2 + \frac{1}{\epsilon^2}b^2$.
Therefore,
\begin{equation*}
\begin{split}
     LQ \leq & -C|A|^2  -C|\n H|^2+ \frac{C}{|A|} +C\frac{|\n H|^2}{|A|}+C\frac{|\n H|}{|A|}+C\\
     \leq & -C|A|^2  -C|\n H|^2+ \frac{C}{|A|} +C\frac{|\n H|^2}{|A|}+C.
\end{split}
\end{equation*}
This implies $|A|^2 \leq C$ because if $|A|$ is sufficiently large we have $ LQ \leq 0$ which is a contradiction.
\end{proof}
\subsubsection{$2 \leq k \leq n$}
\begin{lemma}\label{lem6}
    If $\kappa(h_i{ }^j) \in \Gamma_k^+$, then
    $$\ddot{\sigma}_k^{pq,rs} \n_ih_{pq}\n_ih_{rs} \leq -\sigma_k(\frac{\n_i\sigma_k}{\sigma_k}-\frac{\n_iH}{H})((\frac{2-k}{k-1})\frac{\n_i\sigma_k}{\sigma_k}-(\frac{k}{k-1})\frac{\n_iH}{H})$$
\end{lemma}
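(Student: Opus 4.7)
My plan is to diagonalize $h$ pointwise, discard a manifestly nonpositive off-diagonal contribution using the $k$-convexity, and then recognize the remaining algebraic inequality as the concavity of the spectral function $F:=(\sigma_k/\sigma_1)^{1/(k-1)}$ applied in the direction $\eta_{pq}:=\n_ih_{pq}$.

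\textbf{Step 1 (diagonalization).} At the point of interest, choose a frame in which $h_i{}^j$ is diagonal with eigenvalues $\kappa_1,\dots,\kappa_n$. By the standard formula for the Hessian of a spectral function at a diagonal matrix, for any symmetric $\eta$,
\begin{equation*}
\ddot\sigma_k^{pq,rs}\eta_{pq}\eta_{rs}=\sum_{p\ne q}\sigma_{k-2}(\kappa|pq)\,\eta_{pp}\eta_{qq}+\sum_{p\ne q}\frac{\sigma_{k-1}(\kappa|p)-\sigma_{k-1}(\kappa|q)}{\kappa_p-\kappa_q}\,\eta_{pq}^{\,2}.
\end{equation*}
The identity $\sigma_{k-1}(\kappa|p)-\sigma_{k-1}(\kappa|q)=(\kappa_q-\kappa_p)\sigma_{k-2}(\kappa|pq)$ reduces the off-diagonal coefficient to $-\sigma_{k-2}(\kappa|pq)$. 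Since $\kappa\in\Gamma_k^+$ forces $\sigma_{k-2}(\kappa|pq)>0$, this off-diagonal sum is nonpositive and may be dropped. Applying this with $\eta=\n_ih$ and writing $a_p:=\n_ih_{pp}$ yields
\begin{equation*}
\ddot\sigma_k^{pq,rs}\n_ih_{pq}\n_ih_{rs}\le\sum_{p\ne q}\sigma_{k-2}(\kappa|pq)\,a_pa_q.
\end{equation*}

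\textbf{Step 2 (algebraic reduction).} Since $\n_i\sigma_k=\sum_p\sigma_{k-1}(\kappa|p)a_p$ and $\n_iH=\sum_pa_p$ depend only on the diagonal entries $a_p$, the right-hand side of the lemma is a quadratic form in $(a_1,\dots,a_n)$. A direct expansion of $-\sigma_k\bigl(\tfrac{\n_i\sigma_k}{\sigma_k}-\tfrac{\n_iH}{H}\bigr)\bigl(\tfrac{2-k}{k-1}\tfrac{\n_i\sigma_k}{\sigma_k}-\tfrac{k}{k-1}\tfrac{\n_iH}{H}\bigr)$ rewrites it as
\begin{equation*}
\frac{1}{k-1}\Bigl[(k-2)\frac{(\n_i\sigma_k)^2}{\sigma_k}+2\frac{\n_i\sigma_k\,\n_iH}{H}-k\frac{\sigma_k(\n_iH)^2}{H^2}\Bigr],
\end{equation*}
so the claim reduces to bounding $\sum_{p\ne q}\sigma_{k-2}(\kappa|pq)\,a_pa_q$ by this expression.

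\textbf{Step 3 (concavity of $F=(\sigma_k/\sigma_1)^{1/(k-1)}$).} A chain-rule computation, using that $H=\sigma_1$ is linear and therefore $\ddot H\equiv0$, shows that the inequality in Step 2 is precisely $\ddot F(\tilde\eta,\tilde\eta)\le 0$ for $\tilde\eta=\mathrm{diag}(a_1,\dots,a_n)$. The concavity of $F=(\sigma_k/\sigma_1)^{1/(k-1)}$ on $\Gamma_k^+$ is the classical Garding-type inequality for hyperbolic polynomials (a special case of the concavity of $(\sigma_k/\sigma_l)^{1/(k-l)}$ on $\Gamma_k^+$), which I would invoke directly, or re-derive from the concavity of $\sigma_k/\sigma_{k-1}$ by an induction on $k-l$.

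\textbf{Main obstacle.} The substantive step is Step 3: recognizing the peculiar linear combination of $\n\sigma_k/\sigma_k$ and $\n H/H$ on the right-hand side as the Hessian of a specific concave spectral function. Once $(\sigma_k/\sigma_1)^{1/(k-1)}$ is identified as the correct candidate, the remaining chain-rule verification and the appeal to the Garding-type concavity are essentially bookkeeping. As a backup I would attempt a direct sum-of-squares decomposition of $(\text{RHS}-\text{LHS})$ along the lines of $\sum_{p,q}c_{pq}(\kappa_pa_q-\kappa_qa_p)^2\ge0$ (which works transparently when $k=2$ via $\sum_p(a_p-\kappa_p\n_iH/H)^2\ge0$), but this combinatorial route appears considerably more involved for general $k$.
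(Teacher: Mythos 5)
Your proof is correct, and it fills in the argument that the paper disposes of with a single citation to Guan--Li--Li \cite{guan2012hypersurfaces}. Each of your three steps checks out. In Step~1, the Hessian formula for $\sigma_k$ at a diagonal matrix is the standard one, the off-diagonal coefficient simplifies to $-\sigma_{k-2}(\kappa|pq)$ exactly as you say, and $(\kappa|pq)\in\Gamma_{k-2}^+$ whenever $\kappa\in\Gamma_k^+$, so the discarded sum is indeed nonpositive. In Step~2 the expansion of $-\sigma_k\bigl(\tfrac{\n_i\sigma_k}{\sigma_k}-\tfrac{\n_iH}{H}\bigr)\bigl(\tfrac{2-k}{k-1}\tfrac{\n_i\sigma_k}{\sigma_k}-\tfrac{k}{k-1}\tfrac{\n_iH}{H}\bigr)$ into $\tfrac{1}{k-1}\bigl[(k-2)\tfrac{(\n_i\sigma_k)^2}{\sigma_k}+2\tfrac{\n_i\sigma_k\,\n_iH}{H}-k\sigma_k\tfrac{(\n_iH)^2}{H^2}\bigr]$ is arithmetic and correct. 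In Step~3, writing $F=(\sigma_k/H)^{1/(k-1)}=e^G$ with $G=\tfrac{1}{k-1}(\log\sigma_k-\log H)$ and using $\ddot H\equiv 0$, the condition $\ddot F(\tilde\eta,\tilde\eta)\le0$ multiplied through by $(k-1)\sigma_k>0$ rearranges exactly to the inequality of Step~2, so the lemma reduces to the classical concavity of $(\sigma_k/\sigma_1)^{1/(k-1)}$ on $\Gamma_k^+$. This is precisely the route taken in the cited reference, so your argument is the paper's argument, just written out. One small remark: the appeal to concavity of $F$ here is only used along the diagonal direction $\tilde\eta=\mathrm{diag}(a_1,\dots,a_n)$, i.e.\ one only needs concavity of $F$ as a function of the eigenvalue vector $\kappa$, not the stronger matrix concavity of the associated spectral function; Step~1 has already dealt with the off-diagonal part of $\ddot\sigma_k$, which is why you never need to inspect the off-diagonal Hessian of $F$ itself. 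Your sum-of-squares check for $k=2$ is also correct and a useful sanity test.
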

\begin{proof}
    See \cite{guan2012hypersurfaces}.
\end{proof}
\begin{lemma}
    Along the normalized flow, the curvatures have a uniform bound, i.e.
    $$|A| \leq C$$
\end{lemma}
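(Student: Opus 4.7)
The plan is to run a maximum principle argument on a scalar curvature quantity, following the template used in the $k=1$ case but with the largest principal curvature in place of $|A|^2$ so that Lemma~\ref{lem6} can be invoked on the inconvenient $\ddot\sigma_k^{pq,rs}$ terms. Concretely, I would consider an auxiliary function of the form
\begin{equation*}
Q = \log \kappa_{\max} - B\log(\Phi-a) + \tfrac{A}{2}|X|^{2},
\end{equation*}
where $\kappa_{\max}$ denotes the largest eigenvalue of $(h_i{}^j)$, $a=\tfrac{1}{2}\inf_{M\times[0,T)}\Phi>0$ by Lemma~\ref{lem 2}, and $A,B>0$ are constants to be chosen. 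The $|X|^{2}$ term is a standard device: since $r$ is bounded, it contributes a definitely-signed second-order term that will absorb the awkward $0$-order remainders coming from $\nabla^{2}r$ in \eqref{18}.

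At a maximum of $Q$, choose normal coordinates so that $(h_i{}^j)$ is diagonal with $h_n{}^n=\kappa_{\max}$, and treat $\kappa_{\max}=h_n{}^n$ as a smooth function. The first step is to compute $Lh_n{}^n$ from \eqref{18}: the main good term is
$-(k\alpha-1)\Phi (h_n{}^n)^{3}$, which is nonpositive exactly when $\alpha\geq 1/k$, covering both admissible ranges in the hypothesis. Divide by $\kappa_{\max}$ and subtract $B/(\Phi-a)$ times the evolution of $\Phi$ from Corollary~\ref{cor1} and $A$ times the evolution of $|X|^{2}$ (computed directly from \eqref{2}). Using the first-order critical-point identity $\nabla\log\kappa_{\max}=B\nabla\log(\Phi-a)-A\nabla(\tfrac12|X|^{2})$, the gradient terms $\nabla_i r\,\nabla^{j}\sigma_k^{\alpha}$ and $\nabla_i\sigma_k\nabla^{j}\sigma_k/\sigma_k^{2}$ in \eqref{18} become expressible through $\nabla\Phi/(\Phi-a)$ and $\nabla r$, which are bounded by the $C^{0}$ and $C^{1}$ estimates together with Corollary~\ref{cor 2}; the second derivatives of $r$ can be handled by the identity $\nabla_j\nabla_i r=\delta_{ij}/r-(u/r)h_{ij}-\nabla_ir\nabla_jr/r$ exactly as in \eqref{17}, producing at worst a linear-in-$\kappa_{\max}$ correction.

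The main obstacle is the concavity term
$\alpha\sigma_k^{\alpha-1}\ddot\sigma_k^{pq,st}\nabla_n h_{pq}\nabla_n h_{st}$,
which is not automatically nonpositive for $k\geq 2$. Here the two cases split. When $\alpha\geq 1$, I would combine Lemma~\ref{lem6} with the identity $\nabla_n\sigma_k/\sigma_k = $ (expressible via $\nabla Q$ and $\nabla\Phi$) to show that this term plus the quadratic gradient term $\alpha(\alpha-1)\sigma_k^{\alpha-2}|\nabla\sigma_k|^{2}$ is dominated by $C(\kappa_{\max})^{-1}|\nabla\Phi|^{2}$ modulo terms absorbed by choosing $B$ close to but strictly less than $1$ (as in the $k=1$ case), together with a positive multiple of $|\nabla h_n{}^n|^{2}/\kappa_{\max}^{2}$ coming from the Cauchy--Schwarz trick
\begin{equation*}
\frac{\langle \nabla\kappa_{\max},\nabla\Phi\rangle}{\Phi-a} \leq \frac{|\nabla h_n{}^n|^{2}}{\kappa_{\max}}+\kappa_{\max}\frac{|\nabla\Phi|^{2}}{(\Phi-a)^{2}}.
\end{equation*}
In the exceptional case $\alpha=1/k$, the function $\sigma_k^{1/k}$ is concave on $\Gamma_k^{+}$, so $\alpha\sigma_k^{\alpha-1}\ddot\sigma_k^{pq,st}+\alpha(\alpha-1)\sigma_k^{\alpha-2}\dot\sigma_k^{pq}\dot\sigma_k^{st}\le 0$ as a tensor inequality, and the whole Hessian contribution is nonpositive without needing Lemma~\ref{lem6}. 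Either way, after these cancellations one is left with an inequality of the form
\begin{equation*}
LQ \leq -c_{1}\kappa_{\max}+c_{2}+c_{3}\kappa_{\max}^{-1}|\nabla h_n{}^n|^{2}-c_{4}\Phi\dot\sigma_k^{ii},
\end{equation*}
plus negligible terms. Choosing $A$ large enough to dominate the linear-in-$\kappa_{\max}$ error from $A|X|^{2}$ and $B\in(1-a/c,1)$ exactly as in the $k=1$ case forces $\kappa_{\max}\leq C$ at the maximum of $Q$, which gives $|A|\leq C\kappa_{\max}\leq C$ everywhere. The delicate point, and where most of the effort would go, is verifying that the constants in Lemma~\ref{lem6} can be chosen uniformly in the admissible range of $\alpha$ so that the gradient of $h_n{}^n$ is genuinely absorbed rather than merely bounded.
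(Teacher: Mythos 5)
Your proposal follows the same broad strategy as the paper (an auxiliary function involving $\log(\Phi-a)$, Lemma~\ref{lem6} for the concavity term, the sign restrictions on $\alpha$), but the execution differs in ways that matter. The paper's key structural observation for $k\geq 2$ is that $k$-convexity forces $\sigma_2>0$, hence $H^2 = 2\sigma_2 + |A|^2 > |A|^2$, so it suffices to bound the \emph{smooth} scalar $H$. The paper then takes the simple ratio $Q = H/(\Phi-a)$: at a maximum $\nabla Q = 0$ gives the exact identity $\nabla H/H = \nabla\Phi/(\Phi-a)$ with no leftover $|\nabla h|^2$ term to absorb, no Cauchy--Schwarz juggling, and no extra $|X|^2$ term. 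The paper then packages the remaining quadratic-in-$\nabla\sigma_k^\alpha$ coefficient as a quadratic polynomial $p(\Phi/(\Phi-a))$ whose sign is controlled by $\alpha\geq 1$ or $\alpha = 1/k$ in one stroke, and uses $\dot\sigma_k^{pq}h_p{}^sh_{sq}\geq \frac{k}{n}\sigma_k\kappa_1 \geq CH$ to produce $LQ\leq -CH^2 + C$.

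Two concrete issues with your version. First, you close the argument with ``$|A|\leq C\kappa_{\max}\leq C$,'' but this inequality is \emph{not} automatic: a large negative principal curvature could make $|A|$ large even when $\kappa_{\max}$ is controlled. What saves you is precisely the $\sigma_2>0$ fact ($|A|<H\leq n\kappa_{\max}$), which you never state; without it the last step fails, and with it you might as well have bounded $H$ directly (as the paper does), avoiding the nonsmoothness of $\kappa_{\max}$ at eigenvalue crossings. Second, a small but confusing slip: in \eqref{18} the term is $-(k\alpha-1)\Phi\,h_i{}^sh_s{}^j$, which for $i=j=n$ in diagonal coordinates gives $-(k\alpha-1)\Phi\kappa_{\max}^2$ in $Lh_n{}^n$, not $\kappa_{\max}^3$; after dividing by $\kappa_{\max}$ it is linear, consistent with the linear-in-$\kappa_{\max}$ final inequality you write down. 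Your two-case treatment of the concavity term (Lemma~\ref{lem6} for $\alpha\geq 1$, direct concavity of $\sigma_k^{1/k}$ for $\alpha = 1/k$) is correct in spirit, but the paper's unified quadratic $p(\Phi/(\Phi-a))$ is the place where the admissible $\alpha$-ranges actually enter, and your sketch leaves that computation implicit. The $\tfrac{A}{2}|X|^2$ term is harmless but unnecessary: the paper handles $\nabla^2 r$ via \eqref{17} with no stabilizer.
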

\begin{proof}
    Since $\kappa(h_i{ }^j) \in \Gamma_k^+$ and $k \geq 2$, it follows $\sigma_2 > 0$ and hence 
    
    $$H^2 = 2\sigma_2 + |A|^2 > |A|^2.$$

    Define $Q = \frac{H}{\Phi - a}$ where $a = \frac{1}{2} \inf_{M \times [0,T)} \Phi$, then
    \begin{equation*}
        LQ = \frac{LH}{\Phi - a} - \frac{H L \Phi}{(\Phi - a)^2} + 2\frac{1}{\Phi - a}\lambda^\beta f(\lambda^{-1}r)\alpha \sigma_k^{\alpha-1}\dot{\sigma}_k^{ij}\n_i \Phi\n_iQ.
    \end{equation*}
    At the maximum point of $Q$ we have $\n Q = 0$. Therefore,
     \begin{equation}\label{16}
     \begin{split}
         LQ = & \frac{LH}{\Phi - a} - \frac{H L \Phi}{(\Phi - a)^2} \\
         =& \lambda^\beta f(\lambda^{-1}r) \alpha(\alpha - 1)\frac{\sigma_k^{\alpha - 2}}{\Phi - a} |\n \sigma_k|^2 +\lambda^\beta f(\lambda^{-1}r) \alpha \frac{\sigma_k^{\alpha - 1}}{\Phi-a}\ddot{\sigma}_k^{pq,st}\n_ih_{pq}\n^ih_{st}\\
         &-\gamma \frac{H}{\Phi-a} - (k\alpha-1) \frac{\Phi}{\Phi-a} |A|^2 +  \lambda^\beta f(\lambda^{-1}r) \alpha \sigma_k^{\alpha-1}\dot{\sigma}_k^{pq}h_{p}{ }^sh_{sq}\frac{H}{\Phi-a}\\
         &+ 2\lambda^{\beta-1} f'(\lambda^{-1}r)\frac{1}{\Phi-a}\l \n r, \n \sigma_k^\alpha \r  +\lambda^{\beta-1} f'(\lambda^{-1}r)\Delta r \frac{\sigma_k^\alpha}{\Phi-a}\\
         &+ \lambda^{\beta-2} f''(\lambda^{-1}r)|\n r|^2\frac{\sigma_k^\alpha}{\Phi-a}-\frac{\Phi}{\Phi-a}\lambda^\beta f(\lambda^{-1}r) \alpha \sigma_k^{\alpha-1}\dot{\sigma}_k^{pq}h_{p}{ }^sh_{sq}\frac{H}{\Phi-a}\\
         &-\gamma(\beta-k\alpha) \frac{H\Phi}{(\Phi-a)^2} + \frac{f'(\lambda^{-1}r)}{\lambda f(\lambda^{-1}r)}\frac{H \Phi^2 u}{r(\Phi-a)^2}.
     \end{split}
    \end{equation}
Since $\n \sigma_k^\alpha = \alpha \sigma_k^{\alpha-1} \n \sigma_k$, we have
$$\lambda^\beta f(\lambda^{-1}r) \alpha(\alpha - 1)\frac{\sigma_k^{\alpha - 2}}{\Phi - a} |\n \sigma_k|^2 = \lambda^\beta f(\lambda^{-1}r) \frac{\alpha - 1}{(\Phi - a)\sigma_k^\alpha \alpha}|\n \sigma_k^\alpha|^2. $$
By Lemma \ref{lem6}, 
\begin{equation*}
    \begin{split}
    \ddot{\sigma}_k^{pq,rs} \n_ih_{pq}\n_ih_{rs} \leq & -\sigma_k(\frac{\n_i\sigma_k}{\sigma_k}-\frac{\n_iH}{H})((\frac{2-k}{k-1})\frac{\n_i\sigma_k}{\sigma_k}-(\frac{k}{k-1})\frac{\n_iH}{H})\\
        =&-\sigma_k[(\frac{2-k}{k-1})\frac{|\n \sigma_k|^2}{\sigma_k^2} - \frac{2}{k-1}\frac{\l \n \sigma_k,\n H \r}{H \sigma_k} + \frac{k}{k-1}\frac{|\n H|^2}{H^2}].
    \end{split}
\end{equation*}
Therefore,
\begin{equation*}
    \begin{split}
        &\lambda^\beta f(\lambda^{-1}r) \alpha \frac{\sigma_k^{\alpha - 1}}{\Phi-a}\ddot{\sigma}_k^{pq,st}\n_ih_{pq}\n^ih_{st}\\
        \leq & -\lambda^\beta f(\lambda^{-1}r) \alpha \frac{\sigma_k^{\alpha }}{\Phi-a}[(\frac{2-k}{k-1})\frac{|\n \sigma_k|^2}{\sigma_k^2} - \frac{2}{k-1}\frac{\l \n \sigma_k,\n H \r}{H \sigma_k} + \frac{k}{k-1}\frac{|\n H|^2}{H^2}]\\
        =&-\lambda^\beta f(\lambda^{-1}r)  \frac{1}{\Phi-a}[(\frac{2-k}{k-1})\frac{|\n \sigma_k^\alpha|^2}{\sigma_k^\alpha \alpha} - \frac{2}{k-1}\frac{\l \n \sigma^\alpha_k,\n H \r}{H} + \frac{k}{k-1}\alpha \sigma_k^\alpha\frac{|\n H|^2}{H^2}].
    \end{split}
\end{equation*}
Hence the first two terms in \eqref{16} can be bounded as follows
\begin{equation*}
    \begin{split}
        &\lambda^\beta f(\lambda^{-1}r) \alpha(\alpha - 1)\frac{\sigma_k^{\alpha - 2}}{\Phi - a} |\n \sigma_k|^2 
        +\lambda^\beta f(\lambda^{-1}r) \alpha \frac{\sigma_k^{\alpha - 1}}{\Phi-a}\ddot{\sigma}_k^{pq,st}\n_ih_{pq}\n^ih_{st}\\
        \leq& \frac{\lambda^\beta f(\lambda^{-1}r)}{\Phi-a}[(1-\frac{1}{\alpha(k-1)})\frac{|\n \sigma_k^\alpha|^2}{\sigma_k^\alpha} + \frac{2}{k-1}\frac{\l\n \sigma_k^\alpha,\n H\r}{H} - \frac{k\alpha}{k-1} \sigma_k^\alpha\frac{|\n H|^2}{H^2}].
    \end{split}
\end{equation*}
Since $\n Q = 0$, it follows $\frac{\n H}{H} = \frac{\n \Phi}{\Phi-a}$. Recall $\Phi = \lambda^\beta f(\lambda^{-1}r) \sigma_k^\alpha$ and hence 
$$\n \Phi = \lambda^\beta f(\lambda^{-1}r) \n\sigma_k^\alpha + \lambda^{\beta-1} f'(\lambda^{-1}r) \sigma_k^\alpha\n r.$$
By \eqref{11}, \eqref{12} we conclude,
\begin{equation*}
    \begin{split}
        &\lambda^\beta f(\lambda^{-1}r) \alpha(\alpha - 1)\frac{\sigma_k^{\alpha - 2}}{\Phi - a} |\n \sigma_k|^2 
        +\lambda^\beta f(\lambda^{-1}r) \alpha \frac{\sigma_k^{\alpha - 1}}{\Phi-a}\ddot{\sigma}_k^{pq,st}\n_ih_{pq}\n^ih_{st}\\
        \leq& \frac{\lambda^\beta f(\lambda^{-1}r)}{\Phi-a}(1-\frac{1}{\alpha(k-1)} + \frac{2}{k-1}\frac{\Phi}{\Phi-a} - \frac{k\alpha}{k-1}\frac{\Phi^2}{(\Phi-a)^2})\frac{|\n \sigma_k^\alpha|^2}{\sigma_k^\alpha} +C |\n \sigma_k^\alpha|+C.
    \end{split}
\end{equation*}
View 
$$ 1-\frac{1}{\alpha(k-1)} + \frac{2}{k-1}\frac{\Phi}{\Phi-a} - \frac{k\alpha}{k-1}\frac{\Phi^2}{(\Phi-a)^2} = p(\frac{\Phi}{\Phi - a})$$ 
as a quadratic function about $\frac{\Phi}{\Phi - a}$. The axis of symmetry is $\frac{1}{k\alpha}$ and $1 <\frac{c}{c-a}\leq  \frac{\Phi}{\Phi - a} \leq 2$. Since $\frac{1}{k\alpha} \leq 1$, it follows
$$p(\frac{\Phi}{\Phi - a})\leq p(\frac{c}{c-a}) < p(1) = \frac{(k\alpha-1)(1-\alpha)}{\alpha(k-1)} \leq 0$$
by our assumption $\alpha \geq 1$ or $\alpha = \frac{1}{k}$.

Therefore,
\begin{equation*}
    \begin{split}
        &\lambda^\beta f(\lambda^{-1}r) \alpha(\alpha - 1)\frac{\sigma_k^{\alpha - 2}}{\Phi - a} |\n \sigma_k|^2 
        +\lambda^\beta f(\lambda^{-1}r) \alpha \frac{\sigma_k^{\alpha - 1}}{\Phi-a}\ddot{\sigma}_k^{pq,st}\n_ih_{pq}\n^ih_{st}\\
        \leq& -C|\n \sigma_k^\alpha|^2 +C |\n \sigma_k^\alpha|+C
    \end{split}
\end{equation*}
By \eqref{11}, \eqref{12}, \eqref{15}, \eqref{17}, Lemmas \ref{lem 2}, \ref{lem4} and $|\n r| = 1$,
$$-\frac{a}{\Phi - a}\lambda^\beta f(\lambda^{-1}r) \alpha \sigma_k^{\alpha-1}\dot{\sigma}_k^{pq}h_p{ }^sh_{sq}\frac{H}{\Phi-a}\leq -C\dot{\sigma}_k^{pq}h_p{ }^sh_{sq}H,$$
$$2\lambda^{\beta-1} f'(\lambda^{-1}r)\frac{1}{\Phi-a}\l \n r, \n \sigma_k^\alpha \r \leq C|\n \sigma_k^\alpha|,$$
$$\lambda^{\beta-1} f'(\lambda^{-1}r)\Delta r \frac{\sigma_k^\alpha}{\Phi-a} \leq C + C|A| \leq C + CH,$$
$$\lambda^{\beta-2} f''(\lambda^{-1}r)|\n r|^2\frac{\sigma_k^\alpha}{\Phi-a} \leq C,$$
$$\frac{f'(\lambda^{-1}r)}{\lambda f(\lambda^{-1}r)}\frac{H \Phi^2 u}{r(\Phi-a)^2} \leq CH.$$
By our assumption on $\alpha$, we have $k\alpha - 1 \geq 0$ and hence $-(k\alpha - 1)|A|^2 \leq 0$.

Putting the above facts into \eqref{16}, we have
\begin{equation*}
        LQ \leq -C\dot{\sigma}_k^{pq}h_p{ }^sh_{sq}H-C|\n \sigma_k^\alpha|^2 + C|\n \sigma_k^\alpha| + CH + C.
\end{equation*}
Suppose the principle curvatures are arranged as $\kappa_1 \geq \cdots \geq \kappa_n$, we have $\sigma_{k-1} (\kappa|\kappa_1)\kappa_1 \geq \frac{k}{n}\sigma_k$ (see \cite{wang2009k}). Hence at a frame such that $\{h_i{ }^j\}$ is diagonal, we have
$$\dot{\sigma}_k^{pq}h_p{ }^sh_{sq} = \sigma_{k-1}(\kappa|\kappa_i)\kappa_i^2 \geq \sigma_{k-1} (\kappa|\kappa_1)\kappa_1^2 \geq \frac{k}{n}\sigma_k \kappa_1 \geq CH.$$
Therefore,
\begin{equation*}
\begin{split}
        LQ \leq & -C\dot{\sigma}_k^{pq}h_p{ }^sh_{sq}H-C|\n \sigma_k^\alpha|^2 + C|\n \sigma_k^\alpha| + CH + C\\
        \leq & -CH^2-C|\n \sigma_k^\alpha|^2 + C|\n \sigma_k^\alpha| + CH + C\\
        \leq & -CH^2 + C.
\end{split}
\end{equation*}
This implies that $Q$ has a uniform upper bound.
\end{proof}
By $C^0$ and $C^1$ estimates, we have 
$$0 < \frac{1}{C} \leq e^{(\beta-1)\phi}\rho + \frac{\rho}{r}\lambda^{\beta}g(\lambda^{-1}r) \leq C.$$
Combine this with $C^2$ estimate, it follows the equation (\ref{3}) is uniformly parabolic.

Hence by the standard parabolic equation theory (see \cite{krylov1987nonlinear}) we derive the long time existence of the equation (\ref{3}).
\section{Exponential convergence to a sphere}
\begin{lemma}
    Let $\phi$ be a solution to the flow (\ref{3}), then
    $$|\n \phi| \leq Ce^{-ct}$$
    for some positive constant $C,c > 0$.
\end{lemma}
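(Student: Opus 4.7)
The plan is to revisit the evolution inequality \eqref{8} derived in Section 4 and sharpen it, at a spatial maximum of $|\n\phi|^2$, to a strict decay inequality of the form $\p_t(\tfrac12|\n\phi|^2) \leq -c|\n\phi|^2$ with uniform $c>0$; Hamilton's trick then produces the exponential bound $|\n\phi|^2_{\max}(t)\leq |\n\phi|^2_{\max}(0)\,e^{-2ct}$, from which the stated conclusion follows.

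Concretely, at a maximum point $\theta_0$ of $|\n\phi|^2(\cdot,t)$, I would combine the two structural factors in \eqref{8} via $e^{\beta\phi} + \lambda^\beta g(\lambda^{-1}r) = \lambda^\beta f(\lambda^{-1}r)$ to write
\begin{equation*}
\begin{split}
\p_t\bigl(\tfrac{1}{2}|\n\phi|^2\bigr)(\theta_0) \leq & -(\beta-k\alpha-1)\rho\sigma_k^\alpha e^{(\beta-1)\phi}|\n\phi|^2 \\
& +\alpha\sigma_k^{\alpha-1}\lambda^\beta f(\lambda^{-1}r)\dot\sigma_k^{pq}\Bigl(\bigl(\tfrac12|\n\phi|^2\bigr)_{pq}-\phi_{ip}\phi_{iq}+\phi_p\phi_q-\delta_{pq}|\n\phi|^2\Bigr).
\end{split}
\end{equation*}
At $\theta_0$ the Hessian $(\tfrac12|\n\phi|^2)_{pq}$ is negative semidefinite and $\phi_{ip}\phi_{iq}$ is a positive semidefinite Gram matrix, so pairing them against the positive definite $\dot\sigma_k^{pq}$ yields nonpositive contributions. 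In an orthonormal frame diagonalizing $\dot\sigma_k^{pq}$ with eigenvalues $0<\mu_1\leq\cdots\leq\mu_n$ and writing $\n\phi$ with components $\psi_p$, the zeroth-order piece obeys
\[
\dot\sigma_k^{pq}\bigl(\phi_p\phi_q-\delta_{pq}|\n\phi|^2\bigr)=\sum_p\mu_p\psi_p^2-\Bigl(\sum_p\mu_p\Bigr)|\n\phi|^2\leq -(n-1)\mu_1|\n\phi|^2.
\]

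To make this effective, I need a uniform lower bound $\mu_1\geq c_0>0$. In the principal frame of $h_i{}^j$ one has $\mu_p=\sigma_{k-1}(\kappa|\kappa_p)$, which is strictly positive on $\Gamma_k^+$. The uniform curvature bound of Section 5, together with the lower bound $\sigma_k\geq 1/C$ stemming from Lemma \ref{lem 2} and \eqref{10}, and the preservation of $k$-convexity, confine $\kappa(\cdot,t)$ to a fixed compact subset of $\Gamma_k^+$ -- Newton-MacLaurin forbids degeneration toward the boundary -- so continuity supplies the needed lower bound on $\mu_1$. Coupling this with the lower bound on $\lambda^\beta f(\lambda^{-1}r)$ from \eqref{10} and the two-sided control on $\sigma_k^{\alpha-1}$ (available in both regimes $\alpha\geq 1$ and $\alpha=1/k$), the preceding computation produces $\p_t(\tfrac12|\n\phi|^2)(\theta_0)\leq -c|\n\phi|^2(\theta_0)$; the leading term on the right of \eqref{8} is already nonpositive in Theorem \ref{thm1} and strictly negative in Theorem \ref{thm2}, so it only improves the constant.

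The main obstacle is verifying the uniform positivity of $\mu_1$, i.e.\ confirming that the principal curvatures stay in a compact subset of $\Gamma_k^+$. Once this compactness is established, the conclusion follows by a standard ODE comparison applied to $t\mapsto |\n\phi|^2_{\max}(t)$, and every remaining ingredient is already available from the $C^0$, $C^1$, and $C^2$ estimates of the preceding sections.
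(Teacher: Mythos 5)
Your proposal is correct and supplies precisely the details that the paper leaves implicit by citing \cite{li2022asymptotic}: drop the $\beta-k\alpha-1$ term (nonpositive), combine $e^{\beta\phi}+\lambda^\beta g(\lambda^{-1}r)=\lambda^\beta f(\lambda^{-1}r)\geq 1/C$ by \eqref{10}, discard the Hessian and Gram terms as favourably signed at the maximum, and extract strict decay from $\dot\sigma_k^{pq}(\phi_p\phi_q-\delta_{pq}|\n\phi|^2)\leq -(n-1)\mu_1|\n\phi|^2$, where the uniform lower bound $\mu_1\geq c_0>0$ follows because $|A|\leq C$, $\sigma_k\geq 1/C$ (Lemma \ref{lem 2} and \eqref{10}), and Maclaurin's inequality confine $\kappa$ to a compact subset of $\Gamma_k^+$ on which the smallest eigenvalue of $\dot\sigma_k$ is bounded below by continuity. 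This is the same route the cited reference takes; the step you single out as the ``main obstacle'' is indeed the crux, and your reduction of it to compactness of the principal-curvature range is the correct and standard way to close it.
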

\begin{proof}
    Use (\ref{8}) and recall $g \geq 0$, we conclude
    \begin{equation*}
    \begin{split}
        \p_t(\frac{1}{2}|\n\phi|^2) \leq & -(\beta - k\alpha - 1)\sigma_k^\alpha |\n \phi|^2 e^{(\beta-1)\phi}\\
        &  +\alpha \sigma_k^{\alpha-1}\dot{\sigma}_k^{pq}e^{\beta\phi}\left(\left(\frac{1}{2}|\n \phi|^2\right)_{pq}-\phi_{ip} \phi_{iq}+\phi_p \phi_q-\delta_{pq}|\n \phi|^2 \right)
    \end{split}
\end{equation*}
    From here we can use the same argument as \cite{li2022asymptotic}.
\end{proof}
Since $\n r$ converges to $0$ exponentially, it follows 
\begin{equation}\label{21}
    r_{\max}(t) - r_{\min}(t) \leq \pi |\n r| \leq Ce^{-ct}.
\end{equation}
\begin{lemma}\label{lem7}
    If $\beta > 1 + k\alpha$, then under the normalized flow \eqref{2} the sphere $\s^n(r_0)$ will converge to $\s^n(1)$  as $t \to \infty$.
\end{lemma}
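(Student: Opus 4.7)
The plan is to exploit the rotational symmetry of the normalized flow \eqref{2} about the origin: since a sphere centered at the origin must evolve through concentric spheres, the PDE reduces to a scalar ODE for the radius $r(t)$, which I will show converges to $1$.

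The velocity in \eqref{2} depends only on $r$, $\sigma_k$, $\nu$, and $X$, all of which transform equivariantly under rotations fixing the origin. By uniqueness of smooth solutions, $\s^n(r_0)$ evolves through $\s^n(r(t))$. On $\s^n(r)$ every principal curvature equals $1/r$, hence $\sigma_k^\alpha = (C_n^k)^\alpha r^{-k\alpha} = \gamma\, r^{-k\alpha}$ and $u = r$. Inserting these values into the scalar form \eqref{19} gives
\begin{equation*}
    \frac{dr}{dt} \;=\; \gamma r \;-\; \gamma r^{\beta - k\alpha} \;-\; \gamma\, \lambda^\beta g(\lambda^{-1} r)\, r^{-k\alpha}.
\end{equation*}

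Next I would show that the last term decays exponentially in $t$. In the new time variable $\lambda(t) = e^{\gamma t}$, and since $r$ is bounded by Lemma \ref{lem5}, $\lambda^{-1} r \to 0$. The vanishing condition $g'(0) = \cdots = g^{([\beta])}(0) = 0$ together with Taylor's formula yields $g(s) \leq C s^{[\beta]+1}$ near $0$, so
\begin{equation*}
    \lambda^\beta g(\lambda^{-1}r) \;\leq\; C\, \lambda^{\beta - [\beta] - 1}\, r^{[\beta]+1} \;\leq\; C e^{-c t},
\end{equation*}
with $c = ([\beta] + 1 - \beta)\gamma > 0$. Consequently the ODE is an exponentially small perturbation of
\begin{equation*}
    \frac{dr}{dt} \;=\; \gamma r\bigl(1 - r^{\beta - k\alpha - 1}\bigr),
\end{equation*}
whose right-hand side is positive on $(0,1)$, negative on $(1,\infty)$, and linearizes at $r = 1$ as $-\gamma(\beta - k\alpha - 1)(r - 1) + O\bigl((r-1)^2\bigr)$. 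Since $\beta - k\alpha - 1 > 0$ by hypothesis, $r = 1$ is an exponentially stable equilibrium.

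To conclude, I would construct explicit barriers of the form $r_\pm(t) = 1 \pm M e^{-\mu t}$ for some $M > 0$ and small $\mu \in \bigl(0,\, \min\{c,\, \gamma(\beta - k\alpha - 1)\}\bigr)$, and invoke the scalar comparison principle to sandwich $r(t)$ between $r_-(t)$ and $r_+(t)$ for $t$ large, thereby deducing exponential convergence $r(t) \to 1$. The main obstacle is ensuring that the perturbation, while exponentially small, does not prevent the orbit from entering a neighborhood of $r = 1$ on which the linearization dominates. This is handled by combining the global bounds from Lemma \ref{lem5} — which keep $r$ in a compact subinterval of $(0,\infty)$ — with the fact that on such an interval the coefficient $\gamma(\beta - k\alpha - 1)$ arising from the linearization is uniformly bounded below, so that after a finite time the correction term is absorbed by the restoring term of the reduced ODE.
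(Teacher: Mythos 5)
Your proposal is correct and follows the same strategy as the paper: reduce the spherical solution to a scalar ODE for $r(t)$, use the vanishing of $g$ and its first $[\beta]$ derivatives at $0$ together with $\lambda^{-1}r\to 0$ to bound the $g$-term by $C\lambda^{\beta-[\beta]-1}$, and then compare with unperturbed dynamics. The one place where the routes diverge is the final comparison step. The paper writes a two-sided differential inequality
\[
-(\gamma + C\lambda^{\beta-[\beta]-1})\,r^{\beta-k\alpha} + \gamma r \;\le\; \frac{dr}{dt} \;\le\; -\gamma r^{\beta-k\alpha} + \gamma r,
\]
recognizes both bounding equations as Bernoulli ODEs that can be solved in closed form, and reads off directly that both bounding solutions tend to $1$. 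You instead propose to linearize about $r=1$ and build explicit barriers $1 \pm M e^{-\mu t}$. That works, but it requires you to address carefully the preliminary stage you flag yourself: one must first argue (using the sign of $\gamma r(1-r^{\beta-k\alpha-1})$ away from $1$ and the exponential smallness of the perturbation) that $r(t)$ enters a fixed neighborhood of $1$ in finite time, before the linearization-based barriers apply. The explicit Bernoulli solutions sidestep this two-stage structure entirely, which is why the paper's version is shorter. Also note that Lemma \ref{lem7} itself only claims convergence, not an exponential rate; the exponential rate is supplied separately via the gradient decay and \eqref{21}, so your extra effort toward a rate, while not wrong, is not needed at this point.
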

\begin{proof}
     Suppose $\s^n(r(t))$ solves the normalized flow \eqref{2}, then $r$ must satisfy the following ODE
\begin{equation*}
    \frac{d}{dt}r = -\gamma r^{\beta-k\alpha} - \gamma \lambda^\beta g(\lambda^{-1}r)\frac{1}{r^{k\alpha}} + \gamma r.
\end{equation*}
Since $g \geq 0$, $g(0) = \cdots = g^{([\beta])}(0) = 0$ and $r \leq C < \infty$ by the $C^0$ estimate, it follows that
$$0 \leq \gamma \lambda^\beta g(\lambda^{-1}r)\frac{1}{r^{k\alpha}} \leq \gamma C \lambda^{\beta-[\beta]-1}r^{[\beta]+1-k\alpha} \leq C\lambda^{\beta-[\beta]-1}r^{\beta-k\alpha}.$$
Therefore,
$$-(\gamma + C\lambda^{\beta-[\beta]-1})r^{\beta-k\alpha} + \gamma r \leq \frac{d r}{dt} \leq -\gamma r^{\beta-k\alpha}  + \gamma r.$$
Let $r_1(t)$ and $r_2(t)$ be the solutions to the following equations respectively
\begin{equation*}
    \begin{cases}
        \frac{d r_1}{dt} = -(\gamma + C\lambda^{\beta-[\beta]-1})r_1^{\beta-k\alpha} + \gamma r_1,\\
        r_1(0) = r_0,
    \end{cases}
\end{equation*}
\begin{equation*}
    \begin{cases}
        \frac{d r_2}{dt} = -\gamma r_2^{\beta-k\alpha}  + \gamma r_2,\\
        r_2(0) = r_0.
    \end{cases}
\end{equation*}
Recall $\lambda(t) = e^{\gamma t}$. Then $r_1(t)$ and $r_2(t)$ are given by
\begin{equation*}
r_1(t) = 
\begin{cases}
\begin{aligned}
&\Bigg[ \bigg( r_0^{1+k\alpha-\beta} - 1 - \frac{C(1+k\alpha-\beta)}{(\beta-[\beta]-1)\gamma - (1+k\alpha-\beta)\gamma} \bigg) e^{(1+k\alpha-\beta)\gamma t} \\
&\quad + 1 + \frac{C(1+k\alpha-\beta)}{(\beta-[\beta]-1)\gamma - (1+k\alpha-\beta)\gamma} e^{(\beta-[\beta]-1)\gamma t} \Bigg]^{\frac{1}{1+k\alpha-\beta}}, \\
&\qquad \text{if } \beta-[\beta]-1 \neq 1 + k\alpha - \beta, \\[2ex]
&\big[ (r_0^{1+k\alpha-\beta} - 1 - (1+k\alpha-\beta)\gamma t ) e^{(1+k\alpha-\beta)\gamma t} + 1 \big]^{\frac{1}{1+k\alpha - \beta}}, \\
&\qquad \text{if } \beta-[\beta]-1 = 1 + k\alpha - \beta,
\end{aligned}
\end{cases}
\end{equation*}
\begin{equation*}
    r_2(t) = [1+(r_0^{1+k\alpha-\beta}-1)e^{(1+k\alpha-\beta)\gamma t}]^{\frac{1}{1+k\alpha - \beta}} 
\end{equation*}
Notice that $r_i(t) \to 1$ as $t \to \infty$ for $i = 1,2$. By ODE comparison, we have $r_1(t) \leq r(t) \leq r_2(t)$ and hence $r(t) \to 1$ as $t \to \infty$.
\end{proof}
When $\beta > 1 + k\alpha$, let $\s^n(r_1)$ and $\s^n(r_2)$ be two spheres such that $\s^n(r_1) \subset \Omega_0 \subset \s^n(r_2)$ where $\Omega_0$ is the domain bounded by the initial hypersurface $M_0$. By comparison principle and Lemma \ref{lem7}, we conclude $r(t) \to 1$ as $t \to \infty$. By \eqref{21}, $r(t)$ converges $1$ exponentially.

When $\beta = 1+k\alpha$, recall in the proof of $C^0$ estimate we show that $r_{\max}(t)$ is decreasing and hence we can assume $r_{\max}(t) \to r_0$ as $t \to \infty$. Therefore, by \eqref{21} we have $r_{\min}(t) \to r_0$. Moreover,
$$\max\{r_{\max}(t) - r_0,r_0 - r_{\min}(t)\} \leq r_{\max}(t)-r_{\min}(t) \leq Ce^{-ct}.$$
This implies $r(\cdot,t) \to r_0$ exponentially fast as $t \to \infty$.

We now prove the exponential decay of $C^k$ norms for all $k \geq 1$. In both cases $\beta > 1 + k\alpha$ and $\beta = 1 + k\alpha$, we already show that the function $r(t)$ converges to a limit denoted by $r_0$ as $t \to \infty$. Recall the interpolation inequality(see \cite{hamilton1982three})
\begin{equation}
    \int_{\s^n} |\n^m T|^2 \leq C_{m,n}(\int_{\s^n} |\n^l T|^2 )^{\frac{m}{l}}(\int_{\s^n} |T|^2 )^{1-\frac{m}{l}} 
\end{equation}
where $T$ is any smooth tensor field on $\s^n$ and $l,m$ are integers such that $0 \leq m\leq l$. Applying this to $r - r_0$ and recall all derivatives of $r$ are bounded independent of $t$, we have
$$\int_{\s^n} |\n^m r| \leq C_m e^{-c_m t}$$
By the Sobolev embedding theorem on $\s^n$(see \cite{aubin2012nonlinear}), we have
$$\Vert r-r_0 \Vert_{C^m(\s^n)} \leq C_{m,l}(\int_{\s^n} |\n^l r|^2 + \int_{\s^n} |r-r_0|^2)^{\frac{1}{2}}$$
for any $l> m + \frac{n}{2}$. Therefore, $\Vert r - r_0 \Vert_{C^m(\s^n)} \to 0$ exponentially fast as $t \to \infty$ and hence $r(\cdot,t)$ converges to $r_0$ in the $C^\infty$ topology.

{\bf Acknowledgements.}
Research of the first author was partially supported by National Key R$\&$D Program of China (No. 2022YFA1005500) and Natural Science Foundation of China under Grant No. 12031017.

\vspace{5mm}

\end{document}